\documentclass[12pt]{amsart}

\usepackage{amsfonts}

\usepackage{amssymb}
\usepackage{latexsym}
\usepackage{graphics}
\usepackage[2emode]{psfrag}
\usepackage{mathrsfs}
\usepackage{amsthm}
\usepackage{amsmath}
\usepackage[all]{xy}
\usepackage{enumerate}
\usepackage{hyperref}
\usepackage{stmaryrd}
\usepackage{fancyhdr}
\usepackage[table]{xcolor}
\usepackage{cite}
\usepackage{color}
\usepackage{tikz}
\usepackage{tqft}
\usetikzlibrary{tqft}
\usetikzlibrary{calc,tqft,decorations.markings}
\usetikzlibrary{arrows, decorations.markings}
\usetikzlibrary{calc, arrows, decorations.markings}
\newtheorem{mainthm}{Theorem}

\usepackage{freetikz}

\addtolength{\hoffset}{-2cm}
\addtolength{\textwidth}{4cm}
\addtolength{\voffset}{-1cm}
\addtolength{\textheight}{2cm}

\begin{document}

\def\joost{\color{blue}}

\newcommand{\thlabel}[1]{\label{th:#1}}
\newcommand{\thref}[1]{Theorem~\ref{th:#1}}
\newcommand{\selabel}[1]{\label{se:#1}}
\newcommand{\seref}[1]{Section~\ref{se:#1}}
\newcommand{\lelabel}[1]{\label{le:#1}}
\newcommand{\leref}[1]{Lemma~\ref{le:#1}}
\newcommand{\prlabel}[1]{\label{pr:#1}}
\newcommand{\prref}[1]{Proposition~\ref{pr:#1}}
\newcommand{\colabel}[1]{\label{co:#1}}
\newcommand{\coref}[1]{Corollary~\ref{co:#1}}
\newcommand{\relabel}[1]{\label{re:#1}}
\newcommand{\reref}[1]{Remark~\ref{re:#1}}
\newcommand{\exlabel}[1]{\label{ex:#1}}
\newcommand{\exref}[1]{Example~\ref{ex:#1}}
\newcommand{\delabel}[1]{\label{de:#1}}
\newcommand{\deref}[1]{Definition~\ref{de:#1}}
\newcommand{\eqlabel}[1]{\label{eq:#1}}
\newcommand{\equref}[1]{(\ref{eq:#1})}

\newcommand{\Hom}{{\sf Hom}}
\newcommand{\End}{{\sf End}}
\newcommand{\Ext}{{\sf Ext}}
\newcommand{\Fun}{{\sf Fun}}
\newcommand{\Mor}{{\sf Mor}\,}
\newcommand{\Aut}{{\sf Aut}\,}
\newcommand{\Ann}{{\sf Ann}\,}
\newcommand{\Ker}{{\sf Ker}\,}
\newcommand{\Coker}{{\sf Coker}\,}
\newcommand{\Cat}{{\sf Cat}\,}
\newcommand{\opCat}{{\sf opCat}\,}
\newcommand{\im}{{\sf Im}\,}
\newcommand{\coim}{{\sf Coim}\,}
\newcommand{\Trace}{{\sf Trace}\,}
\newcommand{\Frob}{{\sf Frob}\,}
\newcommand{\Char}{{\sf Char}\,}
\newcommand{\Mod}{{\sf Mod}}
\newcommand{\Vect}{{\sf Vect}}
\newcommand{\VectGr}{{\sf Vect-Grph}}
\newcommand{\Alg}{{\sf Alg}}
\newcommand{\Coalg}{{\sf Coalg}}
\newcommand{\Bialg}{{\sf Bialg}}
\newcommand{\Hopf}{{\sf Hopf}}
\newcommand{\Spec}{{\sf Spec}\,}
\newcommand{\Span}{{\sf Span}\,}
\newcommand{\sgn}{{\sf sgn}\,}
\newcommand{\Id}{{\sf Id}\,}
\newcommand{\Com}{{\sf Com}\,}
\newcommand{\codim}{{\sf codim}}
\newcommand{\Mat}{{\sf Mat}}
\newcommand{\Coint}{{\rm Coint}}
\newcommand{\Incoint}{{\sf Incoint}}
\newcommand{\can}{{\sf can}}
\newcommand{\Bim}{{\sf Bim}}
\newcommand{\CAT}{{\sf CAT}}
\newcommand{\Ob}{{\sf Ob}}
\newcommand{\sign}{{\sf sign}}
\newcommand{\kar}{{\sf kar}}
\newcommand{\rad}{{\sf rad}}
\newcommand{\Rat}{{\sf Rat}}
\newcommand{\Cob}{{\sf Cob}}
\newcommand{\hCob}{{\sf hCob}}
\newcommand{\ev}{{\sf ev}}
\newcommand{\coev}{{\sf coev}}
\newcommand{\sd}{{\sf d}}
\def\colim{{\sf colim}\,}
\def\tildej{\tilde{\jmath}}
\def\barj{\bar{\jmath}}

\def\Ab{\underline{\underline{\sf Ab}}}
\def\lan{\langle}
\def\ran{\rangle}
\def\ot{\otimes}
\def\bul{\bullet}
\def\ubul{\underline{\bullet}}

\def\id{\textrm{{\small 1}\normalsize\!\!1}}
\def\To{{\multimap\!\to}}
\def\bigperp{{\LARGE\textrm{$\perp$}}} 
\newcommand{\QED}{\hspace{\stretch{1}}
\makebox[0mm][r]{$\Box$}\\}

\def\AA{{\mathbb A}}
\def\BB{{\mathbb B}}
\def\CC{{\mathbb C}}
\def\DD{{\mathbb D}}
\def\EE{{\mathbb E}}
\def\FF{{\mathbb F}}
\def\GG{{\mathbb G}}
\def\HH{{\mathbb H}}
\def\II{{\mathbb I}}
\def\JJ{{\mathbb J}}
\def\KK{{\mathbb K}}
\def\LL{{\mathbb L}}
\def\MM{{\mathbb M}}
\def\NN{{\mathbb N}}
\def\OO{{\mathbb O}}
\def\PP{{\mathbb P}}
\def\QQ{{\mathbb Q}}
\def\RR{{\mathbb R}}
\def\TT{{\mathbb T}}
\def\UU{{\mathbb U}}
\def\VV{{\mathbb V}}
\def\WW{{\mathbb W}}
\def\XX{{\mathbb X}}
\def\YY{{\mathbb Y}}
\def\ZZ{{\mathbb Z}}

\def\aa{{\mathfrak A}}
\def\bb{{\mathfrak B}}
\def\cc{{\mathfrak C}}
\def\dd{{\mathfrak D}}
\def\ee{{\mathfrak E}}
\def\ff{{\mathfrak F}}
\def\gg{{\mathfrak G}}
\def\hh{{\mathfrak H}}
\def\ii{{\mathfrak I}}
\def\jj{{\mathfrak J}}
\def\kk{{\mathfrak K}}
\def\ll{{\mathfrak L}}
\def\mm{{\mathfrak M}}
\def\nn{{\mathfrak N}}
\def\oo{{\mathfrak O}}
\def\pp{{\mathfrak P}}
\def\qq{{\mathfrak Q}}
\def\rr{{\mathfrak R}}
\def\tt{{\mathfrak T}}
\def\uu{{\mathfrak U}}
\def\vv{{\mathfrak V}}
\def\ww{{\mathfrak W}}
\def\xx{{\mathfrak X}}
\def\yy{{\mathfrak Y}}
\def\zz{{\mathfrak Z}}

\def\aaa{{\mathfrak a}}
\def\bbb{{\mathfrak b}}
\def\ccc{{\mathfrak c}}
\def\ddd{{\mathfrak d}}
\def\eee{{\mathfrak e}}
\def\fff{{\mathfrak f}}
\def\ggg{{\mathfrak g}}
\def\hhh{{\mathfrak h}}
\def\iii{{\mathfrak i}}
\def\jjj{{\mathfrak j}}
\def\kkk{{\mathfrak k}}
\def\lll{{\mathfrak l}}
\def\mmm{{\mathfrak m}}
\def\nnn{{\mathfrak n}}
\def\ooo{{\mathfrak o}}
\def\ppp{{\mathfrak p}}
\def\qqq{{\mathfrak q}}
\def\rrr{{\mathfrak r}}
\def\sss{{\mathfrak s}}
\def\ttt{{\mathfrak t}}
\def\uuu{{\mathfrak u}}
\def\vvv{{\mathfrak v}}
\def\www{{\mathfrak w}}
\def\xxx{{\mathfrak x}}
\def\yyy{{\mathfrak y}}
\def\zzz{{\mathfrak z}}

\newcommand{\aA}{\mathscr{A}}
\newcommand{\bB}{\mathscr{B}}
\newcommand{\cC}{\mathscr{C}}
\newcommand{\dD}{\mathscr{D}}
\newcommand{\eE}{\mathscr{E}}
\newcommand{\fF}{\mathscr{F}}
\newcommand{\gG}{\mathscr{G}}
\newcommand{\hH}{\mathscr{H}}
\newcommand{\iI}{\mathscr{I}}
\newcommand{\jJ}{\mathscr{J}}
\newcommand{\kK}{\mathscr{K}}
\newcommand{\lL}{\mathscr{L}}
\newcommand{\mM}{\mathscr{M}}
\newcommand{\nN}{\mathscr{N}}
\newcommand{\oO}{\mathscr{O}}
\newcommand{\pP}{\mathscr{P}}
\newcommand{\qQ}{\mathscr{Q}}
\newcommand{\rR}{\mathscr{R}}
\newcommand{\sS}{\mathscr{S}}
\newcommand{\tT}{\mathscr{T}}
\newcommand{\uU}{\mathscr{U}}
\newcommand{\vV}{\mathscr{V}}
\newcommand{\wW}{\mathscr{W}}
\newcommand{\xX}{\mathscr{X}}
\newcommand{\yY}{\mathscr{Y}}
\newcommand{\zZ}{\mathscr{Z}}

\newcommand{\Aa}{\mathcal{A}}
\newcommand{\Bb}{\mathcal{B}}
\newcommand{\Cc}{\mathcal{C}}
\newcommand{\Dd}{\mathcal{D}}
\newcommand{\Ee}{\mathcal{E}}
\newcommand{\Ff}{\mathcal{F}}
\newcommand{\Gg}{\mathcal{G}}
\newcommand{\Hh}{\mathcal{H}}
\newcommand{\Ii}{\mathcal{I}}
\newcommand{\Jj}{\mathcal{J}}
\newcommand{\Kk}{\mathcal{K}}
\newcommand{\Ll}{\mathcal{L}}
\newcommand{\Mm}{\mathcal{M}}
\newcommand{\Nn}{\mathcal{N}}
\newcommand{\Oo}{\mathcal{O}}
\newcommand{\Pp}{\mathcal{P}}
\newcommand{\Qq}{\mathcal{Q}}
\newcommand{\Rr}{\mathcal{R}}
\newcommand{\Ss}{\mathcal{S}}
\newcommand{\Tt}{\mathcal{T}}
\newcommand{\Uu}{\mathcal{U}}
\newcommand{\Vv}{\mathcal{V}}
\newcommand{\Ww}{\mathcal{W}}
\newcommand{\Xx}{\mathcal{X}}
\newcommand{\Yy}{\mathcal{Y}}
\newcommand{\Zz}{\mathcal{Z}}

\def\units{{\mathbb G}_m}
\def\rightact{\hbox{$\leftharpoonup$}}
\def\leftact{\hbox{$\rightharpoonup$}}

\def\*C{{}^*\hspace*{-1pt}{\Cc}}
\def\*c{{}^*\hspace*{-1pt}{\Cc}}

\def\text#1{{\rm {\rm #1}}}

\def\smashco{\mathrel>\joinrel\mathrel\triangleleft}
\def\cosmash{\mathrel\triangleright\joinrel\mathrel<}

\def\ol{\overline}
\def\ul{\underline}
\def\dul#1{\underline{\underline{#1}}}
\def\Nat{\dul{\rm Nat}}
\def\Set{\dul{\rm Set}}
\def\MCl{\dul{\rm MCl}}

\renewcommand{\subjclassname}{\textup{2000} Mathematics Subject
     Classification}

\newtheorem{proposition}{Proposition}[section] 
\newtheorem{lemma}[proposition]{Lemma}
\newtheorem{corollary}[proposition]{Corollary}
\newtheorem{theorem}[proposition]{Theorem}

\theoremstyle{definition}
\newtheorem{Definition}[proposition]{Definition}
\newtheorem{example}[proposition]{Example}
\newtheorem{examples}[proposition]{Examples}

\theoremstyle{remark}
\newtheorem{remarks}[proposition]{Remarks}
\newtheorem{remark}[proposition]{Remark}

\newenvironment{proofsketch}{%
\renewcommand{\proofname}{Proof sketch}\proof}{\endproof}

\title{2D HQFTs and Frobenius $(\Gg,\Vv)$-categories}

\author[P. Gro{\ss}kopf]{Paul Gro{\ss}kopf}
\address{Paul Gro{\ss}kopf, Mathematical Institute, University of Oxford, United Kingdom}
\email{paul.grosskopf@gmx.at}

\maketitle
\begin{abstract}
Homotopy Quantum Field Theories as variants of Topological Quantum Field Theories are described by functors from some cobordism category, enriched with homotopical data, to a symmetric monoidal category $\Vv$. A new notion of HQFTs is introduced using target pairs of spaces $(X,Y)$ acounting for basepoints being sent to points in $Y$. Such $(X,Y)$-HQFTs are classified in dimension 1 by dualizable representations of $\Gg:=\Pi_1(X,Y)$, the relative fundamental groupoid. For dimension 2, the notion of crossed loop Frobenius $(\Gg,\Vv)$-categories is introduced, generalizing crossed Frobenius $G$-algebras, where $G$ is only a group. After stating generalities of these multi-object generalizations, a classification theorem of 2-dimensional $(X,Y)$-HQFTs via crossed loop Frobenius $(\Gg,\Vv)$-categories is proven. 
\end{abstract}

\tableofcontents

\section{Introduction}
Homotopy Quantum Field Theories were introduced by Turaev in \cite{Tu} and generalize the notion of Topological Quantum Field Theories. These can be seen as invariants of manifolds, that behave functorially with respect to gluings of manifolds along the boundary. One therefore considers the category $\Cob_{d+1}$, which has $d$-dimensional closed oriented manifolds as objects and compact oriented $(d+1)$-dimensional cobordisms as morphisms between them. This category further is endowed with a symmetric monoidal structure, given by the disjoint union. A TQFT can now be described as a symmetric monoidal functor from $\Cob_{d+1}$ to the category $\Vect_\KK$ of vector spaces over a field $\KK$ or more generally any symmetric monoidal \textit{target category} $\Vv$. In dimension 2, TQFTs are fully classified by commutative Frobenius monoids in $\Vv$ \cite{Ab}. \\
For HQFTs we fix a pointed space $X$ and endow all manifolds with maps to this \textit{target space} considered up to homotopy. One again can define a category $\Cob^X_{d+1}$ and define $X$-HQFTs to be symmetric monoidal functors $\Cob^X_{d+1}\to \Vect_\KK$. In dimension 1, such $X$-HQFTs with target space $X$ are classified by finite dimensional representations of $\pi_1(X)$. Futhermore, for contractible spaces, the theory of $X$-HQFTs specifies to the theory of TQFTs. If $X$ is homotopically more complex we get more involved algebraic structures. Particularly, if $X=K(G,1)$ is the Eilenberg-MacLane space of a group $G$, then 2D $X$-HQFTs are fully classified by so-called crossed Frobenius $G$-algebras \cite{Tu}. For homotopy 2-types work has been conducted in the cases $X=K(A,2)$, for $A$ an abelian group, \cite{BT} as well as the general case where both $\pi_1(X)$ and $\pi_2(X)$ are non trivial \cite{ST, PT}. Furthermore, it is known that $(d+1)$-dimensional $X$-HQFTs only depend on the homotopy $(d+1)$-type of the target space \cite{R}. \\
However, in this article I want to explore a more general notion of HQFTs with target $X$ a homotopy 1-type. Classically we consider cobordisms whose bounday components are pointed and pointed maps to $X$. We can loosen these restrictions and consider some subset $Y\subset X$, whose elements are possible images of the base points. This leads to the notion of $(X,Y)$-HQFTs as symmetric monoidal functors 
$$\Cob^{X,Y}_{d+1}\to \Vv.$$
We can classify these easily in dimension 1 by dualizable representations of the relative fundamental groupoid $\Gg=\Pi_1(X,Y)$ in $\Vv$, assuming some connectedness condition on the pair $(X,Y)$. 

\begin{mainthm}
Let $\Vv$ be a (strict) symmetric monoidal category. Further let $X$ be a homotopy 1-type, $Y\subset X$, such that for any connected component $C$ of $X$ we have $C\cap Y\neq \emptyset$, and denote $\Gg= \Pi_1(X,Y)$. The 1-dimensional $(X,Y)$-HQFTs 
$$\Zz: \Cob^{X,Y}_1 \to \Vv$$
 are fully classified by functors 
$$\Gg \to \Vv,$$
such that all objects in the image are dualizable.
\end{mainthm}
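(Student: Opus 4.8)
The plan is to present $\Cob^{X,Y}_1$ by generators and relations, identify it with the free symmetric monoidal category with duals generated by the groupoid $\Gg$, and then read off the classification from the resulting universal property.

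First I would describe the objects and morphisms explicitly. The objects of $\Cob^{X,Y}_1$ are finite ordered collections of signed points, each decorated by an element of $Y$; write $y^{+}$ and $y^{-}$ for a positively and a negatively oriented point mapped to $y\in Y$, so that a general object is a monoidal product of such generators. On morphisms, every compact oriented $1$-cobordism decomposes, after choosing a Morse function, into elementary pieces: oriented intervals joining an incoming to an outgoing point (propagators), cups $\coev_y\colon \id \to y^{+}\ot y^{-}$, caps $\ev_y\colon y^{-}\ot y^{+}\to \id$, and the symmetry isomorphisms. Equipping these with maps to $X$ rel boundary and passing to homotopy, a propagator from $y_0$ to $y_1$ is labeled precisely by a morphism $g\colon y_0\to y_1$ of $\Gg=\Pi_1(X,Y)$, by the very definition of the relative fundamental groupoid, since a map out of a $1$-manifold only sees $\pi_0$ and $\pi_1$ of $X$. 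A closed (circle) component decorated by a loop is not an extra generator: it factors as a cap after a propagator after a cup, and is therefore forced to evaluate to a categorical trace in $\End(\id)$. The relations among the generators are exactly the zigzag (triangle) identities, naturality of the symmetry, and functoriality of propagator composition under concatenation of paths in $\Gg$.

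Second, I would package this as a universal property: $\Cob^{X,Y}_1$ is the free symmetric monoidal category with duals on $\Gg$, so that a symmetric monoidal functor $\Zz\colon \Cob^{X,Y}_1\to \Vv$ is the same datum as a functor $F\colon \Gg\to\Vv$ together with a choice of dual for each $F(y)$. Concretely, given $F$ with every $F(y)$ dualizable, set $\Zz(y^{+})=F(y)$ and $\Zz(y^{-})=F(y)^{*}$, send a propagator labeled by $g$ to $F(g)$, and send the generating cups and caps to the coevaluation and evaluation witnessing the duality $F(y)\dashv F(y)^{*}$ in $\Vv$. Verifying that this respects the defining relations reduces exactly to the snake identities, which hold because we used genuine duals, while monoidality and compatibility with the symmetry are immediate. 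Conversely, any $\Zz$ restricts on propagators to a functor $F=\Zz|_{\Gg}\colon \Gg\to\Vv$, and since $\Zz$ is monoidal and carries the cobordism-level duality $y^{-}=(y^{+})^{\vee}$ to a duality in $\Vv$, each $\Zz(y^{+})=F(y)$ is forced to be dualizable. The two assignments are mutually inverse up to the evident monoidal natural isomorphisms, upgrading the correspondence to an equivalence between the category of $(X,Y)$-HQFTs and the category of dualizable representations of $\Gg$ in $\Vv$.

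The main obstacle is the first step: establishing the generators-and-relations presentation of the decorated cobordism category, namely that every morphism factors into the elementary pieces and that two factorizations represent the same decorated cobordism precisely when they differ by the listed relations. This is the decorated analogue of the classical fact that $1$-dimensional oriented cobordisms form the free symmetric monoidal category with duals on one object; the new content is bookkeeping the $X$-labels through each elementary move and checking that homotopy of the global map to $X$ is detected by the composite of the path-labels. Here the hypothesis that every component $C$ of $X$ satisfies $C\cap Y\neq\emptyset$ is used: it makes the inclusion $\Gg=\Pi_1(X,Y)\hookrightarrow\Pi_1(X)$ essentially surjective, hence an equivalence of groupoids, so that no component contributes homotopy data invisible to the $Y$-labeled objects. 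Once the presentation is in hand, the remaining verifications are the routine snake-identity checks sketched above.
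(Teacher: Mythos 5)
Your proposal is correct and follows essentially the same route as the paper: both decompose $1$-dimensional $(X,Y)$-cobordisms into the generators $\ev_y$, $\coev_y$, the symmetry, and path-labeled intervals, use the connectivity hypothesis to factor circle components through a point of $Y$, and then identify symmetric monoidal functors out of $\Cob^{X,Y}_1$ with dualizable representations of $\Gg$. Your packaging of the decomposition as a universal property (freeness on $\Gg$ with duals) is just a more explicit formulation of the generators-and-relations claim the paper states and verifies directly.
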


Moreover, the main result of this article is a classification of 2D $(X,Y)$-HQFTs via so-called \textit{crossed loop Frobenius $(\Gg,\Vv)$-categories}. I will give two equivalent definitions of Frobenius $(\Gg,\Vv)$-categories in analogue to the characterisation of Frobenius algebras via comultiplication and counit or via a non-degenerate pairing. I further introduce crossings and proof the main classification theorem of this article. 

\begin{mainthm}
Let $\Vv$ be a (strict) symmetric monoidal category. Further, let $X$ be a homotopy 1-type, $Y\subset X$, such that for any connected component $C$ of $X$ we have $C\cap Y\neq \emptyset$, and denote $\Gg= \Pi_1(X,Y)$. Then the 2-dimensional $(X,Y)$-HQFTs
$$\Zz: \Cob^{X,Y}_2\to \Vv$$
are fully classified by crossed loop Frobenius $(\Gg,\Vv)$-categories.
\end{mainthm}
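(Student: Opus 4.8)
The plan is to prove the equivalence by giving a generators-and-relations presentation of the symmetric monoidal category $\Cob^{X,Y}_2$ and matching the resulting data with the axioms of a crossed loop Frobenius $(\Gg,\Vv)$-category, mirroring Turaev's argument for $X=K(G,1)$ but upgrading the group $G$ to the groupoid $\Gg=\Pi_1(X,Y)$ and the target $\Vect_\KK$ to a general $\Vv$. First I would analyse the objects: a connected object of $\Cob^{X,Y}_2$ is a circle equipped with a map to $X$ whose basepoint lands in $Y$, and since $X$ is a homotopy $1$-type such a decorated circle is determined, up to the relevant homotopy, by an endomorphism (a ``loop'') $g\in\Hom_\Gg(y,y)$ at some object $y$ of $\Gg$; a general object is a finite disjoint union of these. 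Accordingly a crossed loop Frobenius $(\Gg,\Vv)$-category assigns to each such loop $g$ an object $L_g$ of $\Vv$, and given an HQFT $\Zz$ I would set $L_g:=\Zz(S^1_g)$, where $S^1_g$ denotes the circle decorated by $g$.

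Next I would identify the generating morphisms. Decomposing surfaces via Morse theory, every cobordism is built from decorated disks (a cup and a cap), decorated pairs of pants (a multiplication and a comultiplication), decorated cylinders, and the symmetry braiding, where the homotopical decoration of each generator is recorded by the morphisms of $\Gg$ that its map into $X$ induces along the seams. Applying $\Zz$ to these generators yields a unit and counit from the disks, a multiplication $L_g\otimes L_h\to L_{gh}$ and a dual comultiplication from the pants (composability of loops in $\Gg$ forcing the grading), a conjugation/crossing action of the morphisms of $\Gg$ arising from cylinders whose decorating map sweeps out a path, and the braiding from the symmetry. Restricting $\Zz$ to circles and cylinders gives a $1$-dimensional $(X,Y)$-HQFT, so by the first main theorem above each $L_g$ is dualizable; this is precisely what makes the cup and cap into a non-degenerate pairing and copairing, and hence what reconciles the two equivalent definitions of Frobenius $(\Gg,\Vv)$-category (via comultiplication and counit, or via a non-degenerate pairing).

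The heart of the argument is to show that the relations in $\Cob^{X,Y}_2$ coincide with the defining axioms of a crossed loop Frobenius $(\Gg,\Vv)$-category, and conversely that those axioms suffice to determine a symmetric monoidal functor. In the forward direction I would read each axiom off an elementary relation between decorated surfaces: associativity, unitality and the Frobenius identity from the genus-zero relations among pants and disks; the action axioms and the equivariance $\varphi_a(L_g)\subseteq L_{a g a^{-1}}$ from sliding the decorating path around a cylinder; crossed commutativity from the braided pair of pants carrying a half-twist of the decoration; and the trace/torus condition from comparing the two handle decompositions of the once-punctured torus with commuting decorations. In the converse direction I would define $\Zz$ on generators by the structure maps of the given crossed loop Frobenius $(\Gg,\Vv)$-category and extend it monoidally.

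The main obstacle I anticipate lies in well-definedness in this converse direction: one must verify that $\Zz$ is independent both of the chosen handle decomposition and of the chosen representative of the homotopy class of the decorating map. This calls for a Cerf-theoretic (movie-move) analysis showing that any two decompositions of the same decorated cobordism are connected by a finite sequence of elementary moves, together with a check that each move is absorbed by exactly one Frobenius or crossing axiom; the genuinely new difficulty relative to Turaev's single-group case is the homotopical bookkeeping over the many objects of the groupoid $\Gg$, since the labels of the seams now vary over $\Hom_\Gg$ and must match up under composition. Once the presentation is established, matching the generating data with the structure maps is routine, and assembling the correspondence into an equivalence between the category of $2$-dimensional $(X,Y)$-HQFTs and that of crossed loop Frobenius $(\Gg,\Vv)$-categories follows formally.
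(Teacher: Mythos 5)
Your plan follows essentially the same route as the paper: objects of $\Cob^{X,Y}_2$ are circles indexed by loops in $\Gg$, the structure maps $(m,j,\Delta,\nu,\phi,\eta,\coev)$ are read off decorated discs, two-holed discs (pants) and annuli, the crossed axioms are extracted from the basepoint-switching homeomorphism of the two-holed disc and from the two splittings of the once-punctured torus, and the converse direction is handled by defining $\Zz$ on these elementary pieces and proving independence of the decomposition. (The paper carries out that last step via invariance under Dehn twists, reflections, and gluings of cylinders, deferring to Turaev's splitting-system argument where you invoke a Cerf-theoretic/movie-move analysis; these are interchangeable ways of organizing the same verification.)

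One step in your sketch is, however, incorrect as stated: restricting a $2$-dimensional HQFT to circles and cylinders does \emph{not} yield a $1$-dimensional $(X,Y)$-HQFT, so you cannot invoke the $1$-dimensional classification (Theorem A) to conclude that each $L_g$ is dualizable. The objects of $\Cob^{X,Y}_1$ are decorated \emph{points}, not circles, and the cobordisms exhibiting the duality of a decorated circle are genuinely $2$-dimensional, so there is no restriction functor realizing your claim. The correct (and simpler) argument, which is the one the paper uses, is that $\Cob^{X,Y}_2$ is itself rigid (Proposition \ref{rig}): the oppositely oriented circle decorated by $\alpha^{-1}$ is dual to the circle decorated by $\alpha$, with evaluation and coevaluation given by the bent cylinders $C_{--}(\alpha;1_x)$ and $C_{++}(\alpha;1_x)$, and a symmetric monoidal functor preserves dual pairs, whence $L_{\alpha^{-1}}\cong L_{\alpha}^*$ and the pairing $\eta_\alpha$ is automatically non-degenerate. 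This repairs your step without affecting anything else; with that correction your outline matches the paper's proof.
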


As indicated by the nomenclature this theorem can be thought of a groupoid-version of Turaev's original result. Particularly, choosing $\Vv=\Vect_\KK$ and $Y=\{*\}$ one retrieves the classification of $X$-HQFTs via crossed Frobenius $G$-algebras. 

\subsection*{Acknowledgements}
I want to thank my PhD supervisor Joost Vercruysse at Université Libre de Bruxelles, who gave me the intial idea to work on this project. Further, I want to thank Alexis Virelizier, Nils Carqueville and André Henriques for valuable discussions and challenging me to be more precise and careful at times. 
I was supported by a FRIA fellowship of FNRS FNRS (Fonds National de la Recherche Scientifique), grant number FC41285, while working on this project. Furthermore, while finishing this article I am supported by the FWA (Foundation Wiener-Anspach), recieving a postdoctoral fellowship.

\section{Prelimaries and Conventions}
In this section we discuss elementary notions and conventions used in the rest of the article. 
\subsection{Monoidal categories}
\begin{Definition}
A \textit{\textbf{monoidal category}} $(\Vv,\ot,I,\alpha,\lambda,\rho)$  consists of a category $\Vv$, an object $I$ of $\Vv$ called \textit{\textbf{monoidal unit}} and bifunctor 
$$\ot: \Vv \times \Vv \to \Vv$$
 called \textit{\textbf{tensor product}}. Further, there are natural isomorphisms
$$\alpha: \ot(\Vv\times \ot) \to \ot(\ot \times \Vv),$$
 called the \textit{\textbf{associator}}, and 
$$\lambda: \ot (I\times \Vv) \to \Vv, \qquad \rho: \ot(\Vv\times I)\to \Vv,$$
called the \textit{\textbf{left and right unitor}}, satisfying the \textit{Pentagram rule}  for all objects $A,B,C,D$ of $\Vv$:
\[
\xymatrix{
 & (A\ot B)\ot(C \ot D)  \ar[rd]^{\alpha_{A \ot B,C,D}} & \\
A\ot(B\ot(C \ot D)) \ar[ru]^{\alpha_{A,B,C\ot D}}  \ar[d]_{A \ot \alpha_{B,C,D}}& &((A\ot B)\ot C) \ot D  \ar[d]^{\alpha_{A,B,C}\ot D} \\
A\ot((B\ot C) \ot D) \ar[rr]_{\alpha_{A,B\ot C,D}} & & (A\ot(B\ot C)) \ot D\\
}
\]
and 
\[
\xymatrix{
A\ot(I\ot C)  \ar[r]^{\alpha_{A,I,C}}  \ar[d]_{A \ot \lambda_C} & (A\ot I)\ot C \ar[d]^{ \rho_A \ot C}\\
A\ot C \ar@{=}[r] & A\ot C.
}
\]
If $\alpha,\lambda,\rho$ are identities, the monoidal category is called \textit{\textbf{strict}}.
\end{Definition}

\begin{Definition}
Let $(\Vv,\ot, I, \alpha, \lambda, \rho)$ and $(\Vv',\boxtimes, I', \alpha', \lambda', \rho')$ be two monoidal categories. A \textit{\textbf{monoidal functor}} between $\Vv$ and $\Vv'$ is a triple $(F,\phi, \phi_0),$ where $F:\Vv\to \Vv'$ is a functor, $\phi: F\boxtimes F \to F (\Vv\ot \Vv)$ a natural transformation and $\phi_0: I'\to FI$ a morphism in $\Vv'$ making the following diagrams commute:
\[
\xymatrix{
FA \boxtimes (FB \boxtimes FC) \ar[rr]^{\alpha'_{FA,FB,FC}} \ar[d]_{FA\boxtimes \phi_{B,C}} & & (FA \boxtimes FB) \boxtimes FC \ar[d]^{\phi_{A,B}\boxtimes FC} \\
FA \boxtimes F(B \ot C)\ar[d]_{\phi_{A,B\ot C}} & &F(A \ot B) \boxtimes FC \ar[d]^{\phi_{A\ot B,C}} \\
F(A \ot (B \ot C)) \ar[rr]_{F\alpha_{A,B,C}} & & F((A \ot B) \ot C), \\
}
\]
\[
\xymatrix{
FB \boxtimes I' \ar[r]^-{\rho'_{FB}} \ar[d]_{FB \boxtimes \phi_0} & FB & I'\boxtimes FB \ar[r]^-{\lambda'_{FB}} \ar[d]_{\phi_0 \boxtimes FB} &  FB\\
FB \boxtimes FI \ar[r]_{\phi_{B,I}} & F( B \ot I), \ar[u]_{F\rho_B} & FI \boxtimes FB \ar[r]_{\phi_{I,B}} & F( I \ot B). \ar[u]_{F\lambda_B} \\
}
\]
 
If $\phi$ and $\phi_0$ are identities, the monoidal functor is called \textit{\textbf{strict}}. 
\end{Definition}

\begin{Definition}
Let $(\Vv,\ot, I)$ and $(\Vv',\boxtimes, I')$ be two monoidal categories and $(F,\phi, \phi_0)$ and $(G,\psi, \psi_0)$ two monoidal functors $\Vv\to \Vv'$. A natural transformation $\rho: F\Rightarrow G$ is called \textit{\textbf{monoidal}}, if 

\[
\xymatrix{
FA \boxtimes FB \ar[r]^{\rho_A \boxtimes \rho_B} \ar[d]_{\phi_{A,B}} & GA \boxtimes GB \ar[d]^{\psi_{A,B}} & I' \ar[r]^{\psi_0} \ar[d]_{\phi_0} & GI \\
F(A \ot B)  \ar[r]_{\rho_{A\ot B}} & G(A \ot B), &FI \ar[ru]_{\rho_I} & \\}
\]
are commutative diagrams. 
\end{Definition}

We define \textit{braided} and \textit{symmetric} monoidal categories. These give us the right setting to talk for example about any commutative and cocommutative phenomena.

\begin{Definition}
A \textit{\textbf{braiding}} of a monoidal category $\Vv$ is a natural isomorphims $\sigma_{A,B}: A\ot B \to B \ot A$ such that $\lambda_A \sigma_{I,A}= \rho _A$ for all objects $A$ of $\Vv$ and the diagrams
\[
\xymatrix{
(A\ot B) \ot C \ar[r]^{\sigma_{A\ot B,C}} & C\ot (A\ot B) \ar[d]^{\alpha_{C,A,B}} & A\ot( B \ot C) \ar[r]^{\sigma_{A, B \ot C}} & (B\ot C) \ot A \ar[d]^{\alpha_{B,C,A}^{-1}}\\
 A \ot (B\ot C) \ar[u]^{\alpha_{A,B,C}} \ar[d]_{A\ot \sigma_{B,C}} & (C\ot A)\ot B  & (A \ot B)\ot C, \ar[u]^{\alpha^{-1}_{A,B,C}} \ar[d]_{\sigma_{A,B}\ot C} & B\ot (C\ot A)  \\
A \ot (C \ot B) \ar[r]_{\alpha_{A,C,B}} & (A\ot C)\ot B, \ar[u]_{\sigma_{A,C}\ot B} & (B \ot A) \ot C \ar[r]_{\alpha^{-1}_{B,A,C}} & B\ot (A\ot C) \ar[u]_{B \ot \sigma_{A,C}} ,\\
}
\]
commute for all objects $A,B,C$ in $\Vv$.\\
A monoidal category is called \textit{\textbf{braided}}, if it has a braiding $\sigma$ and \textit{\textbf{symmetric}}, if $\sigma^{-1}_{A,B}=\sigma_{B,A}$ for all objects $A,B$ in $\Vv$.\\
A monoidal functor $(F,\phi,\phi_0)$ between two braided monoidal categories  $(\Vv,\ot, I, \alpha, \lambda, \rho, \sigma)$ and $(\Vv',\boxtimes, I', \alpha', \lambda', \rho',\sigma')$ is called \textit{\textbf{braided}} if 
\[
\xymatrix{
FA\boxtimes FB \ar[r]^{\sigma'} \ar[d]_{\phi_{A,B}} & FB \boxtimes FA \ar[d]^{\phi_{B,A}}\\
F(A\ot B) \ar[r]_{F\sigma} & F(B\ot A)}
\]
are commutative diagrams for all $A,B$ in $\Vv$. A \textit{\textbf{symmetric}} monoidal functor is a braided monoidal functor between symmetric monoidal categories.
\end{Definition}

\begin{Definition}
Let $(\Vv,\ot, I)$ and $(\Vv',\boxtimes, I')$ be two symmetric monoidal categories. They are \textit{\textbf{symmetric monoidally equivalent}}, if there exist symmetric monoidal functors $(F,\phi, \phi_0): \Vv\to \Vv'$ and $(G,\psi, \psi_0):\Vv'\to\Vv$ and monoidal natural isomorphisms $\epsilon: FG \Rightarrow \Id_{\Vv'}$ and $\eta:\Id_{\Vv}\Rightarrow GF$. 
\end{Definition}

\begin{Definition} 
Let $(\Vv,\ot, I)$ and $(\Vv',\boxtimes, I')$ be two monoidal categories. The \textit{\textbf{symmetric monoidal functor category}} is defined as the full subcategory $\Fun^{\ot,s}(\Vv,\Vv')\subseteq\Fun^{\ot}(\Vv,\Vv')$ with objects symmetric monoidal functors and morphisms monoidal natural transformations. 
\end{Definition}

The following proposition is well know and is therefore only stated without proof.
\begin{proposition} \label{funcateq2}
Let $\Cc$ be a symmetric monoidal category and $\Vv$, $\Vv'$ two symmetric monoidally equivalent categories. Then the monoidal functor categories $\Fun^{\ot,s}(\Vv,\Cc)$ and $\Fun^{\ot,s}(\Vv',\Cc)$ are equivalent as categories. 
\end{proposition}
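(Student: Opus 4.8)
The plan is to realise the desired equivalence by \emph{precomposition} with the functors witnessing the symmetric monoidal equivalence of $\Vv$ and $\Vv'$. Fix symmetric monoidal functors $(F,\phi,\phi_0):\Vv\to\Vv'$ and $(G,\psi,\psi_0):\Vv'\to\Vv$ together with monoidal natural isomorphisms $\epsilon:FG\Rightarrow\Id_{\Vv'}$ and $\eta:\Id_{\Vv}\Rightarrow GF$, as supplied by the definition of symmetric monoidal equivalence. Since $\Cc$ is fixed, converting a functor out of $\Vv$ into one out of $\Vv'$ amounts to precomposing with the functor $G:\Vv'\to\Vv$, and conversely.

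First I would check that precomposition is well defined at the level of the symmetric monoidal functor categories. The composite of two symmetric monoidal functors is again symmetric monoidal, its structure morphisms being the evident composites of the given ones; hence for any symmetric monoidal $H:\Vv\to\Cc$ the functor $H\circ G:\Vv'\to\Cc$ is symmetric monoidal, and likewise $K\circ F$ for symmetric monoidal $K:\Vv'\to\Cc$. Moreover, whiskering a monoidal natural transformation on either side by a monoidal functor again yields a monoidal natural transformation, so precomposition also acts correctly on morphisms. This produces two functors
$$(-)\circ G:\Fun^{\ot,s}(\Vv,\Cc)\to\Fun^{\ot,s}(\Vv',\Cc),\qquad (-)\circ F:\Fun^{\ot,s}(\Vv',\Cc)\to\Fun^{\ot,s}(\Vv,\Cc).$$

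Next I would show that these two functors are quasi-inverse to one another, using $\eta$ and $\epsilon$. For $H\in\Fun^{\ot,s}(\Vv,\Cc)$ the composite $((-)\circ F)\circ((-)\circ G)$ sends $H$ to $H\circ G\circ F=H\circ(GF)$, and left-whiskering $\eta$ by $H$ provides an isomorphism $H\eta\colon H=H\circ\Id_{\Vv}\Rightarrow H\circ GF$ with components $(H\eta)_A=H(\eta_A)$. The two points to verify are that each $H\eta$ is a \emph{monoidal} natural isomorphism, and that the family $\{H\eta\}_H$ is natural in $H$; the latter is immediate from naturality of a given monoidal transformation $\tau\colon H\Rightarrow H'$. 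Hence $\{H\eta\}_H$ assembles into a natural isomorphism $\Id_{\Fun^{\ot,s}(\Vv,\Cc)}\Rightarrow((-)\circ F)\circ((-)\circ G)$. The symmetric argument with $\epsilon$, applied to $K\circ FG$, yields a natural isomorphism $((-)\circ G)\circ((-)\circ F)\Rightarrow\Id_{\Fun^{\ot,s}(\Vv',\Cc)}$, which establishes the equivalence.

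The main obstacle is the bookkeeping behind the monoidality of $H\eta$. Concretely, one must unwind the composite monoidal structure on $H\circ GF$ and compare $H(\eta_{A\ot B})$ postcomposed with the structure map of $H$ against the structure map of $H\circ GF$ postcomposed with $H(\eta_A)\ot H(\eta_B)$; after applying the functor $H$ this reduces exactly to the monoidal-naturality identity for $\eta$ together with functoriality of $H$, and the unit compatibility is handled in the same way. I would note that no triangle identities are required, since for an equivalence of categories it suffices to exhibit the two natural isomorphisms above. (Abstractly, this is just the statement that $\Fun^{\ot,s}(-,\Cc)$ is a $2$-functor and therefore preserves equivalences, but the direct precomposition argument is cleaner to record here.)
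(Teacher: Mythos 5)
Your proposal is correct, and in fact the paper offers nothing to compare it against: the proposition is recorded there as ``well known'' and stated without proof, so your precomposition argument is exactly the standard argument being implicitly invoked. The only bookkeeping remark worth adding is that monoidality of the whiskered transformation $H\eta$ uses, besides functoriality of $H$ and the monoidal-naturality of $\eta$, also the naturality of the structure maps $h_{A,B}$ of $H$ (to slide $H(\eta_A)\ot H(\eta_B)$ past $h$); with that, your verification goes through, and since only an equivalence of plain categories is claimed, exhibiting the two natural isomorphisms $\{H\eta\}_H$ and $\{K\epsilon\}_K$ indeed suffices without any triangle identities.
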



In order to avoid writing down associators and unitors, we might consider MacLane's coherence theorem for symmetric monoidal categories:
\begin{theorem}[{\cite[Thm.1. in XI.3.]{MacLane} as well as \cite[Thm.XI.5.3.]{Kassel}}]
Any symmetric monoidal category $\Vv$ is symmetric monoidally equivalent to a strict symmetric monoidal category $\Ss$.
\end{theorem}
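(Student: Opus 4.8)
The plan is to give the classical strictification via lists of objects, reducing every coherence verification to MacLane's coherence theorem for (symmetric) monoidal categories. Concretely, I would let $\Ss$ have as objects all finite sequences $(A_1,\dots,A_n)$, $n\ge 0$, of objects of $\Vv$ (including the empty sequence), and fix once and for all the left-bracketed iterated tensor functor $T\colon \Ss \to \Vv$ sending $(A_1,\dots,A_n)$ to $(\cdots(A_1\ot A_2)\ot\cdots)\ot A_n$, with the empty sequence going to $I$ and a singleton to $A_1$. Morphisms in $\Ss$ are then declared by $\Hom_\Ss\big((A_i),(B_j)\big):=\Hom_\Vv\big(T(A_i),T(B_j)\big)$, with composition and identities taken in $\Vv$. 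The monoidal product on objects is concatenation, which is strictly associative and strictly unital with the empty sequence as unit, so $\Ss$ is strict by construction.

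The first real step is to make $\ot$ functorial on $\Ss$. Given $f\colon (A_i)\to(A'_i)$ and $g\colon(B_j)\to(B'_j)$, that is, $\Vv$-maps $T(A)\to T(A')$ and $T(B)\to T(B')$, I would define $f\ot g$ as
$$T(A,B)\ \xrightarrow{\ \kappa\ }\ T(A)\ot T(B)\ \xrightarrow{\ f\ot g\ }\ T(A')\ot T(B')\ \xrightarrow{\ \kappa^{-1}\ }\ T(A',B'),$$
where $\kappa$ is the canonical isomorphism re-bracketing a concatenation. Here I invoke coherence: any two isomorphisms between fixed objects built from $\alpha,\lambda,\rho$ agree, so $\kappa$ is unambiguous, and functoriality of $\ot$ together with the interchange law then follows formally.

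Next I would equip $\Ss$ with a braiding by setting $\beta_{(A),(B)}\colon(A,B)\to(B,A)$ to be the $\Vv$-morphism $T(A,B)\xrightarrow{\kappa}T(A)\ot T(B)\xrightarrow{\sigma}T(B)\ot T(A)\xrightarrow{\kappa^{-1}}T(B,A)$. By the symmetric coherence theorem, two formal composites of $\alpha,\lambda,\rho,\sigma^{\pm1}$ with the same underlying permutation of factors are equal; this is exactly what verifies the hexagon axioms, naturality, and, since $\sigma$ is a symmetry, the relation $\beta^{-1}_{(A),(B)}=\beta_{(B),(A)}$. Thus $\Ss$ is a strict symmetric monoidal category. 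Finally I construct the equivalence: $G\colon\Vv\to\Ss$ sends $A$ to the singleton $(A)$ and a morphism to itself, and $F:=T\colon\Ss\to\Vv$ sends a sequence to its iterated tensor. Both carry strong symmetric monoidal structures whose comparison data are again the canonical $\kappa$'s, one has $FG=\Id_\Vv$, and the natural isomorphism $\Id_\Ss\cong GF$ (collapsing a list $(A_1,\dots,A_n)$ to the singleton $(A_1\ot\cdots\ot A_n)$) is given objectwise by identities of $\Vv$; monoidality of these isomorphisms is one last appeal to coherence, yielding a symmetric monoidal equivalence in the sense of the Definition above.

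The main obstacle is not any single diagram but their sheer number: individually each is routine, and the clean way to discharge them all at once is to phrase every required equality as an identity of formal diagrams and cite MacLane's coherence theorem. An alternative that hides the bookkeeping is the Yoneda-style argument, taking $\Ss$ to be the full image of the strong monoidal embedding $A\mapsto A\ot(-)$ into the strict monoidal category $\Fun(\Vv,\Vv)$ under composition, where strictness is automatic and the only substantial check is that this embedding is fully faithful; I would mention this as the conceptual shortcut while carrying out the list construction explicitly.
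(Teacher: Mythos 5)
The paper does not prove this theorem at all: it is quoted verbatim from the literature (MacLane XI.3, Kassel XI.5.3) and immediately used to justify assuming strictness, so there is no internal argument to compare against. Your proposal is correct and is essentially the standard strictification proof given in the cited sources --- objects as finite lists, hom-sets transported along the left-bracketed iterated tensor functor $T$, concatenation as the strict product, and MacLane's (symmetric) coherence theorem discharging the functoriality, braiding, and monoidal-equivalence diagrams --- so it faithfully supplies exactly the proof the paper delegates to its references.
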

Henceforth, we assume every symmetric monoidal category to be strict. 

\subsection{Dualizability and Rigid categories}
\begin{Definition}
Let $(\Vv,\ot,I)$ be a monoidal category and $X$ an object of $\Vv$. A \textit{\textbf{right dual}} of $X$ is an object $X^*$ together with two morphisms $\ev_X: X\ot X^*\to I$ and $\coev_X: I\to X^*\ot X$ in $\Vv$ called \textbf{\textit{evaluation}} and \textbf{\textit{coevaluation}} satisfying the \textit{zig-zag equations}:
\begin{eqnarray*}
(\ev_X\ot X)(X\ot \coev_X)&=&X,\\
(X^* \ot \ev_X)(\coev_X \ot X^*)&=&X^*.
\end{eqnarray*}
$X$ is called \textit{\textbf{right dualizable}}, if there exist a right dual of $X$ and $\Vv$ is called \textit{\textbf{right rigid}}, if all objects is right dualizable. Similarly, given an object $X$, its \textit{\textbf{left dual}} ${}^*X$ can be defined via $\ev^X: {}^*X\ot X\to I, \coev^X:I\to X\ot {}^*X$ satisfying the analogous equations. $X$ is called \textit{\textbf{left dualizable}}, if there exist a left dual of $X$ and $\Vv$ is called \textit{\textbf{left rigid}}, if all objects are left dualizable. 
\end{Definition}
Duals are not unique, however they are unique up to unique isomorphisms in the following sense:

\begin{proposition} \label{unirig}
Let $\Vv$ be a monoidal category and $X$ an object in $\Vv$. Let $(X^*,\ev_X,\coev_X)$ and $(X',\ev'_X,\coev'_X)$ be two right duals of $X$, then there exists a unique isomorphism $\phi:X^*\to X'$ such that $\ev'_X(X\ot \phi)=\ev_X$ and $\coev'_X=(\phi\ot X)\coev_X$.
\end{proposition}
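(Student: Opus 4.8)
The plan is to prove existence and uniqueness separately, using the zig-zag equations as the only tool. I would construct the isomorphism explicitly from the two sets of evaluation/coevaluation data. The natural candidate for $\phi: X^* \to X'$ is the composite
\[
\phi = (X^* \ot \ev'_X)(\coev_X \ot X'),
\]
which lands in $X'$ by sliding the first dual structure against the second coevaluation. Its proposed inverse is the symmetric composite $\psi = (X' \ot \ev_X)(\coev'_X \ot X^*)$ going from $X'$ back to $X^*$. First I would verify that $\psi\phi = X^*$ and $\phi\psi = X'$ by expanding each composite, inserting the interchange law for $\ot$ to regroup, and then collapsing using the four zig-zag identities (two for each dual). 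This is the routine bookkeeping part: each verification is a snake that straightens out once the two independent zig-zags are applied in sequence.

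Next I would check that $\phi$ satisfies the two stated compatibility conditions, $\ev'_X(X \ot \phi) = \ev_X$ and $\coev'_X = (\phi \ot X)\coev_X$. For the first, substituting the definition of $\phi$ gives $\ev'_X(X \ot (X^* \ot \ev'_X)(\coev_X \ot X'))$; regrouping via interchange and applying the zig-zag equation $(\ev_X \ot X)(X \ot \coev_X) = X$ for the $(X^*,\ev_X,\coev_X)$ structure reduces this to $\ev_X$. The coevaluation condition is dual and follows by the mirror computation. These are the defining properties that pin down $\phi$.

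For uniqueness, suppose $\phi'$ is any morphism $X^* \to X'$ satisfying $\ev'_X(X \ot \phi') = \ev_X$ and $\coev'_X = (\phi' \ot X)\coev_X$. I would recover $\phi'$ from these two equations by the standard rigidity argument: start from $\phi' = \lambda$-free $\phi'$ (recall we are strict), insert the zig-zag identity for the $X'$ structure to write $\phi' = (X^* \ot \ev'_X)(\phi' \ot X \ot X')$ type expression, then push $\phi'$ through using the compatibility relations until the formula no longer mentions $\phi'$ but only $\ev'_X$, $\coev_X$, and the duals — i.e. until it equals the explicit $\phi$ constructed above. Concretely, the manipulation expresses $\phi'$ in terms of $\coev'_X = (\phi' \ot X)\coev_X$ and then trades $\phi' \ot X$ for $\coev'_X \coev_X^{-}$-data via a zig-zag, eliminating the dependence on $\phi'$.

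The main obstacle I expect is not conceptual but notational: keeping track of which zig-zag identity applies to which dual structure, and correctly managing the tensor-factor positions (left versus right placement of $\ev$ and $\coev$) so that the interchange/functoriality of $\ot$ is applied in the right slots. The convention here places $\coev_X: I \to X^* \ot X$ with the dual on the left, which is slightly asymmetric, so I would draw the string diagrams (or track factor positions carefully) to ensure every snake is straightened in the admissible direction. Once strictness lets me drop all associators and unitors, both existence and uniqueness reduce to four applications of the zig-zag equations, and the only real care needed is in the order of composition.
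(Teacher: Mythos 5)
The paper itself states this proposition without proof (it is invoked as a standard fact), so the only question is whether your argument stands on its own. Your overall strategy is exactly the standard one and would work: build the comparison morphism from the two duality data, check it is invertible and compatible via the zig-zag equations, and get uniqueness by expanding an arbitrary compatible morphism against a zig-zag and substituting the compatibility relations.

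However, there is a concrete error that propagates through your details: with the paper's conventions ($\ev'_X : X \ot X' \to I$, $\coev_X : I \to X^* \ot X$, composition read right to left), your candidate
$\phi = (X^* \ot \ev'_X)(\coev_X \ot X')$
is a morphism $X' \to X^*$, not $X^* \to X'$: the first factor applied is $\coev_X \ot X' : X' \to X^* \ot X \ot X'$, and the second is $X^* \ot \ev'_X : X^* \ot X \ot X' \to X^*$. The morphism the proposition asks for is what you call $\psi = (X' \ot \ev_X)(\coev'_X \ot X^*) : X^* \to X'$. So the two formulas must be swapped; as written, neither compatibility equation even typechecks for your $\phi$. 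The swap also reverses your bookkeeping of which zig-zag is used where: with the corrected $\phi$, the identity $\ev'_X(X \ot \phi) = \ev_X$ uses the zig-zag $(\ev'_X \ot X)(X \ot \coev'_X) = X$ of the \emph{primed} structure (not the unprimed one, as you claim), while $(\phi \ot X)\coev_X = \coev'_X$ uses the unprimed zig-zag $(\ev_X \ot X)(X \ot \coev_X) = X$. Once the labels are interchanged, your existence, invertibility and uniqueness steps all go through; in particular uniqueness follows cleanly from
$\phi' = \phi'(X^* \ot \ev_X)(\coev_X \ot X^*) = (X' \ot \ev_X)\bigl(((\phi' \ot X)\coev_X) \ot X^*\bigr) = (X' \ot \ev_X)(\coev'_X \ot X^*)$,
which uses only one of the two compatibility equations.
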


\begin{remark}
Duals are also compatible with the tensor product: given $X,Y \in \Vv$ with right duals $X^*, Y^*$, then $(X\ot Y)$ is right dualizable with right dual $(Y^*\ot X^*)$ via
\begin{eqnarray*}
\ev_{X\ot Y}&:=&\ev_X(X\ot \ev_Y \ot X^*): X\ot Y\ot Y^*\ot X^* \to I, \\
\coev_{X\ot Y}&:=&(Y^*\ot \coev_X\ot Y)\coev_{Y}: I\to Y^*\ot X^* \ot X\ot Y.
\end{eqnarray*}
Furthermore, in symmetric monoidal categories one need not to differentiate between left and right duals, since any right dual $(X^*,\ev_X,\coev_X)$ of $X$ is also a left dual via $\ev_X\sigma$ and $\sigma \coev_X$.
\end{remark}

\begin{Definition}
Let $\Vv$ be a symmetric monoidal category. $X$ is called \textit{\textbf{dualizable}} if has a left/right dual. $\Vv$ is called \textit{\textbf{rigid}}, if every object is dualizable.
\end{Definition}

Given a morphisms $f:A\to B$ between dualizable objects $A$ and $B$ we consider the morphisms $f^*: B^*\to A^*$ defined by 
$$f^*:=(A^*\ot \ev_B)(A^*\ot f\ot B^*)(\coev_A \ot B^*).$$
\[
\begin{tikzpicture}
\node (1) at (0,0) {$B^*$};
\node (2) at (0,-2) {$A^*$};
\node[morphism] (f) at (0,-1) {$f^*$};
\draw (1) to [out=-90,in=90] (f.north);
\draw (f.south) to [out=-90,in=90] (2);
\node (1) at (1,-1) {$=$};
\end{tikzpicture} 
\begin{tikzpicture}
\node (1) at (0,0) {$B^*$};
\node (2) at (-2,-2) {$A^*$};
\node (0) at (-1.5,0) {};
\node (5) at (-0.5,-2) {};
\node[morphism] (f) at (-1,-1) {$f$};
\draw (1) to [out=-90,in=0] (5.center);
\draw (f.south) to [out=-90,in=180] (5.center);
\draw (0.center) to [out=180, in=90] (2);
\draw (0.center) to [out=0, in=90] (f.north);
\end{tikzpicture} 
\]
One easily checks that this morphism satisfies:
\begin{eqnarray*}
(f^*\ot B)\coev_B&=&(A^*\ot f)\coev_{A}\\
\ev_A (A\ot f^*)&=& \ev_B(f\ot B^*)\\
\end{eqnarray*}

The following is again well known and therefore stated without proof.
\begin{proposition} \label{natrig}
Let $(\Vv,\ot, I)$, $(\Ww,\boxtimes, J)$ be two strict monoidal categories, $(F,\phi,\phi_0),(G,\psi,\psi_0): \Vv\to \Ww$ be two monoidal functors and $\alpha: F\Rightarrow G$ a monoidal natural transformation. If $\Vv$ is right rigid, then $\alpha$ is a natural isomorphism.
\end{proposition}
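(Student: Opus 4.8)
The plan is to show that every component $\alpha_X\colon FX\to GX$ is invertible and to build its inverse $\beta_X\colon GX\to FX$ out of the component $\alpha_{X^*}$ at a right dual of $X$; once all components are isomorphisms, $\alpha$ is automatically a natural isomorphism, since the componentwise inverses assemble into a natural transformation.

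First I would record that $F$ and $G$ transport the duality data on $X$ to duality data on $FX$ and $GX$. Fixing a right dual $(X^*,\ev_X,\coev_X)$ in $\Vv$, the object $F(X^*)$ becomes a right dual of $FX$ in $\Ww$ with $\ev_{FX}=\phi_0^{-1}\circ F(\ev_X)\circ \phi_{X,X^*}$ and $\coev_{FX}=\phi^{-1}_{X^*,X}\circ F(\coev_X)\circ \phi_0$, and similarly for $G$ using $\psi,\psi_0$; here I use that the structure morphisms are isomorphisms, which is the one place the hypothesis enters beyond right rigidity of $\Vv$. With these data in hand I define
\[
\beta_X:=(\ev_{GX}\boxtimes FX)\circ(GX\boxtimes \alpha_{X^*}\boxtimes FX)\circ(GX\boxtimes \coev_{FX}),
\]
the mate of $\alpha_{X^*}\colon F(X^*)\to G(X^*)$ across the two dualities.

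The heart of the argument is the pair of compatibility identities expressing that the monoidal transformation $\alpha$ intertwines the induced (co)evaluations:
\[
\ev_{GX}\circ(\alpha_X\boxtimes \alpha_{X^*})=\ev_{FX},\qquad (\alpha_{X^*}\boxtimes \alpha_X)\circ \coev_{FX}=\coev_{GX}.
\]
Each follows by chaining three facts: the monoidality square of $\alpha$ on the pair $(X,X^*)$, which rewrites $\psi_{X,X^*}\circ(\alpha_X\boxtimes\alpha_{X^*})$ as $\alpha_{X\ot X^*}\circ\phi_{X,X^*}$; the naturality of $\alpha$ applied to the morphism $\ev_X$ (respectively $\coev_X$); and the unit condition $\alpha_I\circ\phi_0=\psi_0$, which yields $\psi_0^{-1}\circ\alpha_I=\phi_0^{-1}$ and cancels the unit constraints. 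I expect this bookkeeping --- placing the structure constraints and their inverses correctly --- to be the main obstacle, even though it is entirely formal.

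Finally I would substitute these identities into the two composites. For $\alpha_X\circ\beta_X$, sliding $\alpha_X$ through the outer evaluation and applying the $\coev$-compatibility rewrites the expression as $(\ev_{GX}\boxtimes GX)\circ(GX\boxtimes\coev_{GX})$, which is $\id_{GX}$ by the zig-zag equation for $GX$; symmetrically, $\beta_X\circ\alpha_X$ collapses via the $\ev$-compatibility and the zig-zag for $FX$ to $\id_{FX}$. Hence $\beta_X=\alpha_X^{-1}$ for every object $X$, so all components of $\alpha$ are isomorphisms and $\alpha$ is a natural isomorphism.
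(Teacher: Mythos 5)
The paper itself gives no proof of this proposition (it is stated as ``well known''), so there is nothing internal to compare against; your argument is the standard one --- transport the duality data along $F$ and $G$, then define $\beta_X$ as the mate of $\alpha_{X^*}$ --- and it is correct and complete in all essentials. The two compatibility identities $\ev_{GX}\circ(\alpha_X\boxtimes\alpha_{X^*})=\ev_{FX}$ and $(\alpha_{X^*}\boxtimes\alpha_X)\circ\coev_{FX}=\coev_{GX}$ do follow exactly as you say from the monoidality square at $(X,X^*)$, naturality of $\alpha$ at $\ev_X$ and $\coev_X$, and the unit condition, and the zig-zag equations for the transported dualities then give $\alpha_X\circ\beta_X=\id_{GX}$ and $\beta_X\circ\alpha_X=\id_{FX}$; componentwise inverses of a natural transformation are automatically natural, so $\alpha$ is a natural isomorphism.

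The one point that deserves emphasis is precisely the caveat you flagged: your construction of $\ev_{FX}$, $\coev_{FX}$, $\ev_{GX}$, $\coev_{GX}$ requires $\phi,\phi_0,\psi,\psi_0$ to be invertible, i.e.\ the functors to be \emph{strong} monoidal. The paper's definition of monoidal functor is the lax one (no invertibility is demanded of $\phi$, $\phi_0$), and under that literal reading the proposition is false: take $\Vv=\Ww$ the category of finite-dimensional vector spaces, $F=\Id$, $G=(-)\otimes A$ for a finite-dimensional algebra $A$ with $\dim A>1$ (lax monoidal via the multiplication and unit of $A$), and $\alpha_V(v)=v\otimes 1_A$; this is a monoidal natural transformation that is injective but never surjective, hence not an isomorphism. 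So the extra hypothesis you invoked is not a defect of your proof but is genuinely necessary --- it is the standard convention under which this classical statement holds, and it is also what the paper implicitly needs when it applies the proposition to conclude that all morphisms of HQFTs are isomorphisms.
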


\subsection{The relative fundamental groupoid}

\begin{Definition}
Let $(X,Y)$ be a pair of topological spaces. Define $\Pi_{1}(X,Y)$ the \textit{\textbf{fundamental groupoid}} of $(X,Y)$ as the following category:
\begin{itemize}
\item Objects are given by $y\in Y$.
\item Given $y,z \in Y$, the Hom-sets are given by 
$$\Pi_1(X,Y)(y,z):=\left\{[f]| f:[0,1]\to X, f(0)=y,f(1)=z\right\},$$
 where $[f]=[g]$ iff they are $f$ and $g$ are homotopic relativ to the endpoints. 
\item The composition of $[f]$ and $[g]$ is given by $[g]\circ [f]=[f\cdot g]$, where 
\begin{eqnarray*}
(f\cdot g)(t):=\begin{cases}
f(2t) & t\in [0,\frac{1}{2}]\\
g(2t-1) & t\in [\frac{1}{2},1]
\end{cases}
\end{eqnarray*} 
\item The units are given by the constant paths $1_y$ for $y\in Y$.
\item The inverse of $[f]$ is given by $[\bar f]$, where 
$$\bar f(t):= f(1-t)$$.
\end{itemize}
Denote the homsets $\Hom_{\Pi_1(X,Y)}(y,z)=:\Pi_1(y,z)$, if there is no ambiguity about the pair $(X,Y)$.
\end{Definition}

One checks easily that the fundamental groupoid is in fact a groupoid, meaning that $[f]\circ[\bar f]=1_z$ and $[\bar f]\circ[f]=1_y$ for all $f\in \Pi_1(y,z)$. In the following we write $\alpha: x\to y$ or $\beta:y\to z$ for elements in the fundamental groupoid and $\alpha\beta: x\to z$ for the composition $\beta\circ \alpha \in \Pi_1(x,z)$.

\section{The category of $(X,Y)$-cobordisms}
To provide a categorical definition of $(X,Y)$-HQFTs it is helpful to put homotopy conditions already in the definition of the source category and not as a condition on the functor. So throughout the following let $X$ be a homotopy 1-type, $Y\subset X$ and $d\geq0$ be a natural number. By manifold we will always mean a differentiable manifold and by closed we understand it to be compact and with empty boundary.

\subsection{$(X,Y)$-cobordisms}
\begin{Definition}
A topological space $Z$ is \textit{\textbf{pointed}} if every connected component of $Z$ is provided with a base point. Denote the set of base points of $Z$ by $Z_\bullet$.
\end{Definition}

\begin{Definition} 
 A $d$-dimensional \textit{\textbf{$(X,Y)$-manifold}} is a pair $(M,[g])$, where $(M,M_\bullet)$ is a pointed, oriented, $d$-dimensional manifold and $[g] \in [(M, M_\bullet), (X,Y)]_\bullet$ a homotopy class of maps relative to the base points. Particularly, $M$ is allowed to be the empty set and $g$ the empty function. Denote it by $\emptyset_d$ to avoid ambiguities. $g$ is called the \textit{\textbf{characteristic map}}. An \textit{\textbf{$(X,Y)$-homeomorphism of $(X,Y)$-manifolds}} between $(M,[g])$ and $(M',[g'])$ is an orientation preserving homeomorphism $f:(M,M_\bullet)\to (M',M'_\bullet)$ such that $[g]=[g'f]$ in $[(M, M_\bullet), (X,Y)]_\bullet$. 
\end{Definition}

\begin{Definition}
A $(d+1)$-dimensional \textit{\textbf{$(X,Y)$-cobordism}} is a tuple $(W,M_0,M_1,f)$, together with a homotopy class of maps $[g]\in [(W,\partial W), (X,Y)]_{\partial W}$ relative to the boundary $\partial W$. $M_0$ and $M_1$ are disjoint, pointed, closed, oriented, $d$-dimensional $(X,Y)$-manifolds with characteristic maps $[g_0]\in[(M_0,M_{0\bullet}),(X,Y)]_\bullet,$ $[g_1]\in[(M_1,M_{1\bullet}),(X,Y)]_\bullet$ and $W$ is a compact, oriented, $(d+1)$-dimensional manifold with $f=(f_0\coprod f_1): \partial W\to -M_0 \coprod M_1$ an orientation preserving homeomorphisms, where $- M_0$ denotes the manifold $M_0$ with opposite orientation. Further for any representative $g:M\to X$ we have that $[g\circ f_0^{-1}]=[g_0]$ and $[g\circ f_1^{-1}]=[g_1]$ in $[(M_0,M_{0\bullet}),(X,Y)]_\bullet$ and $[(M_0,M_{0\bullet}),(X,Y)]_\bullet$ respectively. An \textit{\textbf{$(X,Y)$-homeomorphism of $(X,Y)$-cobordisms}} is a triple $(F,\phi_0,\phi_1):(W,M_0,M_1,f)\to (W',M'_0,M'_1,f')$, where $F:W\to W'$ is an orientation preserving homeomorphism and $\phi_i: M_i\to M'_i$ are $(X,Y)$-homeomorphisms of $(X,Y)$-manifolds for $i\in \{0,1\}$, such that $(\phi_0\coprod \phi_1)\circ f\circ F|_{\partial W}=f'$ and $[g]=[g'F]$ in $[(W,\partial W), (X,Y)]_\bullet$ .
\end{Definition}

\subsection{Glueing of $(X,Y)$-cobordisms}
Since we work with a homotopy 1-type $X$ glueing of $(X,Y)$-cobordisms poses no problems: Let $(M_0,[g_0]), (M_1,[g_1])$ and $(N,[h])$ be $(X,Y)$-manifolds and fix representatives $g_0:M_0\to X$ and $g_1:M_1\to X$. Let $(W,M_0,N,f)$ and 
$(W',N,M_1,f')$ be $(X,Y)$-cobordisms together with two representatives $g:W\to X$ and $g':W'\to X$. Since $g\circ f^{-1}_1$ and $g'\circ f'^{-1}$ are both representatives of the class $[h]\in [(N,N_\bullet),(X,Y)]_\bullet$ we have a homotopy $T:N\times [0,1] \to X$ such that $T(n,0)=(g\circ f_1^{-1})(n),$ $T(n,1)=(g'\circ (f'_0)^{-1})(n)$ for all $n\in N$ and $T(n_0,t)=y_0$ for base points $n_0 \in N_\bullet$ and $y_0 \in Y$, hence we can glue $W$ to $N\times [0,1]$ from the left via $\pi_0^{-1}f_1$ and glue $W'$ from the right via $f'^{-1}_0\pi_1$, where $\pi=\pi_0\coprod \pi_1:N\times \{0,1\}\to -N\coprod N$. We define this new $(X,Y)$-cobordism by $(W\circ W',M_0,M_1, (f_0,f'_1))$ and the homotopy class of $g\circ g':=g \coprod_{\pi_0^{-1}f_1} T \coprod_{f'^{-1}_0\pi_1} g'$. We call $(W\circ W',[g\circ g'])$ the \textit{glueing} of $W$ and $W'$.

\subsection{The category of $(X,Y)$-cobordisms}

$(X,Y)$-cobordisms form the following category $\Cob^{X,Y}_{d+1}$:
\begin{itemize}
\item Objects are oriented, $d$-dimensional compact $(X,Y)$-manifolds.
\item Morphisms are homeomorphism classes of $d+1$-dimensional $(X,Y)$-cobordism.
\item Composition is given by gluing of $(X,Y)$-cobordisms. Given $(W,M_0,N,f), (W',N,M_1,f')$ their composition is defined to be $(W\circ W',[g\circ g'])$.
\item Identities $\Id_{(M,g)}$  on a given $(M,g)$ an oriented, $d$-dimensional $(X,Y)$-manifold are given by the homeomorphism class of the $(X,Y)$-bordism $(M\times[0,1],M\times 0, M\times 1, \pi|_{M\times \{0,1\}})$ with homotopy class $[g\circ\pi]$ the composition of $g$ and projection on $M$. This might be called the \textit{\textbf{cylinder}} over $(M,g)$.
\end{itemize}
Further, there is a monoidal structure on $\Cob^{X,Y}_{d+1}$:
\begin{itemize}
\item The Tensor product is given by the disjoint union. Given two $(X,Y)$-manifolds $(M,[g]), (N,[g'])$ consider $(M\coprod N, [g\coprod g'])$ and similarly for the disjoint union of $(X,Y)$-bordisms, satisfying associativity constraints.
\item The Monoidal unit is the empty $(X,Y)$-manifold $\emptyset_{d}$, satisfying unitality constraints.
\item With the braiding $\sigma_{MN}: (M\coprod N, [g\coprod g'])\to (N\coprod M, [g'\coprod g])$, with $\sigma_{NM}\sigma_{MN}=\Id_{M\coprod N}$, $\Cob^{X,Y}_{d+1}$ satisfies the conditions of a symmetry of monoidal categories.
\end{itemize}
This symmetric monoidal category is called the \textit{\textbf{category of $(d+1)$-dimensional $(X,Y)$-cobordisms}}.
 
\begin{proposition} \label{rig}
The category $\Cob^{X,Y}_{d+1}$ is rigid. 
\end{proposition}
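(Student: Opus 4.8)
The plan is to realize every object as its own dual up to orientation reversal, via the classical ``bent cylinder'' construction, adapted so that the characteristic maps are respected. By the remark following Proposition~\ref{unirig}, in a symmetric monoidal category a right dual is automatically a left dual, so it suffices to produce a right dual of each object $(M,[g])$. I would take
$$(M,[g])^{*} := (-M,[g]),$$
i.e.\ the same pointed manifold $(M,M_\bullet)$ with reversed orientation and the \emph{same} characteristic map $[g]$ and base points $M_\bullet$.

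For the evaluation and coevaluation I would use the single cylinder $W=M\times[0,1]$ equipped with the characteristic map $[g\circ\pi]$, where $\pi\colon M\times[0,1]\to M$ is the projection, but read as a cobordism in two different ways. Writing $\partial W = M\times\{0\}\ \coprod\ M\times\{1\}$ and fixing the boundary orientation, one end acquires orientation $-M$ and the other $+M$; the cylinder then defines an $(X,Y)$-cobordism
$$\ev_{(M,[g])}\colon (M,[g])\ot(-M,[g]) \to \emptyset_d$$
by reading both ends as incoming boundary, and an $(X,Y)$-cobordism
$$\coev_{(M,[g])}\colon \emptyset_d \to (-M,[g])\ot(M,[g])$$
by reading both ends as outgoing boundary. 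In each case $[g\circ\pi]$ restricts on every boundary component to $[g]$ (on $M$ or on $-M$), so these are well-defined morphisms of $\Cob^{X,Y}_{d+1}$, and the boundary base-point conditions hold automatically because the fixed representative $g$ already sends $M_\bullet$ into $Y$.

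It then remains to verify the two zig-zag equations, say $(\ev_{(M,[g])}\ot (M,[g]))\circ((M,[g])\ot \coev_{(M,[g])})=\Id_{(M,[g])}$ and its mirror. Topologically, the left-hand side is the gluing of two copies of $M\times[0,1]$ (together with identity cylinders) along a common $M$-end, producing the familiar ``S''-shaped cobordism from $(M,[g])$ to $(M,[g])$; there is an orientation-preserving homeomorphism of cobordisms straightening this glued manifold to the single cylinder $M\times[0,1]$, which represents $\Id_{(M,[g])}$. The genuine content, and the main obstacle, is the homotopical bookkeeping: one must check that the characteristic map of the glued cobordism agrees, \emph{as a class relative to the boundary}, with $[g\circ\pi]$ on the straightened cylinder. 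This holds because every piece of the glued map has the form $g\circ(\text{projection})$ and the gluing homotopy $T$ from the definition of composition can be chosen constant in the $X$-direction here, so the straightening homeomorphism carries the glued map to one homotopic rel $\partial W$ to $g\circ\pi$; one also tracks the base points $M_\bullet$ through the gluing to confirm they remain compatible throughout. Since $X$ is a homotopy $1$-type and all maps factor through projections onto $M$, these homotopies are essentially canonical, yielding both zig-zag identities. Hence $(-M,[g])$ is a right dual of $(M,[g])$, and as $(M,[g])$ was arbitrary, $\Cob^{X,Y}_{d+1}$ is rigid.
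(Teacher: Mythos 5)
Your proposal is correct and takes essentially the same approach as the paper: both declare the dual of $(M,[g])$ to be $(-M,[g])$ and use the cylinder $M\times[0,1]$ with characteristic map $[g\circ\pi]$, read once as an evaluation $M\coprod -M\to\emptyset_d$ and once as a coevaluation $\emptyset_d\to -M\coprod M$, and then verify the zig-zag equations by straightening the bent cylinder. Your treatment of the homotopical bookkeeping rel boundary is in fact more detailed than the paper's, which simply asserts that the zig-zag equations hold.
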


This is easily seen by considering the cylinders $M\times [0,1]$ over oriented $d$-dimensional compact $(X,Y)$-manifolds $(M,[g])$ and interpreting them as morphisms $C: \emptyset\to -M \coprod M$ as well as $E: M\coprod -M \to \emptyset$, where $-M$ is the manifold $M$ with opposite orientation, by setting their homotopy class $[g\circ\pi]$ to be the composition of $g$ and projection on $M$. $E$ and $C$ statisfy the zig-zag-equations exhibiting $(-M,[g])$ as the dual of $M$. Furthermore, for any bordism $(W,M_0, M_1, f_0\coprod f_1)$ the cobordism $-W=(-W, -M_1, -M_0, f_1\coprod f_0)$ with the same characteristic map is given by 
$$W^*= (-M_0 \ot \ev_{M_1})(-M_0 \ot W \ot -M_1)(\coev_{M_0}\ot -M_1)$$.

\section{$(X,Y)$-HQFTs and their classification in dimension 1}
In this formalism $d$-dimensional $(X,Y)$-HQFTs can now be defined quite easily with a variable target category. Please note, that there is some ambiguity in the term \textit{target}! In the theory of TQFT it refers to the domain of the functor, whereas in \cite{Tu1} it refers to the space $X$. We make this nomenclature precise. Through this section let $d>0$.
\begin{Definition}
Let $(\Vv,\ot,I,\sigma)$ be a (strict) symmetric, monoidal category. A $d$-dimensional \textit{\textbf{Homotopy Quantum Field Theory}} with \textit{\textbf{target pair of spaces}} $(X,Y)$, or shorter $(X,Y)$-\textit{\textbf{HQFT}} is a symmetric monoidal functor 
$$\Zz:  \Cob^{X,Y}_{d} \to \Vv.$$
$\Vv$ is called its \textit{\textbf{target category}}. 
\end{Definition}


\begin{Definition}
The symmetric monoidal functor category $\sf Fun^{\ot, s}(\Cob^{X,Y}_d,\Vv)$ of $d$-dimensional $(X,Y)$-HQFTs and target space pair $(X,Y)$ and target category $\Vv$ is denoted by $\Qq_d(X,Y,\Vv)$. A morphism between two $(X,Y)$-HQFTs $\Zz$ and $\Zz'$ is hence a monoidal natural transformation $\rho: \Zz \Rightarrow \Zz'$.
\end{Definition}

\begin{remark}
Since $\Cob^{X,Y}_d$ is rigid, all natural transformations between $(X,Y)$-HQFTs are isomorphisms due to \ref{natrig}. This means $\Qq_d(X,Y,\Vv)$ is a groupoid. \\
A natural question to ask is how $(X,Y)$-HQFT's behave under change of target space. Given an $(X,Y)$-HQFT $\Zz$ and a map $f:X\to X'$, then the HQFT can be pushed forward to an $(X',f(Y))$-HQFT via $f_*: \Qq_d(X,Y,\Vv)\to \Qq_d(X',Y,\Vv)$ by composing the characteristic map with $f$. Similarly if we are interested in changing the target category and can always compose with a symmetric monoidal functor $F: \Vv\to \Vv'$ to get a push forward $F_*: \Qq_d(X,Y,\Vv)\to \Qq_d(X,Y,\Vv')$. 
\end{remark}

Although the definition of $\Cob_d^{X,Y}$ and $d$-dimensional $(X,Y)$-HQFTs makes sense for any target pair with $X$ a homotopy 1-type, we impose some light connectivity condition on $(X,Y)$, namely that every connected component of $X$ has non-empty intersection with $Y$. The reason for that is, that we otherwise might have morphisms in $\Cob_d^{X,Y}$ from the empty manifold to the empty manifold, whose characteristic map maps into a component without any point of $Y$ in it. Such a morphism cannot factor through any object different from the empty manifold. Therefore we might not represent is by composition of simple generators. It is therefore reasonable to assume that any connected component of $X$ has a point in $Y$. 

\begin{Definition}
A pair of topological spaces $(X,Y)$ is called \textit{\textbf{admissible}}, if $X$ is a homotopy 1-type and for any connected component $C$ of $X$ we have $C\cap Y\neq \emptyset$.
\end{Definition}

\begin{Definition}
Let $\Gg$ be a category and $\Vv$ a monoidal category. A \textit{\textbf{dualizable representation}} of $\Gg$ in $\Vv$ is a functor $F:\Gg \to \Vv$ such that $Fx$ is dualizable in $\Vv$ for all objects $x$ of $\Gg$. A \textit{\textbf{morphism of dualizable representations}} is a natural transformation $\rho: F\to F'$ such that 
\begin{eqnarray*}
\ev_{F'x}(\rho_x\ot \rho_{x^*})&=&\ev_{Fx}\\
(\rho_{x^*}\ot \rho_x)\coev_{Fx}&=&\coev_{F'x}\\
\end{eqnarray*}
Denote the category of dualizable representations and morphisms of dualizable representations as $\Fun^*(\Gg,\Vv)$.
\end{Definition}

\begin{theorem}
Let $(\Vv,\ot,I,\sigma)$ be a (strict) symmetric monoidal category. Let $(X,Y)$ be an admissible pair of spaces and denote the relative fundamental groupoid of the pair $(X,Y)$ by $\Gg=\Pi_1(X,Y)$. The category $\Qq_1(X,Y,\Vv)$ of 1-dimensional $(X,Y)$-HQFTs 
 is equivalent to the category $\Fun^*(\Gg,\Vv)$ of dualizable representations.
\end{theorem}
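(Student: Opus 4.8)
The plan is to exhibit an explicit equivalence by restricting an HQFT to its values on single points, after first pinning down the structure of $\Cob^{X,Y}_1$. An object there is an oriented compact $0$-manifold, i.e.\ a finite set of points each carrying a sign and a characteristic map to some $y\in Y$; by the symmetric monoidal structure every object is a tensor product of the elementary objects $+y$ (a positively oriented point mapped to $y$) and their orientation-reverses $-y$. The elementary morphisms are the interval cobordisms: an interval from $+y$ to $+z$ carrying a homotopy class of paths rel endpoints is precisely an element $\alpha\in\Pi_1(y,z)$, while an interval whose two boundary points lie on the same side yields the cup and cap exhibiting $-y$ as a dual of $+y$, as in the proof of \ref{rig}. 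Crucially, gluing of path-carrying intervals realises concatenation of paths, matching composition in $\Gg=\Pi_1(X,Y)$.

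With this in hand I would define the restriction functor $R:\Qq_1(X,Y,\Vv)\to\Fun^*(\Gg,\Vv)$ by $R(\Zz)(y)=\Zz(+y)$ on objects and $R(\Zz)(\alpha)=\Zz(I_\alpha)$ on a morphism $\alpha:y\to z$, where $I_\alpha$ is the interval carrying $\alpha$. Functoriality follows from the gluing rule, and since $+y$ is dualizable in $\Cob^{X,Y}_1$ and a symmetric monoidal functor sends a dual pair to a dual pair, $\Zz(+y)$ is dualizable, so $R(\Zz)$ is genuinely a dualizable representation. On a monoidal natural transformation $\rho:\Zz\Rightarrow\Zz'$ I set $R(\rho)_y=\rho_{+y}$; naturality of $\rho$ against the cup and cap cobordisms yields exactly the two $\ev$/$\coev$ compatibility identities in the definition of a morphism of dualizable representations.

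To prove $R$ essentially surjective I would build from a dualizable representation $F:\Gg\to\Vv$ an HQFT $\Zz_F$: on objects $+y\mapsto F(y)$, $-y\mapsto F(y)^*$, extended by $\ot$; on generators $I_\alpha\mapsto F(\alpha)$, the cups and caps to the chosen $\coev$ and $\ev$ of $F(y)$, and an isolated circle labelled by a loop $\gamma\in\Pi_1(y,y)$ to the categorical trace $\Trace (F(\gamma))\in\End_\Vv(I)$. The content is well-definedness: every compact oriented $1$-manifold with boundary is a disjoint union of arcs and circles, and two $(X,Y)$-cobordisms are identified exactly under an orientation-preserving boundary-respecting homeomorphism together with a homotopy of characteristic maps rel boundary. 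I would verify that $\Zz_F$ respects these identifications by decomposing an arbitrary cobordism into the elementary pieces above and checking invariance under the moves relating two decompositions; these moves are generated by the zig-zag equations, by functoriality and homotopy-invariance of $F$ on arcs, and by cyclicity of the trace on circles (encoding conjugation-invariance of $F(\gamma)$). By \ref{unirig} the construction is independent of the chosen duals up to canonical isomorphism, and $R(\Zz_F)\cong F$ by construction. For full faithfulness, a morphism $\phi:R(\Zz)\to R(\Zz')$ extends uniquely to $\tilde\phi:\Zz\Rightarrow\Zz'$ with $\tilde\phi_{+y}=\phi_y$ and $\tilde\phi_{-y}$ forced by the $\ev$/$\coev$ compatibilities; naturality against general cobordisms reduces, via the generators, to naturality against intervals (the naturality of $\phi$) and against cups and caps (the defining identities of a morphism of dualizable representations), and uniqueness is immediate since the value at $+y$ is prescribed. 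That both sides are groupoids, by \ref{natrig}, serves as a consistency check.

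The main obstacle is precisely the well-definedness in the essential-surjectivity step: converting the intuitive ``cut into arcs and circles'' into a rigorous generators-and-relations presentation of $\Cob^{X,Y}_1$ and confirming that $\Zz_F$ is invariant under all relations, especially coordinating the homotopy-rel-boundary equivalence on characteristic maps with the circle/trace terms. The remaining verifications are routine bookkeeping with the rigid and symmetric monoidal structure.
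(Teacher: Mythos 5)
Your proposal is correct and follows essentially the same route as the paper: identify the generators of $\Cob^{X,Y}_1$ (signed points $\pm y$, path-carrying intervals, cups/caps, and the symmetry), restrict an HQFT to these to obtain a dualizable representation of $\Gg$, and conversely rebuild the HQFT from such a representation, with circles handled via the admissibility assumption and the categorical trace. The only difference is one of emphasis — you explicitly flag the well-definedness of the reconstruction (relations among decompositions, conjugation-invariance via cyclicity of the trace) as the main obstacle, a point the paper's proof passes over more briskly — but the underlying argument is the same.
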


\begin{proof}
The objects in $\Cob^{X,Y}_1$ are oriented, 0-dimensional, closed manifolds, hence consist of a finite, disjoint union of oriented points, together with a homotopy class of maps to $Y$ fixing basepoints. One therefore can identify them with points in $Y$ together with an orientation $\in \{+,-\}$. The duality between $+y$ and $-y$ is exhibited by $\ev_y: +y\coprod -y\to \emptyset$ and $\coev_y: \emptyset \to -y \coprod +y$, whose characteristic maps are the constant maps on $y$. Consider $y,z\in Y$ and a paths $g:[0,1]\to X$ from $y$ to $z$, then there exists a morphism from $+y$ to $+z$ given by the interval and characteristic class $[g]$, that we might also denote by $[g]$. We claim that any morphisms in $\Cob^{X,Y}_1$ can be glued from $\sigma, \ev_{y},\coev_{y}$ for any $y\in Y$ and $[g]$ for any $g:y\to z \in \Gg$.\\
\[
\begin{tikzpicture}
\node (a) at (-2,0) {$+y$};
\node (b) at (-1,0) {$+z$};
\node (c) at (-2,-1) {$+z$};
\node (d) at (-1,-1) {$+y$};
\node (1) at (0,0) {$+y$};
\node (2) at (1,0) {$-y$};
\node (3) at (2,-1) {$-y$};
\node (4) at (3,-1) {$+y$};
\node (5) at (4,0) {$+y$};
\node (6) at (5,-1) {$+z$};
\node (7) at (4.5,-0.2) {$g$};
\draw (1) to [out=-90,in=-90] (2);
\draw (3) to [out=90,in=90] (4);
\draw (5) to [out=-90,in=90]  (6);
\draw (a) to  [out=-90,in=90] (d);
\draw (c) to  [out=90,in=-90] (b);
\end{tikzpicture} 
\]

First of all, any morphisms $-y\to -z$ with characteristic map $[g]$ can be represented as $[\bar g]^*=(\Id_{-z}\ot \ev_{+y})(\Id_{-z} \ot [\bar g] \ot \Id_{-y})(\coev_{+z} \ot \Id_{-y})$.
\[
\begin{tikzpicture}
\node (1) at (0,0) {$-y$};
\node (2) at (-2,-2) {$-z$};
\node (0) at (-1.5,0) {};
\node (5) at (-0.5,-2) {};
\node[morphism] (f) at (-1,-1) {$\bar g$};
\draw (1) to [out=-90,in=0] (5.center);
\draw (f.south) to [out=-90,in=180] (5.center);
\draw (0.center) to [out=180, in=90] (2);
\draw (0.center) to [out=0, in=90] (f.north);
\end{tikzpicture} 
\]
Furthermore, any morphism $\emptyset \to y \ot -z$ with characteristic class $[g]$ can be represented as $(+y \ot [\bar g])\coev_z$ and similarly for  $-y \ot +z \to \emptyset$.  Pre- or postcomposing with $\sigma$ yield all morphisms $\emptyset \to -y \ot +z$ and $+y \ot -z \to \emptyset$. Therefore any morphisms with underlying 1-manifold an interval is a composition our generators. For any morphisms with underlying 1-manifold a circle, we can assume by connectivity that the caracteristic maps hits at least on point in $Y$, say $y$. Any circle therefore factors through $\ev_{y}\sigma (\Id_{-y}\ot [g] )\coev_y$, where $g:y\to y$ in $\Gg$. 
Since morphisms in $\Cob^{X,Y}_1$ are finite unions of interval and circles, all morphisms are generated by $\sigma, \ev_{y},\coev_{y}$ for any $y\in Y$ and $[g]$ for any $g:y\to z \in \Gg$.\\
Now given a 1-dimensional $(X,Y)$-HQFT $\Zz$, the functor determines objects $\Zz(+y)$ in $\Vv$ with choice of duals $\Zz(-y)$, $\Zz(\coev_y), \Zz(\ev_y)$ together morphisms 
$$\Zz([g]): \Zz(+y)\to \Zz(+z)$$ 
in $\Vv$ for $g:y\to z$. One checks easily that these form an action of the groupoid $\Gg$ on the objects $\Zz(y)$. \\
Conversely, any such data is sufficient to define a 1-dimensional $(X,Y)$-HQFTs, since $\Zz$ is determined by the value on the morphisms $\ev_y,\coev_y$ and $[g]$.\\
Let now $\rho: \Zz \Rightarrow \Zz'$ be a natural isomorphism between two 1-dimensional $(X,Y)$-HQFTs, hence we get isomorphisms $\rho_{\pm y}: \Zz(\pm y)\to \Zz'( \pm y)$ such that $\ev'_y(\rho_{+y}\ot \rho_{-y})=\ev_y$ and $(\rho_{-y}\ot \rho_{+y})\coev_y=\coev'_y$ and $\rho_z \Zz([g])= \Zz([g])\rho_y$, hence this is the same data than a morphism of dualizable representations.
\end{proof}

\section{Crossed loop Frobenius $(\Gg,\Vv)$-categories}
In this section the main notion in order to classify 2-dimensional $(X,Y)$-HQFTs is defined. These structures should specify to $G$-graded Frobenius algebras 
$$L=\bigoplus_{\alpha\in G} L_\alpha$$
 in the case where $\Gg=G$ a group and $\Vv=\Vect_\KK$. However, since we do not assume the existence of (co)-products in $\Vv$, consider the $L_\alpha$ themselves as the objects. As one can see in the proof of the main theorem, this is actually much more in the spirit of HQFTs.

\begin{Definition}
Let $\Gg$ be a groupoid and  $\Vv$ a (strict) symmetric monoidal category. A \textbf{\textit{$(\Gg,\Vv)$-category}} is a collection of objects $L_\alpha\in \Vv$ for all $\alpha:x\to y$ in the groupoid $\Gg$, together with two families of morphisms in $\Vv$
\begin{eqnarray*}
m_{\alpha,\beta}: L_{\alpha}\ot L_{\beta} \to L_{\alpha \beta} \qquad \alpha:x\to y, \beta:y\to z, x,y,z \in \Gg,
\end{eqnarray*}
called \textit{\textbf{composition}}/\textit{\textbf{multiplication}}, and
\begin{eqnarray*}
j_{x}: I\to L_{1_x} \qquad x \in \Gg, 
\end{eqnarray*}
called \textit{\textbf{unit}}, satisfying associativity:
\[
\xymatrix{
L_{\alpha}\ot L_{\beta} \ot L_\gamma \ar[d]_{L_\alpha \ot m_{\beta,\gamma}}  \ar[rr]^-{m_ {\alpha,\beta} \ot L_ \gamma} & &L_{\alpha\beta} \ot L_\gamma  \ar[d]^{m_{\alpha\beta,\gamma}} &\\
L_{\alpha}\ot L_{\beta\gamma} \ar[rr]_-{m_{\alpha,\beta\gamma}} & & L_{\alpha\beta\gamma}\\
}
\]
for all composable triple of morphisms $\alpha, \beta, \gamma$ in $\Gg$ and unitality:
\[
\xymatrix{
L_{\alpha}\simeq I \ot L_\alpha \ar[rr]^-{j_{x} \ot L_\alpha}  \ar[rrd]_{L_\alpha} & & L_{1_x} \ot L_{\alpha} \ar[d]^{m_{1_x,\alpha}} & L_{\alpha}\simeq L_\alpha \ot I \ar[rr]^-{L_\alpha \ot j_y}  \ar[rrd]_{L_{\alpha}} & &L_{\alpha} \ot L_{1_y} \ar[d]^{m_{\alpha,1_y}}\\
& &L_{\alpha}, & & &L_{\alpha}  \\
}
\]
 for $\alpha:x\to y$ in $\Gg$. \\
A $(\Gg,\Vv)$-\textit{\textbf{functor}} is a collection of morphisms $F_{\alpha}: L_\alpha \to L'_\alpha$ in $\Vv$ commuting with composition and unit:
\[
\xymatrix{
L_{\alpha}\ot L_{\beta} \ar[r]^{F_\alpha \ot F_\beta}  \ar[d]_{m_{\alpha,\beta}} & L'_{\alpha} \ot L'_\beta  \ar[d]^{m'_{\alpha,\beta}} & I \ar[r]^{j_x} \ar[rd]_{j'_{Fx}} & L_{1_x} \ar[d]^{F_{1_x}}\\
L_{\alpha\beta} \ar[r]_{F_{\alpha\beta}} & L'_{\alpha\beta}, & & L'_{1_{Fx}}. \\
}
\]
Denote the category of $(\Gg,\Vv)$-categories as $(\Gg,\Vv)-\Cat$.
\end{Definition}

\begin{remark}
Let $X$ be a class and $\Gg=X\times X$ the groupoid with objects $X$, that has exactely one arrow $x\to y$ for all $x,y\in X$. Such $(X\times X,\Vv)$-categories are precisely categories enriched over $\Vv$, hence, this notion generalizes enriched categories. Note that generally, for given objects $x,y$ in $\Gg$, there are multiple object $L_\alpha$ in $\Vv$, namely for all $\alpha:x\to y$ in $\Gg$, and not just a collection $L_{xy}=\Hom_L(x,y) \in \Vv$ as for enriched categories. Further, note that the objects $x,y$ play only a supporting role by clarifying, which elements can be composed.
\end{remark}

\begin{example}
Given a field $\KK$ and a groupoid $\Gg$. Define $L_\alpha=\KK \langle l_\alpha \rangle$ to be the 1-dimensional vector spaces generated by $l_\alpha$ and $m_{\alpha,\beta}(l_\alpha\ot l_\beta):=l_{\alpha\beta}$ and $j_x(1):=l_{1_x}$. $(L,m,j)$ is a $(\Gg,\Vect_\KK)$-category in analogue to the group algebra for a group $G$. We may therefore call it \textit{\textbf{groupoid algebra}} of $\Gg$ and denote it by $L=\KK[\Gg]$.
\end{example}

\begin{Definition}
Let $\Gg$ be a groupoid and  $(\Vv,\ot,I,\sigma)$ a (strict) symmetric monoidal category. A \textbf{\textit{$(\Gg,\Vv)$-opcategory}} is a $(\Gg,\Vv^{op})$-category $L$.\\
Explicitely, $L$ consists of a collection of objects $L_\alpha\in \Vv$ for all $\alpha:x\to y$ in the groupoid $\Gg$, together with two families of morphisms in $\Vv$:
\begin{eqnarray*}
\Delta_{\alpha,\beta}: L_{\alpha\beta} \to L_{\alpha} \ot L_{\beta} \qquad \alpha:x\to y, \beta:y\to z, x,y,z \in \Gg,
\end{eqnarray*}
called \textit{\textbf{cocomposition}}/\textit{\textbf{comultiplication}}, and
\begin{eqnarray*}
\nu_{x}: L_{1_x}\to I \qquad x \in \Gg, 
\end{eqnarray*}
called \textit{\textbf{counit}}, satisfying coassociativity:
\[
\xymatrix{
L_{\alpha}\ot L_{\beta} \ot L_\gamma & & L_{\alpha\beta} \ot L_\gamma \ar[ll]_-{\Delta_ {\alpha,\beta} \ot L_ \gamma} \\
L_{\alpha}\ot L_{\beta\gamma}\ar[u]^{L_\alpha \ot \Delta_{\beta,\gamma}} & & L_{\alpha\beta\gamma}\ar[ll]^-{\Delta_{\alpha,\beta\gamma}} \ar[u]_{ \Delta_{\alpha\beta,\gamma}} \\
}
\]
for all composable triples of morphisms $\alpha, \beta, \gamma$ in $\Gg$ and counitality
\[
\xymatrix{
L_{\alpha}\simeq I \ot L_\alpha &  & L_{1_x} \ot L_{\alpha} \ar[ll]_-{\nu_{x}\ot L_{\alpha} } & L_{\alpha}\simeq L_\alpha \ot I  &  &L_{\alpha} \ot L_{1_y} \ar[ll]_-{L_\alpha \ot \nu_y} \\
& &L_{\alpha},  \ar[ull]^{L_\alpha}  \ar[u]_{\Delta_{1_x,\alpha}}  & & &L_{\alpha}. \ar[lul]^{L_{\alpha}}  \ar[u]_{\Delta_{\alpha,1_y}}\\
}
\]
for $\alpha:x\to y$ in $\Gg.$ \\
A $(\Gg,\Vv)$-\textit{\textbf{op-functor}} is a collection $F_{\alpha}: L_\alpha \to L'_\alpha$  of maps in $\Vv$ commuting with cocomposition and counit:
\[
\xymatrix{
L_{\alpha}\ot L_{\beta} \ar[r]^{F_\alpha \ot F_\beta}   & L'_{\alpha} \ot L'_\beta  & I  & L_{1_x}  \ar[l]_{\nu_x} \ar[d]^{F_{1_x}}\\
L_{\alpha\beta}\ar[u]^{\Delta_{\alpha,\beta}} \ar[r]_{F_{\alpha\beta}} & L'_{\alpha\beta}, \ar[u]_{\Delta'_{\alpha,\beta}} & & L'_{1_{Fx}}. \ar[ul]^{\nu'_{Fx}}\\
}
\]
Denote the category of $(\Gg,\Vv)$-opcategories as $(\Gg,\Vv)-\opCat$.
\end{Definition}

Frobenius algebras can be defined by two equivalent definitions the existence of an inner product as done in \cite{Tu1} or the categorical definition stating the existence of a compatible comultiplication. In the following generalizations of both are introduced and it is shown, that these are equivalent.

\begin{Definition} \label{Frob1}
Let $\Gg$ be a groupoid and  $(\Vv,\ot,I,\sigma)$ a (strict) symmetric monoidal category. A \textit{\textbf{Frobenius $(\Gg,\Vv)$-category}} is a 5-tuple consisting of a family $(L_\alpha)_{\alpha \in \Gg}$ of objects in $\Vv$ and four families of morphisms $m,j,\Delta,\nu$ in $\Vv$ such that: 
\begin{itemize}
\item[(FC1)] $(L,m,j)$ is an $(\Gg,\Vv)$-category.
\item[(FC2)] $(L,\Delta, \nu)$ is a $(\Gg,\Vv)$-opcategory.
\item[(FC3)] The \textit{\textbf{Frobenius conditions}} are satisfied:
\end{itemize}
\[
\xymatrix{
& L_{\alpha\beta}\ot L_{\gamma} \ar[dl]_{\Delta_{\alpha,\beta} \ot L_\gamma}  \ar[dr]^{m_{\alpha\beta, \gamma} }& & L_{\alpha}\ot L_{\beta\gamma} \ar[dl]_{m_{\alpha,\beta\gamma}} \ar[dr]^{L_\alpha \ot \Delta_{\beta,\gamma}} &\\
L_{\alpha}\ot L_\beta \ot L_\gamma  \ar[dr]_{L_\alpha\ot m_{\beta,\gamma}}& & L_{\alpha\beta\gamma} \ar[dl]^{\Delta_{\alpha,\beta\gamma}}  \ar[dr]_{\Delta_{\alpha\beta,\gamma}} & &L_{\alpha}\ot L_\beta \ot L_\gamma \ar[dl]^{m_{\alpha,\beta} \ot L_\gamma} \\
& L_{\alpha}\ot L_{\beta\gamma} & & L_{\alpha\beta}\ot L_{\gamma}. &
}
  \] 
A \textit{\textbf{Frobenius $(\Gg,\Vv)$-functor}} is a $(\Gg,\Vv)$-functor as well as a $(\Gg,\Vv)$-opfunctor. Denote the category of Frobenius $(\Gg,\Vv)$-categories and -functors by $\Frob^{\Gg}(\Vv)$.
We call $L$ \textit{\textbf{symmetric}}, if for all $\alpha:x\to y$
$$\nu_xm_{\alpha, \alpha^{-1}}=\nu_ym_{\alpha^{-1},\alpha}\sigma: L_{\alpha} \ot L_{\alpha^{-1}}\to I.$$ 
\end{Definition}

Equivalently we can define Frobenius $(\Gg,\Vv)$-categories by defining an inner product: 
\begin{Definition} \label{Frob2}
Let $L$ be a  $(\Gg,\Vv)$-category. A family of morphisms $\eta_{\alpha}: L_\alpha \ot L_{\alpha^{-1}}\to I$ is called an \textit{\textbf{inner product}} or   \textit{\textbf{evaluation}} on $L$ if: 
\begin{itemize}
\item[(IP1)] it is \textit{\textbf{non-degenerate}}, that is, if there exists a family $\coev_{\alpha}: I\to L_{\alpha^{-1}}\ot L_\alpha$ called \textit{\textbf{coevaluation}}, such that
\begin{eqnarray*}
(\eta_\alpha\ot L_\alpha)(L_\alpha \ot \coev_{\alpha})=L_\alpha,\\
(L_{\alpha^{-1}}\ot \eta_\alpha)(\coev_{\alpha}\ot L_{\alpha^{-1}})=L_{\alpha^{-1}}
\end{eqnarray*}
 and 
\item[(IP2)] it is compatibility with the multiplication:
\[
\xymatrix{
L_{\alpha}\ot L_{\beta} \ot L_\gamma \ar[d]_{L_\alpha \ot m_{\beta,\gamma}}  \ar[rr]^{m_ {\alpha,\beta} \ot L_ \gamma} & & L_{\alpha\beta} \ot L_\gamma  \ar[d]^{\eta_{\alpha\beta}} &\\
L_{\alpha}\ot L_{\beta\gamma} \ar[rr]_{\eta_\alpha} & & I, \\
}
\]
\end{itemize}
where $\gamma=(\alpha\beta)^{-1}$, hence $\alpha^{-1}=\beta\gamma.$
The inner product is called \textit{\textbf{symmetric}} if 
$$\eta_\alpha= \eta_{\alpha^{-1}}\sigma: L_\alpha\ot L_{\alpha^{-1}}\to I.$$ 
A \textit{\textbf{Frobenius $(\Gg,\Vv)$-category}} is a $(\Gg,\Vv)$-category $L$ with an inner product. If the inner product is symmetric, $L$ is called a \textit{\textbf{symmetric}} Frobenius $(\Gg,\Vv)$-category.
\end{Definition}

The first conditions states that $L_{\alpha^{-1}}$ it the (right) dual of $L_\alpha$ in $\Vv$ via $\eta_{\alpha}$ and $\coev_\alpha$. The second connects this duality with the $(\Gg,\Vv)$-category  structure on.

\begin{example}
\begin{enumerate}
\item If $\Gg=1$ is the trivial group, with one morphism then every $(1,\Vv)$-category is a Frobenius monoid in $\Vv$.
\item If $\Gg=G$ is a group and $\Vv=\Vect_\KK$, then any $(G,\Vect_\KK)$-category $L$ is equivalent to a Frobenius $G$-algebra $L'$ as defined in \cite{Tu1} given by 
$$L'=\bigoplus_{\alpha \in G} L_\alpha,$$
 with $x\cdot y:= m_{\alpha,\beta}(x\ot y)$ for $x\in L_\alpha, y\in L_\beta$ with $\alpha,\beta \in G$ and $1_{L'}=j_\epsilon(1)\in L_\epsilon$ for $\epsilon \in G$ the neutral element. 
\item The groupoid algebra $L=\KK[\Gg]$ has a $(\Gg,\Vect_\KK)$-opcategory structure via $\Delta_{\alpha,\beta}(l_{\alpha\beta})=l_\alpha \ot l_\beta$ and $\nu(l_{1_x})=1$. Furthermore, the Frobenius conditions are satisfied and therefore $L=\KK[\Gg]$ is a Frobenius $(\Gg,\Vv)$-category, that is symmetric.
\end{enumerate}
\end{example}

\begin{lemma}
Given an inner product, the corresponding coevaluation is unique. Further a Frobenius $(\Gg,\Vv)$-category is symmetric, if and only if  $$\sigma \coev_\alpha = \coev_{\alpha^{-1}}.$$
\end{lemma}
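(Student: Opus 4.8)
The plan is to treat both assertions as statements about the duality data alone: condition (IP1) says exactly that $(L_{\alpha^{-1}},\eta_\alpha,\coev_\alpha)$ exhibits $L_{\alpha^{-1}}$ as a right dual of $L_\alpha$ in $\Vv$, and the multiplication never enters. I will deduce everything from the uniqueness of duals (Proposition~\ref{unirig}) together with the observation, recorded after that proposition, that in a symmetric monoidal category a right dual is simultaneously a left dual.

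For the uniqueness of the coevaluation, suppose $\coev_\alpha$ and $\coev'_\alpha$ both satisfy the zig-zag equations of (IP1) for the same $\eta_\alpha$. Then $(L_{\alpha^{-1}},\eta_\alpha,\coev_\alpha)$ and $(L_{\alpha^{-1}},\eta_\alpha,\coev'_\alpha)$ are two right duals of $L_\alpha$ with the same underlying object and the same evaluation. Proposition~\ref{unirig} produces a unique isomorphism $\phi\colon L_{\alpha^{-1}}\to L_{\alpha^{-1}}$ with $\eta_\alpha(L_\alpha\ot\phi)=\eta_\alpha$ and $\coev'_\alpha=(\phi\ot L_\alpha)\coev_\alpha$; since $\phi=\id$ already solves the first equation, uniqueness forces $\phi=\id$ and hence $\coev'_\alpha=\coev_\alpha$. (Alternatively one runs the snake computation $\coev_\alpha=(L_{\alpha^{-1}}\ot\eta_\alpha\ot L_\alpha)(\coev'_\alpha\ot\coev_\alpha)=\coev'_\alpha$, each equality using one zig-zag.) The same argument with the roles of evaluation and coevaluation exchanged shows, dually, that for a fixed coevaluation the evaluation of a right dual is unique; I will use this variant below.

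For the equivalence I compare two right-dual structures on $L_\alpha$ over $L_{\alpha^{-1}}$. On one hand, (IP1) applied to $\alpha^{-1}$ shows that $(\eta_{\alpha^{-1}},\coev_{\alpha^{-1}})$ makes $L_\alpha=L_{(\alpha^{-1})^{-1}}$ a right dual of $L_{\alpha^{-1}}$. On the other hand, by the remark cited above, the datum $(\eta_\alpha,\coev_\alpha)$ for $L_\alpha$ turns $L_\alpha$ into a right dual of $L_{\alpha^{-1}}$ as well, now through the flipped evaluation $\eta_\alpha\sigma_{L_{\alpha^{-1}},L_\alpha}$ and coevaluation $\sigma_{L_{\alpha^{-1}},L_\alpha}\coev_\alpha$. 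If $L$ is symmetric, i.e. $\eta_\alpha=\eta_{\alpha^{-1}}\sigma_{L_\alpha,L_{\alpha^{-1}}}$, then (using $\sigma^{-1}_{A,B}=\sigma_{B,A}$) $\eta_{\alpha^{-1}}=\eta_\alpha\sigma_{L_{\alpha^{-1}},L_\alpha}$, so these two structures have the same evaluation; the uniqueness of the coevaluation just proved then gives $\sigma_{L_{\alpha^{-1}},L_\alpha}\coev_\alpha=\coev_{\alpha^{-1}}$, which is the claimed identity $\sigma\coev_\alpha=\coev_{\alpha^{-1}}$.

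Conversely, if $\sigma\coev_\alpha=\coev_{\alpha^{-1}}$, the two right-dual structures share the same coevaluation, so the dual uniqueness statement forces equal evaluations, $\eta_\alpha\sigma_{L_{\alpha^{-1}},L_\alpha}=\eta_{\alpha^{-1}}$; precomposing with $\sigma_{L_\alpha,L_{\alpha^{-1}}}$ and cancelling $\sigma_{L_{\alpha^{-1}},L_\alpha}\sigma_{L_\alpha,L_{\alpha^{-1}}}=\id$ yields $\eta_\alpha=\eta_{\alpha^{-1}}\sigma_{L_\alpha,L_{\alpha^{-1}}}$, i.e. symmetry. I expect the only real friction to be bookkeeping: keeping the two braidings $\sigma_{L_\alpha,L_{\alpha^{-1}}}$ and $\sigma_{L_{\alpha^{-1}},L_\alpha}$ distinct, checking that the flipped datum from the remark is indexed at $\alpha$ (not $\alpha^{-1}$), and using symmetry of $\Vv$ to cancel the composite braiding. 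Once the indices are aligned, both implications are immediate from the uniqueness of duality data.
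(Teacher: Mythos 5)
Your proof is correct and takes essentially the same route as the paper: the paper likewise derives the first claim from the uniqueness of duality data (Proposition~\ref{unirig}) and the second from the fact that the flipped pair $(\eta_\alpha\sigma,\ \sigma\coev_\alpha)$ again satisfies the zig-zag equations, so that uniqueness of $\eta$ and $\coev$ gives the equivalence. One small caveat: the uniqueness in Proposition~\ref{unirig} is for isomorphisms satisfying \emph{both} compatibility equations, so the inference ``$\id$ solves the first equation, hence $\phi=\id$'' is not literally licensed by its statement — but your parenthetical snake computation $\coev_\alpha=(L_{\alpha^{-1}}\ot\eta_\alpha\ot L_\alpha)(\coev'_\alpha\ot\coev_\alpha)=\coev'_\alpha$ closes this gap and is in fact the cleaner, self-contained argument.
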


\begin{proof}
The first statement was shown in \ref{unirig}. The second statement follows from the fact that $\eta_X\sigma$ and $\sigma\coev_X$ satisfy the zig-zag-equations. The uniqueness of $\eta$ and $\coev$ shows that $\sigma \coev_\alpha = \coev_{\alpha^{-1}}$, if and only iff $\eta_\alpha \sigma = \eta_{\alpha^{-1}}.$
\end{proof}

\begin{lemma}
Let $L$ be a Frobenius $(\Gg,\Vv)$-category then
\begin{eqnarray}
\eqlabel{lm1} m_{\alpha,\beta} &=&(L_{\alpha\beta} \ot \eta_{(\alpha\beta)^{-1}})(L_{\alpha\beta} \ot L_{(\alpha\beta)^{-1}} \ot m_{\alpha, \beta})(\coev_{(\alpha\beta)^{-1}} \ot L_\alpha \ot L_\beta)\\ 
\eqlabel{lm2} &=&(L_{\alpha\beta} \ot \eta_{\beta^{-1}})(L_{\alpha\beta} \ot m_{(\alpha\beta)^{-1}, \alpha}\ot L_\beta)(\coev_{(\alpha\beta)^{-1}} \ot L_\alpha \ot L_\beta)\\
\eqlabel{lm3} &=&(\eta_{\alpha\beta}\ot L_{\alpha\beta})(m_{\alpha, \beta}\ot L_{(\alpha\beta)^{-1}} \ot L_{\alpha\beta})( L_\alpha \ot L_\beta \ot \coev_{\alpha\beta})\\
\eqlabel{lm4}&=&(\eta_{\alpha}\ot L_{\alpha\beta})(L_{\alpha}\ot m_{\beta, (\alpha\beta)^{-1}} \ot L_{\alpha\beta})(L_\alpha \ot L_\beta \ot \coev_{\alpha\beta})
\end{eqnarray}
\end{lemma}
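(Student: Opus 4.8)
The plan is to split the four identities into two groups. Equations \equref{lm1} and \equref{lm3} are pure duality statements that do not use the Frobenius compatibility: each amounts to inserting a zig-zag on the output strand $L_{\alpha\beta}$ of the multiplication. Equations \equref{lm2} and \equref{lm4} are then obtained from the first two by a single application of the compatibility condition (IP2).

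First I would prove \equref{lm1}. By bifunctoriality of $\ot$ the coevaluation (acting on the first two factors) commutes with $m_{\alpha,\beta}$ (acting on the last two), so the right-hand side can be rewritten, sliding $m_{\alpha,\beta}$ out to the front, as
$$\big[(L_{\alpha\beta} \ot \eta_{(\alpha\beta)^{-1}})(\coev_{(\alpha\beta)^{-1}} \ot L_{\alpha\beta})\big]\circ m_{\alpha,\beta}.$$
The bracketed composite is exactly the second zig-zag equation of (IP1) with index $(\alpha\beta)^{-1}$ (using $((\alpha\beta)^{-1})^{-1}=\alpha\beta$), hence equals $L_{\alpha\beta}$, giving \equref{lm1}. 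Equation \equref{lm3} is the mirror statement: the same interchange on the opposite side reduces its right-hand side to $m_{\alpha,\beta}$ post-composed with $(\eta_{\alpha\beta}\ot L_{\alpha\beta})(L_{\alpha\beta}\ot\coev_{\alpha\beta})$, which is the first zig-zag equation of (IP1) with index $\alpha\beta$, again equal to the identity.

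Next I would derive \equref{lm2} from \equref{lm1}. The only difference between the two right-hand sides is the middle factor, so it suffices to show
$$\eta_{(\alpha\beta)^{-1}}(L_{(\alpha\beta)^{-1}} \ot m_{\alpha,\beta}) = \eta_{\beta^{-1}}(m_{(\alpha\beta)^{-1},\alpha} \ot L_\beta)$$
as morphisms $L_{(\alpha\beta)^{-1}} \ot L_\alpha \ot L_\beta \to I$. This is precisely (IP2) applied to the triple $(\mu,\nu,\rho)=((\alpha\beta)^{-1},\alpha,\beta)$: one checks $\mu\nu=(\alpha\beta)^{-1}\alpha=\beta^{-1}$, so $\rho=\beta=(\mu\nu)^{-1}$, which is exactly the constraint under which (IP2) is stated, and its two sides read off as the two sides above. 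Substituting into \equref{lm1} yields \equref{lm2}. Symmetrically, \equref{lm4} follows from \equref{lm3} by applying (IP2) directly with $(\mu,\nu,\rho)=(\alpha,\beta,(\alpha\beta)^{-1})$, replacing $\eta_{\alpha\beta}(m_{\alpha,\beta} \ot L_{(\alpha\beta)^{-1}})$ by $\eta_\alpha(L_\alpha \ot m_{\beta,(\alpha\beta)^{-1}})$.

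The argument is entirely formal, so I expect no conceptual obstacle; the one point that needs genuine care is the index bookkeeping. In each application I must verify that the composites simplify as claimed, namely $(\alpha\beta)^{-1}\alpha=\beta^{-1}$ and $\beta(\alpha\beta)^{-1}=\alpha^{-1}$ under the convention $\alpha\beta=\beta\circ\alpha$, and that the constraint $\rho=(\mu\nu)^{-1}$ required to invoke (IP2) really holds. This is what guarantees that the data $\eta_{\beta^{-1}}$, $\coev_{(\alpha\beta)^{-1}}$ and the multiplication $m_{(\alpha\beta)^{-1},\alpha}$ are all defined on precisely the objects appearing in the statement, so that the chains of morphisms are composable throughout.
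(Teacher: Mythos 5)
Your proposal is correct and follows essentially the same route as the paper's proof: establish \equref{lm1} (and \equref{lm3}) by sliding $m_{\alpha,\beta}$ past the coevaluation via the interchange law so that $\coev$ and $\eta$ combine into a zig-zag identity from (IP1), then obtain \equref{lm2} from \equref{lm1} (and \equref{lm4} from \equref{lm3}) by one application of the compatibility condition (IP2). Your index bookkeeping, in particular $(\alpha\beta)^{-1}\alpha=\beta^{-1}$ and $\beta(\alpha\beta)^{-1}=\alpha^{-1}$ under the convention $\alpha\beta=\beta\circ\alpha$, is exactly what justifies the steps the paper leaves implicit.
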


\begin{proof}
This proof is an easy deduction of the axiom for the coevaluation. By commutativity of tensor product and composition we can push $\coev$ and $\eta$ together in \equref{lm1} to get
$$(L_{\alpha\beta} \ot \eta_{(\alpha\beta)^{-1}})(\coev_{(\alpha\beta)^{-1}}\ot L_{\alpha\beta}) m_{\alpha, \beta}=m_{\alpha,\beta}.$$
\equref{lm2} follows from \equref{lm1} by compability of the multiplication and inner product. \equref{lm3}, \equref{lm4} can be deduced anagolously. 
\end{proof}

\begin{proposition}
Both definitions (\ref{Frob1} \& \ref{Frob2}) of (symmetric) Frobenius $(\Gg,\Vv)$-categories are equivalent.
\end{proposition}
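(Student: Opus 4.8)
The plan is to produce explicit, mutually inverse passages between the data $(m,j,\Delta,\nu)$ of Definition~\ref{Frob1} and the data $(m,j,\eta,\coev)$ of Definition~\ref{Frob2}, leaving the underlying $(\Gg,\Vv)$-category $(L,m,j)$ untouched. Writing $\alpha\colon x\to y$, so that $\alpha\alpha^{-1}=1_x$ and $\alpha^{-1}\alpha=1_y$, I would set
\[
\eta_\alpha:=\nu_x\,m_{\alpha,\alpha^{-1}}\colon L_\alpha\ot L_{\alpha^{-1}}\to I,\qquad \coev_\alpha:=\Delta_{\alpha^{-1},\alpha}\,j_y\colon I\to L_{\alpha^{-1}}\ot L_\alpha,
\]
and, in the reverse direction,
\[
\nu_x:=\eta_{1_x}(L_{1_x}\ot j_x)\colon L_{1_x}\to I,\qquad \Delta_{\alpha,\beta}:=(m_{\alpha\beta,\beta^{-1}}\ot L_\beta)(L_{\alpha\beta}\ot\coev_\beta)\colon L_{\alpha\beta}\to L_\alpha\ot L_\beta.
\]
The index arithmetic makes all four morphisms typecheck, and every verification below is a short manipulation best drawn in the graphical calculus.

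Going from Definition~\ref{Frob1} to Definition~\ref{Frob2}, the compatibility (IP2) is almost immediate: with $\gamma=(\alpha\beta)^{-1}$ one has $\alpha^{-1}=\beta\gamma$, so $\eta_{\alpha\beta}=\nu\,m_{\alpha\beta,\gamma}$ and $\eta_\alpha=\nu\,m_{\alpha,\beta\gamma}$, and the required square is precisely the associativity axiom of $(L,m,j)$ with $\nu$ postcomposed. The substance is the non-degeneracy (IP1). Expanding the first zig-zag gives
\[
(\eta_\alpha\ot L_\alpha)(L_\alpha\ot\coev_\alpha)=(\nu_x\ot L_\alpha)(m_{\alpha,\alpha^{-1}}\ot L_\alpha)(L_\alpha\ot\Delta_{\alpha^{-1},\alpha})(L_\alpha\ot j_y),
\]
and the Frobenius condition (FC3) rewrites the middle block $(m_{\alpha,\alpha^{-1}}\ot L_\alpha)(L_\alpha\ot\Delta_{\alpha^{-1},\alpha})$ as $\Delta_{1_x,\alpha}\,m_{\alpha,1_y}$. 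Counitality of $\nu$ collapses $(\nu_x\ot L_\alpha)\Delta_{1_x,\alpha}$ to the identity, unitality of $j$ collapses $m_{\alpha,1_y}(L_\alpha\ot j_y)$ to the identity, and the composite is $L_\alpha$; the second zig-zag is entirely symmetric. Thus $(L,m,j,\eta,\coev)$ satisfies (IP1) and (IP2).

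For the converse I would check in turn that $(L,\Delta,\nu)$ is a $(\Gg,\Vv)$-opcategory and that (FC3) holds. Counitality reduces, after inserting the definitions, to the identity $\nu_x m_{\alpha,\alpha^{-1}}=\eta_\alpha$ (itself an easy consequence of (IP2) and unitality) followed by a single zig-zag from (IP1); coassociativity comes from sliding a second $\coev$ through the first and applying associativity of $m$ together with (IP1). For the Frobenius conditions, substituting the definition of $\Delta$ turns one half of the hexagon directly into the associativity axiom of $m$ — the coevaluation wire simply rides along untouched. The other half requires first identifying the right-handed formula above with its left-handed counterpart $(L_\alpha\ot m_{\alpha^{-1},\alpha\beta})(\coev_{\alpha^{-1}}\ot L_{\alpha\beta})$; this is the standard Frobenius lemma, proven with (IP2) and the zig-zag, and the sliding identities \equref{lm1}--\equref{lm4} are exactly what I would quote to carry it out. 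Once the two formulas agree, the remaining Frobenius relation is again pure associativity.

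It remains to see the two passages are mutually inverse and to match the symmetric refinements. Starting from $(\Delta,\nu)$, the reconstructed counit is $\nu'_x=\nu_x m_{1_x,1_x}(L_{1_x}\ot j_x)=\nu_x$ by unitality, and the reconstructed comultiplication satisfies $\Delta'_{\alpha,\beta}=\Delta_{\alpha,\beta}m_{\alpha\beta,1_z}(L_{\alpha\beta}\ot j_z)=\Delta_{\alpha,\beta}$ by (FC3) and unitality (for $\beta\colon y\to z$). In the other order one computes $\eta'_\alpha=\eta_\alpha$ from (IP2) and unitality, and then $\coev'=\coev$ because the coevaluation attached to a fixed inner product is unique by Proposition~\ref{unirig}. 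Finally the symmetry conditions correspond verbatim: under $\eta_\alpha=\nu_x m_{\alpha,\alpha^{-1}}$, the equality $\nu_x m_{\alpha,\alpha^{-1}}=\nu_y m_{\alpha^{-1},\alpha}\sigma$ of Definition~\ref{Frob1} is exactly $\eta_\alpha=\eta_{\alpha^{-1}}\sigma$ of Definition~\ref{Frob2}. The main obstacle I expect is not any single deep step but the relabelling discipline: every strand carries a groupoid element, each multiplication and comultiplication is defined only on composable labels, and one must track at every rebracketing that sources and targets still compose — the two genuinely non-formal ingredients being the Frobenius relation (FC3) and the zig-zag (IP1), which play off against one another in the two directions.
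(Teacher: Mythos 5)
Your proposal is correct and takes essentially the same route as the paper's proof: the same formulas $\eta_\alpha=\nu_x m_{\alpha,\alpha^{-1}}$ and $\coev_\alpha=\Delta_{\alpha^{-1},\alpha}j_y$ in one direction, and in the other direction $\nu$ built from $\eta$ and $j$ together with $\Delta$ built from $m$ and $\coev$ (the paper starts from the left-handed expression and you from the right-handed one, but both arguments need the identity between the two, proved via \equref{lm1}--\equref{lm4}), with the zig-zags following from (FC3) plus (co)unitality exactly as you describe. Your extra verification that the two passages are mutually inverse goes slightly beyond what the paper records (the paper only proves the identity $\nu_x m_{\alpha,\alpha^{-1}}=\eta_\alpha$, which it uses for the symmetry statement), and is a harmless strengthening rather than a different method.
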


\begin{proof}
Let $L$ be a Frobenius $(\Gg,\Vv)$-category in the sense of \ref{Frob1}. We will show that there is an inner product on $L$. For $\alpha:x\to y$ define $\eta_\alpha:=\nu_x m_{\alpha,\alpha^{-1}}$. The proof works in analogue to the one-object case, but making sure we respect codomains and domains of arrows. 
\begin{itemize}
\item $\eta_\alpha$ is symmetric if and only if $L$ is a symmetric Frobenius $(\Gg,\Vv)$-category.
\item Non-degeneracy condition: Define $\coev_\alpha:=\Delta_{\alpha^{-1},\alpha}j_y$. Then 
\begin{eqnarray*}
(L_ {\alpha^{-1}} \ot \eta_{\alpha})(\coev_{\alpha}\ot L_{\alpha^{-1}})&=&
(L_{\alpha^{-1}} \ot \nu_x)(L_\alpha \ot m_{\alpha, \alpha{-1}} )(\Delta_{\alpha^{-1},\alpha} \ot L_\alpha) (j_y \ot L_\alpha)\\
&=&(L_{\alpha^{-1}} \ot \nu_x)\Delta_{\alpha^{-1},1_x} m_{1_y,\alpha^{-1}}(j_y \ot L_{\alpha^{-1}})=L_{\alpha^{-1}},\\
(\eta_{\alpha} \ot L_\alpha)(L_\alpha \ot \coev_{\alpha})&=&(\nu_x \ot L_\alpha)( m_{\alpha,\alpha^{-1}}\ot L_\alpha)(L_\alpha \ot \Delta_{\alpha^{-1},\alpha})(L_\alpha\ot  j_y)\\
&=&(\nu_x \ot L_\alpha)\Delta_{1_x,\alpha}m_{\alpha,1_y}(L_\alpha\ot  j_y)=L_{\alpha},
\end{eqnarray*}
where we used the Frobenius conditions as well as (co)-unitality.
\item For the compatibility with the multiplication we can look at the following commuting diagram:
\[
\xymatrix{
L_{\alpha}\ot L_{\beta} \ot L_\gamma \ar[d]_{L_\alpha \ot m_{\beta,\gamma}}  \ar[rr]^{m_ {\alpha,\beta} \ot L_ \gamma} & & L_{\alpha\beta} \ot L_\gamma  \ar[d]^{m_{\alpha\beta,\gamma}} \ar@/^2.0pc/[ddr]^{\eta_{\alpha\beta}} &\\
L_{\alpha}\ot L_{\beta\gamma} \ar[rr]_{m_{\alpha,\beta\gamma}} \ar@/_2.0pc/[drrr]_{\eta_{\alpha} } & & L_{\alpha\beta\gamma}=L_{1_x} \ar[rd]^{\nu_x} & \\
& && I.
}
\]
The square commutes by associativity of the multiplication and the two triagles commute per definitionem.
\end{itemize}
Therefore $\eta$ is an inner product on the $(\Gg,\Vv)$-category $L$, making it is Frobenius in terms of \ref{Frob2}. Further $L$ is symmetric if the inner product is symmetric.\\
Now let $L$ be a Frobenius $(\Gg,\Vv)$-category in the sense of \ref{Frob2}. We need to construct a comultiplication and a counit. We define 
\begin{eqnarray*}
\Delta_{\alpha,\beta}&:=&(L_\alpha\ot m_{\alpha^{-1},\alpha\beta})(\coev_{\alpha^{-1}} \ot L_{\alpha\beta}): L_{\alpha\beta}\to L_\alpha \ot L_\beta,\\
\nu_x&:=&\eta_{1_x} (j_x\ot L_{1_x}):L_{1_x}\to I.
\end{eqnarray*}
First note that $\Delta_{\alpha,\beta}=(m_{\alpha\beta,\beta^{-1}} \ot L_\beta)( L_{\alpha\beta}\ot \coev_{\beta})$ by using \equref{lm2} and \equref{lm3}:
\begin{eqnarray*}
\Delta_{\alpha,\beta}&=&(L_\alpha\ot m_{\alpha^{-1},\alpha\beta})(\coev_{\alpha^{-1}} \ot L_{\alpha\beta}) \\
&=&(L_\alpha \ot \eta_{\beta}\ot L_{\beta})(L_\alpha \ot m_{\alpha^{-1}, \alpha\beta}\ot L_{\beta^{-1}} \ot L_{\beta})(L_\alpha \ot L_{\alpha^{-1}} \ot L_{\alpha\beta} \ot \coev_{\beta})(\coev_{\alpha^{-1}} \ot L_{\alpha\beta})\\
&=&(L_\alpha \ot \eta_{\beta}\ot L_{\beta})(L_\alpha \ot m_{\alpha^{-1}, \alpha\beta}\ot L_{\beta^{-1}} \ot L_{\beta})(\coev_\alpha \ot L_{\alpha\beta} \ot L_{\beta^{-1}} \ot L_{\beta} )(L_{\alpha\beta}\ot \coev_{\beta})\\
&=&(m_{\alpha\beta,\beta^{-1}} \ot L_\beta)( L_{\alpha\beta}\ot \coev_{\beta}).
\end{eqnarray*}
Using this identity, we show coassociativity:
\begin{eqnarray*}
(\Delta_{\alpha,\beta}\ot L_\gamma) \Delta_{\alpha\beta,\gamma}
&=&(L_\alpha\ot m_{\alpha^{-1},\alpha\beta}\ot L_\gamma)(\coev_{\alpha^{-1}} \ot L_{\alpha\beta}\ot L_\gamma)(m_{\alpha\beta\gamma,\gamma^{-1}} \ot L_\gamma)( L_{\alpha\beta\gamma}\ot \coev_{\gamma})\\
&=&(L_\alpha\ot m_{\alpha^{-1},\alpha\beta}\ot L_\gamma)(L_\alpha \ot L_{\alpha^{-1}} \ot m_{\alpha\beta\gamma,\gamma^{-1}} \ot L_\gamma)( \coev_{\alpha^{-1}} \ot L_{\alpha\beta\gamma}\ot \coev_{\gamma})\\
&=&(L_\alpha \ot m_{\beta\gamma,\gamma^{-1}}\ot L_\gamma)(L_\alpha \ot m_{\alpha^{-1},\alpha\beta\gamma} \ot L_\gamma^{-1} \ot L_\gamma)( \coev_{\alpha^{-1}} \ot L_{\alpha\beta\gamma}\ot \coev_{\gamma})\\
&=&(L_\alpha \ot m_{\beta\gamma,\gamma^{-1}}\ot L_\gamma)(L_\alpha \ot L_{\beta\gamma} \ot \coev_{\gamma})(L_\alpha \ot m_{\alpha^{-1},\alpha\beta\gamma})(\coev_{\alpha^{-1}} \ot L_{\alpha\beta\gamma})\\
&=&(L_\alpha \ot \Delta_{\beta,\gamma}) \Delta_{\alpha,\beta\gamma},
\end{eqnarray*}
and counitality:
\begin{eqnarray*}
(\nu_x\ot L_\alpha)\Delta_{1_x,\alpha}&=&(\eta_{1_x,1_x} \ot L_\alpha )(j_x\ot L_{1_x}\ot L_\alpha)(L_{1_x} \ot m_{1_x,\alpha})(\coev_{1_x} \ot L_{\alpha})\\
&=&m_{1_x,\alpha}(\eta_{1_x,1_x}\ot L_{1_x} \ot L_\alpha )(L_{1_x} \ot \coev_{1_x} \ot L_{\alpha})(j_x \ot L_\alpha) \\
&=&m_{1_x,\alpha} (j_x \ot L_\alpha) =L_\alpha,\\
\end{eqnarray*}
and analogously for the right counitality. $(L,\Delta,\nu)$ therefore forms a $(\Gg,\Vv)$-opcategory.\\
We finish by checking the Frobenius conditions:
\begin{eqnarray*}
(L_\alpha \ot m_{\beta,\gamma})(\Delta_{\alpha,\beta}\ot L_ \gamma)&=&(L_\alpha \ot m_{\beta,\gamma})(L_\alpha\ot m_{\alpha^{-1},\alpha\beta}\ot L_\gamma)(\coev_{\alpha^{-1}} \ot L_{\alpha\beta}\ot L_ \gamma)\\
&=&(L_\alpha \ot m_{\alpha^{-1},\alpha\beta\gamma})(L_\alpha\ot L_{\alpha^{-1}}\ot m_{\alpha\beta,\gamma})(\coev_{\alpha^{-1}} \ot L_{\alpha\beta}\ot L_ \gamma)\\
&=&(L_\alpha \ot m_{\alpha^{-1},\alpha\beta\gamma})(\coev_{\alpha^{-1}} \ot m_{\alpha\beta,\gamma})=\Delta_{\alpha,\beta\gamma}m_{\alpha\beta,\gamma}
\end{eqnarray*}
\begin{eqnarray*}
(m_{\alpha,\beta} \ot L_\gamma)(L_\alpha\ot \Delta_{\beta,\gamma})&=&(m_{\alpha,\beta} \ot L_\gamma)(L_\alpha\ot m_{\beta\gamma,\gamma^{-1}} \ot L_\gamma)(L_\alpha \ot L_{\beta\gamma}\ot \coev_{\gamma})\\
&=&(m_{\alpha\beta\gamma,\gamma^{-1}} \ot L_\gamma)(m_{\alpha,\beta\gamma} \ot L_{\gamma^{-1}} \ot L_\gamma)(L_\alpha \ot L_{\beta\gamma}\ot \coev_{\gamma})\\
&=&(m_{\alpha\beta\gamma,\gamma^{-1}} \ot L_\gamma)(m_{\alpha,\beta\gamma} \ot \coev_{\gamma})=\Delta_{\alpha\beta,\gamma}m_{\alpha,\beta\gamma},
\end{eqnarray*}
where we used associativity. 
Therefore $L$ forms a Frobenius $(\Gg,\Vv)-$category in the sense of of \ref{Frob1}. Let $\alpha:x\to y$, then
\begin{equation*}
\nu_x m_{\alpha,\alpha^{-1}}=\eta_{1_x} (j_x \ot m_{\alpha,\alpha^{-1}} )=\eta_{\alpha} (m_{1_x,\alpha}\ot  L_{\alpha^{-1}})(j_x \ot L_{\alpha} \ot L_{\alpha^{-1}})=\eta_{\alpha},
\end{equation*}
by compatibility of multiplication and inner product. Hence $L$ is symmetric if and only if $\eta$ is. 
\end{proof}

We are finally able to define crossed loop Frobenius $(\Gg,\Vv)$-categories and therefore introduce the notion of crossings. By reasons explained in section \ref{2D1} these crossings are defined for all paths, but only act on loops $\alpha: x\to x$ in $\Gg$. This will be the main definition in classifying 2-dimensional $(X,Y)$-HQFTs.

\begin{Definition} \label{trace}
Given a Frobenius $(\Gg,\Vv)$-category $L$ and a morphism $f:L_\alpha \ot L_\beta \to L_\beta$ in $\Vv$ for any $\alpha,\beta \in \Gg$. Its \textit{\textbf{partial trace}} is defined as 
$$\Trace_\beta(f):=\eta_{\beta} (f\ot L_{\beta^{-1}})(L_\alpha \ot \coev_{\beta^{-1}}): L_\alpha \to I.$$
\[
\begin{tikzpicture}
\node (1) at (0,0) {$L_\alpha$};
\node[morphism] (f) at (0,-1) {$\Trace_\beta(f)$};
\draw (1) to [out=-90,in=90] (f.north);
\node (1) at (2,-1) {$=$};
\node (5) at (1,-2) {};
\end{tikzpicture} 
\begin{tikzpicture}
\node (1) at (0.2,0) {$L_\alpha$};
\node (0) at (1.5,0) {};
\node (5) at (1,-2) {};
\node[morphism] (f) at (0.5,-1) {$f$};
\draw (1) to [out=-90,in=90] (f.north west);
\draw (f.south) to [out=-90,in=180] (5.center);
\draw (0.center) to [out=0, in=0] (5.center);
\draw (0.center) to [out=180, in=90] (f.north east);
\end{tikzpicture} 
\]
\end{Definition}

\begin{Definition} \label{loopFrob}
Given a groupoid $\Gg$, consider the totally disconnected groupoid of loops $\Gg_0$. It has the same objects, but only endomorphisms $\alpha:x\to x$. A \textit{\textbf{loop Frobenius $(\Gg,\Vv)$-category}} is a Frobenius $(\Gg_0,\Vv)$-category.
A loop Frobenius $(\Gg,\Vv)$-category is called \textit{\textbf{crossed}}, if there is a family of morphisms called the \textit{\textbf{crossing}} $\phi^\alpha_\beta: L_\alpha\to L_{\beta\alpha\beta^{-1}}$ for all $\alpha:x \to x \in \Gg_0$ and $\beta: y\to x \in \Gg$ such that:
\begin{itemize}
\item[(LF1)] They respect the $(\Gg_0,\Vv)$-category and $(\Gg_0,\Vv)$-opcategory structure:
\begin{eqnarray*}
\phi^{\alpha\alpha'}_\beta m_{\alpha,\alpha'} = m_{\beta \alpha \beta^{-1},\beta \alpha' \beta^{-1}} (\phi^\alpha_\beta \ot \phi^{\alpha'}_\beta), & \phi^{1_x}_\beta j_x=j_y,\\
(\phi^\alpha_\beta \ot \phi^{\alpha'}_\beta) \Delta_{\alpha\alpha'} = \Delta_{\beta\alpha\beta^{-1},\beta\alpha'\beta^{-1}} \phi^{\alpha\alpha'}_\beta, & \nu_y \phi^{1_x}_\beta =\nu_x,
\end{eqnarray*}
for all $\alpha,\alpha':x\to x$ and $\beta:y\to x$.
\item[(LF2)] They are compatible composition:
\begin{eqnarray*}
&\phi^{\beta\alpha\beta^{-1}}_{\gamma} \phi^{\alpha}_{\beta}= \phi^\alpha_{\gamma\beta}, & \phi_{1_x}^\alpha=L_\alpha,
\end{eqnarray*}
for all $\alpha:x\to x$ and $\beta:y\to x, \gamma:z\to y$.
\item[(LF3)] The \textit{\textbf{crossed conditions}} are satisfied:
\begin{eqnarray*}
m_{\beta,\alpha}\sigma=m_{\beta\alpha\beta^{-1},\beta
}(\phi_{\beta}^\alpha\ot L_\beta)&:& L_\alpha\ot L_\beta \to L_{\beta\alpha} \qquad \forall \alpha: x\to x, \beta: x\to x ,\\
\phi_{\alpha}^\alpha=L_\alpha&:& L_\alpha \to L_\alpha \qquad \forall \alpha:x\to x,\\
\Trace_\alpha(m_{\alpha\beta(\beta\alpha)^{-1},\alpha}(L_{\alpha\beta\alpha^{-1}\beta^{-1}} \ot \phi_{\beta}^\alpha))&=&\Trace_\beta(\phi_{\alpha^{-1}}^{\alpha\beta\alpha^{-1}}m_{\alpha\beta(\beta\alpha)^{-1},\beta}( L_{\alpha\beta\alpha^{-1}\beta^{-1}} \ot L_\beta)),
\end{eqnarray*}
for all $\alpha, \beta: x\to x$.
\end{itemize}
A \textit{\textbf{crossed loop Frobenius $(\Gg,\Vv)$-functor}} $F:L\to L'$ is a Frobenius $(\Gg_0,\Vv)$-functor, such that $F(\phi_\beta^\alpha)=\phi'^\alpha_\beta$. Denote the category of loop Frobenius $(\Gg,\Vv)$-categories and -functors by $\sf Frob_{\circ}^{\Gg}(\Vv)$.
\end{Definition}

From this definition general properties of crossed loop Frobenius $(\Gg,\Vv)$-categories can already be deduced. 

\begin{corollary}
For $\alpha:x\to x$ the operation $m_{\alpha,\alpha}: L_\alpha\ot L_\alpha\to L_{\alpha\alpha}$  is commutative. Particularly, $L_\alpha$ is a commutative monoid for any idempotent $\alpha=\alpha^2.$ \\
For $\alpha=1_x$ and $G:=\Gg(x,x)$ there is a left action of $G$ on $L_{1_x}$ by $\{\phi^{1_x}_{\beta}\}_{\beta \in G}$
\end{corollary}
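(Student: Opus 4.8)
The plan is to obtain all three assertions by directly specialising the axioms (LF1)–(LF3), so that essentially no computation is required; the only real work is tracking the two composition conventions in play (juxtaposition in $\Gg$, which is diagrammatic, versus ordinary composition of morphisms in $\Vv$). For commutativity of $m_{\alpha,\alpha}$ I would apply the first crossed condition of (LF3) with $\beta$ set equal to $\alpha$. Since $\Gg(x,x)$ is a group we have $\alpha\alpha\alpha^{-1}=\alpha$, so that condition becomes $m_{\alpha,\alpha}\sigma = m_{\alpha,\alpha}(\phi^\alpha_\alpha\ot L_\alpha)$. The second crossed condition gives $\phi^\alpha_\alpha = L_\alpha$, so $\phi^\alpha_\alpha\ot L_\alpha$ is the identity of $L_\alpha\ot L_\alpha$ and hence $m_{\alpha,\alpha}\sigma = m_{\alpha,\alpha}$, which is exactly commutativity.

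For the ``particularly'' clause I would first note that an idempotent $\alpha=\alpha^2$ in a groupoid is forced to be an identity: precomposing $\alpha\alpha=\alpha$ with $\alpha^{-1}$ yields $\alpha=1_x$. Thus $L_\alpha = L_{1_x}$, and the pair $(m_{1_x,1_x}\colon L_{1_x}\ot L_{1_x}\to L_{1_x},\ j_x\colon I\to L_{1_x})$ is a monoid in $\Vv$: associativity and two-sided unitality are precisely the $(\Gg_0,\Vv)$-category axioms of (FC1) specialised to $\alpha=\beta=\gamma=1_x$, while commutativity is the $\alpha=1_x$ instance of the previous paragraph. Note this also explains why idempotency is the relevant hypothesis: it is what places the codomain of $m_{\alpha,\alpha}$ back in $L_\alpha$ and, via $\alpha=1_x$, makes the unit $j_x$ available.

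For the $G$-action, with $\alpha=1_x$ and $G=\Gg(x,x)$, I would first check that each $\phi^{1_x}_\beta$ is an endomorphism of $L_{1_x}$: its target is $L_{\beta 1_x\beta^{-1}}=L_{1_x}$ because $\beta 1_x\beta^{-1}=1_x$ for $\beta\in G$. The identity axiom is $\phi^{1_x}_{1_x}=L_{1_x}$, which is the second equation of (LF2). The composition law $\phi^{1_x}_{\gamma}\,\phi^{1_x}_{\beta}=\phi^{1_x}_{\gamma\beta}$ is the first equation of (LF2) with $\alpha=1_x$, again using $\beta 1_x\beta^{-1}=1_x$. Hence $\beta\mapsto\phi^{1_x}_\beta$ is a monoid homomorphism $G\to\End(L_{1_x})$, i.e. a left action (in fact valued in $\Aut(L_{1_x})$, with $\phi^{1_x}_{\beta^{-1}}$ inverse to $\phi^{1_x}_\beta$).

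The hard part will not be any calculation but the bookkeeping of conventions: one must consistently use that juxtaposition $\gamma\beta$ in $\Gg$ denotes the diagrammatic composite whereas $\phi^{1_x}_{\gamma}\,\phi^{1_x}_{\beta}$ denotes ordinary composition in $\Vv$, and then verify that under this convention the identity $\phi^{1_x}_{\gamma}\,\phi^{1_x}_{\beta}=\phi^{1_x}_{\gamma\beta}$ furnished by (LF2) really is a \emph{left} action of $(G,\cdot)$ with $g\cdot h=\gamma\beta$, rather than a right action. Everything else is formal, and in particular none of the three parts requires the delicate trace condition (the third equation of (LF3)); they use only the first two crossed conditions and the functoriality laws (LF2).
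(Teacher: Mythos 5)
Your proof is correct: the paper states this corollary without any proof, presenting it as an immediate consequence of Definition \ref{loopFrob}, and your specializations of the axioms — the first crossed condition of (LF3) with $\beta=\alpha$ combined with $\phi^\alpha_\alpha=L_\alpha$ for commutativity, and (LF2) with $\alpha=1_x$ for the $G$-action — are exactly the intended deductions, including the correct handling of the juxtaposition convention that makes $\beta\mapsto\phi^{1_x}_\beta$ a left action. Your observation that an idempotent in a groupoid is necessarily an identity, so the ``idempotent'' clause reduces to $L_{1_x}$ where the unit $j_x$ is available, is a genuine clarification of the statement as written.
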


\begin{proposition}
Any crossed loop Frobenius $(\Gg,\Vv)$-category is symmetric as a Frobenius $(\Gg_0,\Vv)$-category.
\end{proposition}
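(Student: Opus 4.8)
The plan is to reduce symmetry to a statement about the inner product and then extract that statement directly from the crossed conditions. By Definition \ref{Frob2} and the equivalence of the two descriptions of a Frobenius $(\Gg,\Vv)$-category, it suffices to show that the inner product $\eta_\alpha = \nu_x m_{\alpha,\alpha^{-1}}$ of the underlying Frobenius $(\Gg_0,\Vv)$-category is symmetric, i.e. that $\eta_\alpha = \eta_{\alpha^{-1}}\sigma$ for every loop $\alpha\colon x\to x$. Since $\eta_{\alpha^{-1}}\sigma = \nu_x m_{\alpha^{-1},\alpha}\sigma$, the entire goal is the identity $\nu_x m_{\alpha,\alpha^{-1}} = \nu_x m_{\alpha^{-1},\alpha}\sigma$.

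First I would rewrite the right-hand side using the first crossed condition (LF3). Instantiating it with the two loops $\alpha$ and $\alpha^{-1}$ based at $x$, so that the source object is $L_\alpha\ot L_{\alpha^{-1}}$ and the conjugation $\alpha^{-1}\alpha(\alpha^{-1})^{-1}$ collapses to $\alpha$, yields
\[
m_{\alpha^{-1},\alpha}\,\sigma = m_{\alpha,\alpha^{-1}}\bigl(\phi^\alpha_{\alpha^{-1}}\ot L_{\alpha^{-1}}\bigr),
\]
where $\phi^\alpha_{\alpha^{-1}}\colon L_\alpha \to L_{\alpha^{-1}\alpha\alpha}=L_\alpha$ is an endomorphism of $L_\alpha$. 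Postcomposing with $\nu_x$ then reduces the whole problem to showing that $\phi^\alpha_{\alpha^{-1}} = L_\alpha$.

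The key step is precisely this identification of $\phi^\alpha_{\alpha^{-1}}$ with the identity. I would apply the composition law (LF2), namely $\phi^{\beta\alpha\beta^{-1}}_{\gamma}\,\phi^\alpha_\beta = \phi^\alpha_{\gamma\beta}$, taking $\beta=\alpha$ and $\gamma=\alpha^{-1}$. Under the convention $\alpha\beta=\beta\circ\alpha$ one has $\alpha\alpha\alpha^{-1}=\alpha$ and $\gamma\beta=\alpha^{-1}\alpha=1_x$, so the law reads $\phi^\alpha_{\alpha^{-1}}\,\phi^\alpha_\alpha = \phi^\alpha_{1_x} = L_\alpha$, the last equality being the normalization in (LF2). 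The second crossed condition (LF3) gives $\phi^\alpha_\alpha = L_\alpha$, whence the remaining factor must be the identity, $\phi^\alpha_{\alpha^{-1}} = L_\alpha$. Substituting back gives $\eta_{\alpha^{-1}}\sigma = \eta_\alpha(\phi^\alpha_{\alpha^{-1}}\ot L_{\alpha^{-1}}) = \eta_\alpha$, which is exactly the symmetry of the inner product.

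The only genuine obstacle is bookkeeping: one must instantiate both the crossed condition and the composition law with the correct loops and track the conjugations (e.g.\ $\alpha^{-1}\alpha(\alpha^{-1})^{-1}=\alpha$ and $\alpha\alpha\alpha^{-1}=\alpha$) under the chosen composition convention; once the indices are matched the algebra is immediate. I would close by noting that, via the earlier Lemma characterizing symmetry through $\sigma\coev_\alpha = \coev_{\alpha^{-1}}$, the same conclusion transports to the comultiplication/counit formulation, so $L$ is symmetric as a Frobenius $(\Gg_0,\Vv)$-category in either description.
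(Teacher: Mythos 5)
Your proof is correct and takes essentially the same route as the paper: both arguments establish $\phi^\alpha_{\alpha^{-1}} = L_\alpha$ by combining the composition law and normalization in (LF2) with $\phi^\alpha_\alpha = L_\alpha$ from (LF3), then instantiate the first crossed condition with $\beta = \alpha^{-1}$ and postcompose with $\nu_x$. The only differences are cosmetic — you reverse the order of the two steps and phrase the goal via the inner product $\eta_\alpha = \nu_x m_{\alpha,\alpha^{-1}}$ — and your instance $m_{\alpha^{-1},\alpha}\sigma = m_{\alpha,\alpha^{-1}}(\phi^\alpha_{\alpha^{-1}}\ot L_{\alpha^{-1}})$ even records the correct tensor factor where the paper's displayed equation has a small typo ($\ot L_\alpha$).
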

\begin{proof}
First of all note that $\phi_\alpha^\alpha=L_\alpha=\phi_{1_x}^\alpha$ and $\phi_{\alpha^{-1}}^\alpha:L_\alpha\to L_\alpha$ satisfies $\phi_{\alpha^{-1}}^\alpha \phi_\alpha^\alpha= \phi_{1_x}^\alpha$, hence it is the identity on $L_\alpha$. Now
$$m_{\alpha^{-1},\alpha}\sigma=m_{\alpha,\alpha^{-1}} (\phi_{\alpha^{-1}}^\alpha \ot L_\alpha)=m_{\alpha,\alpha^{-1}}.$$
Particularly $\nu_x m_{\alpha^{-1},\alpha} \sigma =\nu_x m_{\alpha,\alpha^{-1}}$.
\end{proof}

\section{Classification of $2$-dimensional $(X,Y)$-HQFTs}\label{2D1}
By the same reasoning as for the classification of $1$-dimensional $(X,Y)$-HQFTs assume some connectedness condition on the pair $(X,Y)$. Fix therefore $(X,Y)$ an admissible pair and $\Vv$ a strict symmetric monoidal target category. We classify HQFTs with target space pair $(X,Y)$ by means of crossed loop Frobenius $(\Gg,\Vv)$-categories. This is done in two steps: determining a crossed loop Frobenius $(\Gg,\Vv)$-category for all 2-dimensional $(X,Y)$-HQFTs, called the \textbf{\textit{underlying}} crossed loop Frobenius $(\Gg,\Vv)$-category. Secondly, showing that every crossed loop Frobenius $(\Gg,\Vv)$-category induces a 2-dimensional $(X,Y)$-HQFT.\\
By Proposition \ref{funcateq2} it is enough to consider a skeleton of the category $\Cob_2^{X,Y}$. Since any connected $1$-dimensional $(X,Y)$-manifold is $(X,Y)$-homeomorphic to the 1-sphere $\Ss^1$ together with a homotopy classes of pointed maps to $X$, identify them with elements $\alpha\in \Gg(x,x)$. Hence, a $2$-dimensional $(X,Y)$-HQFT $\Zz$ maps a connected $(X,Y)$-manifold to an object $L_\alpha$ in $\Vv$. By Proposition \ref{rig} and the fact, that the symmetric monoidal functor $\Zz$ preserves duals, identify $L_{\alpha^{-1}}=L_{\alpha}^*$. Further, for $M$ an arbitrary object in $\Cob^{X,Y}_2$ with the components $\{\alpha_p\}$ its image under $\Zz$ is $\Zz(M)=\bigotimes L_{\alpha_p}$.  \\
Morphisms in $\Cob^{X,Y}_2$ are 2-dimensional $(X,Y)$-cobordisms, called \textbf{\textit{$(X,Y)$-surfaces}}. $\Zz$ maps such an $(X,Y)$-surface $(W,M,N,f)$ with $M=\coprod \alpha_p$ and $N=\coprod \beta_q$ to a morphism $\Zz(W): \bigotimes L_{\alpha_p}\to \bigotimes L_{\beta_q}$ in $\Vv$. Given any $(X,Y)$-surface $W$ label their boundary components with orientations $\epsilon_i \in \{ +,-\}$, depending if the orientation on the boundary agrees with $W$ or not. Hence, the negative compontents will be inputs of $\Zz(W)$ and the positive its output. \\
The easiest $(X,Y)$-surface is the closed disc $B_{\epsilon}(\alpha)$, with orientation on the boundary $\epsilon$ and $g|_{\partial B}=\alpha$ with $\alpha\in\Gg(x,x)$. Note, that given a map $B_\epsilon(\alpha)\to X$ there is a contraction of the loop to the base point, hence $\alpha$ is trivial and there are only two types of closed discs: $B_+(1_x)$ and $B_-(1_x)$.
\[
\begin{tikzpicture}
\draw[fill=gray!30,even odd rule]  (0,0) circle (2cm);
\draw[decorate,decoration={markings,mark=at position 6.3 cm with
{\arrowreversed[line width=1mm]{stealth}};}] (0,0) circle (2cm) node at (-2.7,0) {$1_x$};
\draw (0,-2) node{$\bullet$};

\draw[fill=gray!30,even odd rule]  (6,0) circle (2cm) ;
\draw[decorate,decoration={markings,mark=at position 6.5 cm with
{\arrow[line width=1mm]{stealth}};}] (6,0) circle (2cm) node at (3,0) {$1_x$};
\draw (6,-2) node{$\bullet$};
\end{tikzpicture}
\]

Another two elementary $(X,Y)$-surfaces are the annulus and the disc with two holes. Let $C:=\mathbb{S}^1\times [0,1], C^0:=\mathbb{S}^1\times \{0\}$ and $C^1:=\mathbb{S}^1\times \{1\}$ and provide them with base points $(s,0)$ and $(s,1)$ for $s\in \mathbb{S}^1$. Given two signs $\epsilon,\mu=\pm$ we define $C_{\epsilon,\mu}$ the oriented annulus $C$ such that 
$$\partial C_{\epsilon,\mu}=\epsilon C^0_\epsilon\coprod \mu C^1_\mu.$$
The homotopy class of $g: C_{\epsilon,\mu}\to X$ is determined by a loop $\alpha=[g|_{C^0_{\epsilon}}]$ and a path $\beta^{-1}=[g|_{s\times[0,1]}]$. Note that $[g|_{C^1_\mu}]=\beta\alpha^{-\epsilon\mu}\beta^{-1} \in \Gg$. Denote the annulus by $C_{\epsilon,\mu}(\alpha;\beta)$.
\[
\begin{tikzpicture}
\draw[fill=gray!30,even odd rule]  (0,0) circle (2cm) 
                                (0,0) circle (1cm);
\draw[decorate,decoration={markings,mark=at position 6.3 cm with
{\arrowreversed[line width=1mm]{stealth}};}] (0,0) circle (2cm) node at (-2.7,0) {$\beta\alpha\beta^{-1}$};
\draw[decorate,decoration={ markings,mark=at position 3cm with
{\arrowreversed[line width=1mm]{stealth}};}] (0,0) circle (1cm) node at (-1.5,0) {$\alpha$};
\draw (0,-2) node{$\bullet$};
\draw (0,-1) node{$\bullet$};
\draw[decorate,decoration={ markings,mark=at position 0.6cm with
{\arrow[line width=1mm]{stealth}};}] (0,-2) --(0,-1)  ;
\draw (0,-2) --(0,-1) node at (-0.5,-1.5) {$\beta$} ;
\draw (0,-2.5) node {$C_{-+}(\alpha;\beta)$};

\draw[fill=gray!30,even odd rule]  (6,0) circle (2cm) 
                                (6,0) circle (1cm);
\draw[decorate,decoration={markings,mark=at position 6.5 cm with
{\arrow[line width=1mm]{stealth}};}] (6,0) circle (2cm) node at (3,0) {$\beta\alpha^{-1}\beta^{-1}$};
\draw[decorate,decoration={ markings,mark=at position 3cm with
{\arrowreversed[line width=1mm]{stealth}};}] (6,0) circle (1cm) node at (4.5,0) {$\alpha$};
\draw (6,-2) node{$\bullet$};
\draw (6,-1) node{$\bullet$};
\draw[decorate,decoration={ markings,mark=at position 0.6cm with
{\arrow[line width=1mm]{stealth}};}] (6,-2) --(6,-1)  ;
\draw (6,-2) --(6,-1) node at (5.5,-1.5) {$\beta$} ;
\draw (6,-2.5)  node {$C_{--}(\alpha;\beta)$};
\end{tikzpicture}
\]

Let now $D$ be a 2-disc with two holes, with boundary components $S,T,U$ and base points $s,t,u$. Fix two arcs $us, ut$ in $D$.  Given three signs $\epsilon,\mu,\nu$ we define analgously $D_{\epsilon, \mu,\nu}$ such that 
$$\partial D_{\epsilon,\mu,\nu}=\epsilon S_\epsilon\coprod \mu T_\mu \coprod \nu U_\nu.$$
The homotopy class of $g:D_{\epsilon,\mu,\nu}\to X$ is given by loops $\alpha:=[g|_{S_\epsilon}]$ and $\beta:=[g|_{T_\mu}]$ and two paths $\rho:=[g|_{us}]$ and $ \delta:=[g|_{ut}]$. Denote this $(X,Y)$-surface by $D_{\epsilon,\mu,\nu} (\alpha,\beta;\rho,\delta)$. Note that the loop $\gamma:=[g|_{U_\nu}]$ is given by $(\rho\alpha^{-\epsilon}\rho^{-1}\delta\beta^{-\mu}\delta^{-1})^{\nu}$. There are $(X,Y)$-homeomorphisms
$$D_{\epsilon,\mu,\nu} (\alpha,\beta;\rho,\delta)\simeq D_{\mu,\nu,\epsilon} (\beta,\gamma;\rho^{-1}\delta, \rho^{-1})\simeq D_{\nu,\epsilon,\mu,} (\gamma,\alpha; \delta^{-1},\delta^{-1}\rho).$$

\[
\begin{tikzpicture}
\draw[fill=gray!30,even odd rule]  (0,0) circle (3cm) 
                                (-1.5,0) circle (1cm)
 					(1.5,0) circle (1cm);
\draw[decorate,decoration={markings,mark=at position 9.5 cm with
{\arrowreversed[line width=1mm]{stealth}};}] (0,0) circle (3cm) node at (-4.2,0){$\rho\alpha\rho^{-1}\delta\beta\delta^{-1}$};
\draw[decorate,decoration={ markings,mark=at position 3cm with
{\arrowreversed[line width=1mm]{stealth}};}] (-1.5,0) circle (1cm) node at (-2,0) {$\alpha$};
\draw[decorate,decoration={ markings,mark=at position 3cm with
{\arrowreversed[line width=1mm]{stealth}};}] (1.5,0) circle (1cm) node at (1,0) {$\beta$};
\draw (0,-3) node{$\bullet$};
\draw (-1.5,-1) node{$\bullet$};
\draw (1.5,-1) node{$\bullet$};
\draw[decorate,decoration={ markings,mark=at position 1cm with
{\arrowreversed[line width=1mm]{stealth}};}] (-1.5,-1) --(0,-3)  ;
\draw[decorate,decoration={ markings,mark=at position 1cm with
{\arrowreversed[line width=1mm]{stealth}};}] (1.5,-1) --(0,-3)  ;
\draw (0,-3) --(-1.5,-1) node at (-1,-2.5) {$\rho$} ;
\draw (0,-3) --(1.5,-1) node at (1,-2.5) {$\delta$} ;
\draw (0,-3.5) node {$D_{--+}(\alpha,\beta;\rho,\delta)$};
\end{tikzpicture}
\]

Analogously, denote $(X,Y)$-discs with $n$-holes by $D_{\epsilon_1,...,\epsilon_n,\mu}(\alpha_1,....,\alpha_n; \rho_1,...,\rho_n)$ where $\epsilon_i=\pm$ are the orientations on the inner boundary components and $\mu=\pm$ the orientation on the outer circle. The loops determined by the inner boundary are denote by $\alpha_i$ and the paths connecting the basepoint of the outer circle to the base point of the inner circles are denote by $\rho_i$. \\
The objects $(L_{\alpha})_{\alpha \in \Gg}$ can be endowed with a crossed loop Frobenius $(\Gg,\Vv)$-category structure, using these elementary $(X,Y)$-surfaces. The two closed discs are considered cobordisms from or to the empty manifold, hence the $(X,Y)$-HQFT $\Zz$ maps them to
$$\Zz(B_+(1_x))=:j_x: I\to L_{1_x}, \qquad \Zz(B_-(1_x))=:\nu_x: L_{1_x}\to I$$
Let $\alpha,\beta:x\to x$ be two loops, then $D_{--+} (\alpha,\beta;1_x,1_x)$ is mapped to the morphism 
$$\Zz(D_{--+} (\alpha,\beta;1_x,1_x))=:m_{\alpha,\beta}: L_\alpha\ot L_\beta\to L_{\alpha\beta}, $$
and similarly 
$$\Zz(D_{++-} (\beta,\alpha;1_x,1_x))=:\Delta_{\alpha,\beta}:  L_{\alpha\beta}\to L_{\alpha} \ot L_\beta.$$  
For $\alpha\in \Gg(x,x)$ and $\beta:y\to x$, we can consider the $(X,Y)$-annulus $C_{-+}(\alpha;\beta)$ as a cobordism between $(C^0_{-},\alpha)$ and $(C_+^1,\beta\alpha\beta^{-1})$. Therefore
$$\Zz(C_{-+}(\alpha;\beta))=:\phi^\alpha_\beta: L_\alpha\to L_{\beta\alpha\beta^{-1}}.$$
Particularly, $C_{-+}(\alpha;1_x)$ is $(X,Y)$-homeomorphic to the identity cylinder,  hence
$$\Zz(C_{-+}(\alpha;1_x))=L_\alpha.$$
Furthermore, we define
\begin{eqnarray*}
\Zz(C_{--}(\alpha;1_x))&=:&\eta_\alpha: L_\alpha\ot L_{\alpha^{-1}}\to I.\\
\Zz(C_{++}(\alpha;1_x))&=:&\coev_\alpha: I\to L_{\alpha^{-1} }\ot L_\alpha\to I
\end{eqnarray*}
and we will see in the proof of Lemma \ref{CLF}, that this is consitent with both definitions of Frobenius $(\Gg,\Vv)$-categories. These operations form a crossed loop Frobenius $(\Gg,\Vv)$-category and determine the functor $\Zz$ uniquely.

\begin{theorem}
Let $(\Vv,\ot,I,\sigma)$ be a (strict) symmetric monoidal category. Let $(X,Y)$ be an admissble pair of spaces and denote the relative fundamental groupoid of the pair $(X,Y)$ by $\Gg=\Pi_1(X,Y)$. The category $\Qq_2(X,Y,\Vv)$ of 2-dimensional $(X,Y)$-HQFTs 
 is equivalent to the category $\Frob_{\circ}^{\Gg}(\Vv)$ of crossed loop Frobenius $(\Gg,\Vv)$-categories.
\end{theorem}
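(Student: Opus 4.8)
The plan is to exhibit an equivalence by constructing functors in both directions and showing they are mutually quasi-inverse. By Proposition~\ref{funcateq2} and MacLane coherence it suffices to replace $\Cob^{X,Y}_2$ by a skeleton in which every object is a finite disjoint union of labelled circles $(\mathbb{S}^1,\alpha)$, $\alpha\in\Gg(x,x)$, and every connected surface is presented through a handle (pair-of-pants) decomposition into the elementary $(X,Y)$-surfaces introduced above. I would then set up a functor $\Qq_2(X,Y,\Vv)\to\Frob_{\circ}^{\Gg}(\Vv)$ extracting the \emph{underlying} structure of an HQFT, and a reconstruction functor $\Frob_{\circ}^{\Gg}(\Vv)\to\Qq_2(X,Y,\Vv)$, the second being by far the more delicate.

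For the forward direction, given $\Zz$ I would put $L_\alpha:=\Zz(\mathbb{S}^1,\alpha)$; since $\Zz$ is symmetric monoidal and $\Cob^{X,Y}_2$ is rigid (Proposition~\ref{rig}) with dual obtained by reversing orientation, this forces $L_{\alpha^{-1}}=L_\alpha^*$, and the whole of $\Zz$ is determined on objects by $\Zz(\coprod_p(\mathbb{S}^1,\alpha_p))=\bigotimes_p L_{\alpha_p}$. The structure maps $m,j,\Delta,\nu,\eta,\coev,\phi$ are the images under $\Zz$ of $D_{--+},\,B_+,\,D_{++-},\,B_-,\,C_{--},\,C_{++},\,C_{-+}$. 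To verify axioms (FC1)--(FC3) and (LF1)--(LF3) I would, for each relation, produce an $(X,Y)$-homeomorphism between the two glued surfaces representing its two sides; functoriality of $\Zz$ together with invariance under homeomorphism then yields the corresponding equality in $\Vv$. Concretely, associativity and the Frobenius conditions arise from the two decompositions of a four-holed sphere, the compatibilities (LF1)--(LF2) from sliding an annulus along its connecting path (which composes and respects $\beta$), and the trace identity in (LF3) from the two handle decompositions of the one-holed torus. This packages precisely the content announced as Lemma~\ref{CLF}.

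The reconstruction functor is the crux. Given a crossed loop Frobenius $(\Gg,\Vv)$-category $L$, I would define $\Zz$ on a connected $(X,Y)$-surface by fixing a decomposition into elementary pieces, recording on each piece the loops labelling its boundary circles and the paths joining their basepoints, and composing the associated morphisms $m,j,\Delta,\nu,\eta,\coev,\phi,\sigma$ accordingly. The essential task is \textbf{well-definedness}: any two decompositions of the same $(X,Y)$-surface must give the same morphism in $\Vv$. This demands a generators-and-relations presentation of $\Cob^{X,Y}_2$ refining the classical presentation of the $2$-dimensional cobordism category, in which the homotopical decoration is carried by the boundary loops and connecting paths, and the mapping-class-group moves (interchange of boundary components, Dehn twists, sliding of handles) are absorbed by the braiding $\sigma$ and the crossings $\phi^\alpha_\beta$. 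One then checks that every generating move corresponds to exactly one of the Frobenius axioms or (LF1)--(LF3).

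Finally I would verify that the two functors are mutually inverse up to isomorphism and extend to morphisms: a monoidal natural transformation of HQFTs restricts componentwise to a crossed loop Frobenius functor (automatically invertible by Proposition~\ref{natrig}), and a crossed loop Frobenius functor reassembles into a monoidal natural transformation through the same decompositions. The hard part will be the combinatorial heart of the reverse direction, namely proving \emph{completeness} of the relations --- that (LF1)--(LF3) together with the Frobenius axioms suffice, and in particular that the single partial-trace relation in (LF3) handles all higher-genus identifications. This is the groupoid analogue of Turaev's genus-one normalisation condition, and establishing that these relations exhaust the moves on $(X,Y)$-decorated surfaces is where the real work lies.
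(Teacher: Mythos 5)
Your plan coincides with the paper's proof in both structure and substance: the same forward functor extracting $L_\alpha=\Zz(\mathbb{S}^1,\alpha)$ with structure maps given by the images of $B_\pm$, $D_{--+}$, $D_{++-}$, $C_{--}$, $C_{++}$, $C_{-+}$, axioms verified by exhibiting $(X,Y)$-homeomorphisms between glued elementary surfaces (with the punctured torus yielding the trace identity in (LF3)), and the same reconstruction functor whose well-definedness is the crux. The ``completeness of relations'' you flag as the remaining real work is exactly what the paper settles by explicit invariance computations under Dehn twists, the reflection, and all gluings of cylinders to discs with $\leq 2$ holes, concluding with an appeal to Turaev's independence-of-splitting-systems argument, so your outline is correct and essentially identical in approach.
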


The proof is done in multiple lemmas by adapting the proof of Turaev in \cite{Tu1}. 

\begin{lemma}
The objects $(L_\alpha)_{\alpha \in \Gg_0}$ form a $(\Gg_0,\Vv)$-category with multiplication $m_{\alpha,\beta}$ and unit $j_x$.
\end{lemma}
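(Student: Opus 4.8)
The plan is to exploit the defining property of $\Zz$ as a symmetric monoidal functor: it sends gluing of $(X,Y)$-cobordisms to composition in $\Vv$ and disjoint unions of $(X,Y)$-surfaces to tensor products. Since morphisms in $\Cob^{X,Y}_2$ are $(X,Y)$-homeomorphism classes, two surfaces obtained by gluing the elementary pieces are sent to the same morphism in $\Vv$ whenever they are $(X,Y)$-homeomorphic. Thus both the associativity and the unitality of $(m,j)$ will be read off from elementary homeomorphisms between glued pairs of pants and caps, rather than from any direct computation in $\Vv$.

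First, for associativity I would realize both composites $m_{\alpha\beta,\gamma}(m_{\alpha,\beta}\ot L_\gamma)$ and $m_{\alpha,\beta\gamma}(L_\alpha\ot m_{\beta,\gamma})$ as $\Zz$ applied to a single surface, the three-holed disc $D_{---+}(\alpha,\beta,\gamma;1_x,1_x,1_x)\colon L_\alpha\ot L_\beta\ot L_\gamma\to L_{\alpha\beta\gamma}$. Concretely, gluing the output leg of $D_{--+}(\alpha,\beta;1_x,1_x)$ to the first input leg of $D_{--+}(\alpha\beta,\gamma;1_x,1_x)$ produces (by functoriality of $\Zz$ under gluing) the left composite, while gluing the output of $D_{--+}(\beta,\gamma;1_x,1_x)$ to the second input of $D_{--+}(\alpha,\beta\gamma;1_x,1_x)$ produces the right one. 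Both glued surfaces are $(X,Y)$-homeomorphic to $D_{---+}(\alpha,\beta,\gamma;1_x,1_x,1_x)$, so the two composites coincide. The delicate point is that the characteristic maps must agree: I would check that under each gluing the connecting paths remain trivial and the inner loops multiply to give $\alpha\beta\gamma$ on the outer circle. Because $X$ is a homotopy $1$-type, a map out of the surface is determined up to homotopy relative to the boundary by precisely this $\Gg$-data, so the two labellings describe the same homotopy class, and the identification is governed by the associativity of composition in $\Gg$.

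Next, for unitality I would cap a leg of a pair of pants with a closed disc. Gluing $B_+(1_x)$, whose image is $j_x\colon I\to L_{1_x}$, onto the first input of $D_{--+}(1_x,\alpha;1_x,1_x)$ yields a surface $(X,Y)$-homeomorphic to the identity cylinder $C_{-+}(\alpha;1_x)$ over $(\mathbb{S}^1,\alpha)$, since the connecting paths are trivial. As $\Zz$ sends this cylinder to $L_\alpha=\mathrm{id}_{L_\alpha}$ and sends the gluing to $m_{1_x,\alpha}(j_x\ot L_\alpha)$, strictness of $\Vv$ (so $I\ot L_\alpha=L_\alpha$) gives $m_{1_x,\alpha}(j_x\ot L_\alpha)=L_\alpha$. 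The right unit law $m_{\alpha,1_x}(L_\alpha\ot j_x)=L_\alpha$ follows symmetrically by capping the second leg of $D_{--+}(\alpha,1_x;1_x,1_x)$ with $B_+(1_x)$, where I use that a loop $\alpha$ lies in $\Gg_0(x,x)$, so both endpoints equal $x$ and the relevant unit is $j_x$.

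The main obstacle is the bookkeeping of homotopy classes under gluing: one must verify that, after each gluing, the induced characteristic map has exactly the claimed boundary loops and connecting paths, so that the glued surface is correctly identified with the elementary surface in the chosen skeleton (Proposition \ref{funcateq2}). Since $X$ is a $1$-type, this reduces to a finite verification inside the groupoid $\Gg$ rather than a genuinely homotopical argument, and the associativity and unit laws of composition in $\Gg$ supply precisely the identities required.
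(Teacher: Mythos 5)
Your proposal is correct and follows essentially the same route as the paper: associativity is obtained by presenting the three-holed disc $D_{---+}(\alpha,\beta,\gamma;1_x,1_x,1_x)$ as two different gluings of pairs of pants (with identity cylinders carrying the spectator leg), and unitality by capping a leg of $D_{--+}$ with $B_+(1_x)$ to obtain a surface $(X,Y)$-homeomorphic to the identity cylinder $C_{-+}(\alpha;1_x)$, all read through functoriality of $\Zz$ and invariance under $(X,Y)$-homeomorphism. Your added bookkeeping of characteristic maps via the homotopy $1$-type hypothesis is a point the paper leaves implicit, but it is the same argument.
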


\begin{proof}
We prove associativity and unitality. Let $\alpha,\beta,\gamma \in \Gg(x,x)$ and $$W=D_{---+}(\alpha,\beta,\gamma;1_x,1_x,1_x)$$ be a disc with 3 holes, labeled by $\alpha, \beta, \gamma$ and the three paths connecting the basepoints labeled  by identities.

\[
\begin{tikzpicture}
\draw[fill=gray!30,even odd rule]  (0,0) circle (3cm) 
                                (-2,0) circle (0.5cm)
 					(2,0) circle (0.5cm)
					(0,0) circle (0.5cm);
\draw[decorate,decoration={markings,mark=at position 9.5 cm with
{\arrowreversed[line width=1mm]{stealth}};}] (0,0) circle (3cm) node at (-3.7,0){$\alpha\beta\gamma$};
\draw[decorate,decoration={ markings,mark=at position 1cm with
{\arrowreversed[line width=1mm]{stealth}};}] (-2,0) circle (0.5cm) node at (-2.7,0) {$\alpha$};
\draw[decorate,decoration={ markings,mark=at position 1cm with
{\arrowreversed[line width=1mm]{stealth}};}] (0,0) circle (0.5cm) node at (-0.7,0) {$\beta$};
\draw[decorate,decoration={ markings,mark=at position 1cm with
{\arrowreversed[line width=1mm]{stealth}};}] (2,0) circle (0.5cm) node at (1.3,0) {$\gamma$};
\draw (0,-3) node{$\bullet$};
\draw (-2,-0.5) node{$\bullet$};
\draw (0,-0.5) node{$\bullet$};
\draw (2,-0.5) node{$\bullet$};
\draw[decorate,decoration={ markings,mark=at position 1cm with
{\arrowreversed[line width=1mm]{stealth}};}] (-2,-0.5) --(0,-3)  ;
\draw[decorate,decoration={ markings,mark=at position 1cm with
{\arrowreversed[line width=1mm]{stealth}};}] (0,-0.5) --(0,-3)  ;
\draw[decorate,decoration={ markings,mark=at position 1cm with
{\arrowreversed[line width=1mm]{stealth}};}] (2,-0.5) --(0,-3)  ;
\draw (0,-3) --(-2,-0.5) node at (-1.7,-1.5) {$1_x$} ;
\draw (0,-3) --(2,-0.5) node at (1.7,-1.5) {$1_x$} ;
\draw (0,-3) --(0,-0.5) node at (-0.4,-1.5) {$1_x$} ;
\end{tikzpicture}
\]

$W$ can be disected into elementary $(X,Y)$-surfaces in two different ways. Let $$W_0:=D_{--+}(\alpha,\beta;1_x,1_x) \coprod C_{-+}(\gamma;1_x)$$ and $W_1:=D_{--+}(\alpha\beta,\gamma;1_x,1_x)$ and further $$W_2:=C_{-+}(\alpha;1_x)\coprod D_{--+}(\beta,\gamma;1_x,1_x) $$ and $W_3:=D_{--+}(\alpha,\beta\gamma;1_x,1_x)$. Gluing $W_0$ to $W_1$ and $W_2$ to $W_3$ yields discs with 3 holes $(X,Y)$-homeomorphic to $W$. Since $\Zz$ is functorial:
\begin{eqnarray*}
m_{\alpha\beta,\gamma}(m_{\alpha,\beta}\ot L_\gamma)&=&\Zz(D_{--+}(\alpha\beta,\gamma;1_x,1_x))(\Zz(D_{--+}(\alpha,\beta;1_x,1_x)\ot \Zz(C_{-+}(\gamma;1_x)))\\
&=&\Zz(W_1)\Zz(W_0)=\Zz(W)=\Zz(W_3)\Zz(W_2)\\
&=&\Zz(D_{--+}(\alpha,\beta\gamma;1_x,1_x))(\Zz(C_{-+}(\alpha;1_x))\ot \Zz(D_{--+}(\beta,\gamma;1_x,1_x)))\\
&=&m_{\alpha,\beta\gamma}(L_\alpha \ot m_{\beta,\gamma}).
\end{eqnarray*}

For unitality, look at $C_{-+}(\alpha;1_x)$ and the gluing of $D_{--+}(\alpha,1_x;1_x,1_x)$ with $C_{-+}(\alpha;1_x)$ and $B_{+}(1_x)$ glued to the first and second inner boundary respectively. These two $(X,Y)$-surfaces are $(X,Y)$-homeomorphic; hence
$$m_{\alpha,1_x}(L_\alpha \ot j_x)=\Zz(D_{--+}(\alpha,1_x;1_x,1_x))(\Zz(C_{-+}(\alpha;1_x))\ot\Zz(B_{+}(1_x)))=\Zz(C_{-+}(\alpha;1_x))=L_\alpha$$
and analogously for the left unitality. Hence $(L_\alpha)_{\alpha \in \Gg_0}$ form a $(\Gg_0,\Vv)-category$.
\end{proof}

\begin{lemma}
The objects $(L_\alpha)_{\alpha \in \Gg_0}$ form a $(\Gg_0,\Vv)-opcategory$ with comultiplication $\Delta_{\alpha,\beta}$ and counit $\nu_x$.
\end{lemma}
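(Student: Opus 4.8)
The plan is to run the argument of the preceding lemma in reverse, replacing each elementary surface by its orientation-reversal. Thus the disc-with-two-holes now supplies the comultiplication $\Delta_{\alpha,\beta}=\Zz(D_{++-}(\beta,\alpha;1_x,1_x))$ and the closed disc supplies the counit $\nu_x=\Zz(B_-(1_x))$, while the identity cylinder $C_{-+}(\alpha;1_x)$ still represents $L_\alpha$. Coassociativity and counitality then become two functoriality statements about $\Zz$ applied to a single three-holed disc read in the direction opposite to the one used for the multiplication.

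For coassociativity I would fix $\alpha,\beta,\gamma\in\Gg(x,x)$ and take the three-holed disc $W=D_{+++-}(\alpha,\beta,\gamma;1_x,1_x,1_x)$, now viewed as a cobordism $L_{\alpha\beta\gamma}\to L_\alpha\ot L_\beta\ot L_\gamma$ (all inner boundaries positive, i.e.\ outputs, outer boundary negative, i.e.\ input). Exactly as before, $W$ can be cut into elementary pieces in two $(X,Y)$-homeomorphic ways: once using $D_{++-}(\alpha\beta,\gamma;1_x,1_x)$ together with $D_{++-}(\alpha,\beta;1_x,1_x)\coprod C_{-+}(\gamma;1_x)$, and once using $D_{++-}(\alpha,\beta\gamma;1_x,1_x)$ together with $C_{-+}(\alpha;1_x)\coprod D_{++-}(\beta,\gamma;1_x,1_x)$. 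Functoriality of $\Zz$ then gives
\[
(\Delta_{\alpha,\beta}\ot L_\gamma)\,\Delta_{\alpha\beta,\gamma}=\Zz(W)=(L_\alpha\ot\Delta_{\beta,\gamma})\,\Delta_{\alpha,\beta\gamma},
\]
which is precisely the coassociativity identity. For counitality I would glue the counit cap $B_-(1_x)$ onto one inner boundary of $D_{++-}(\alpha,1_x;1_x,1_x)$ and observe that the result is $(X,Y)$-homeomorphic to the identity cylinder $C_{-+}(\alpha;1_x)$; functoriality then yields $(L_\alpha\ot\nu_x)\,\Delta_{\alpha,1_x}=L_\alpha$, and symmetrically $(\nu_x\ot L_\alpha)\,\Delta_{1_x,\alpha}=L_\alpha$ for the left counit. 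This establishes that $(L,\Delta,\nu)$ is a $(\Gg_0,\Vv)$-opcategory.

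The main obstacle is bookkeeping rather than conceptual: with all orientation signs flipped one must verify that the three surfaces glued in each dissection are genuinely $(X,Y)$-homeomorphic to the stated three-holed disc and cylinder, i.e.\ that the boundary orientations $\epsilon,\mu,\nu$ and the connecting paths $\rho,\delta$ match after gluing, and that the labelling order on the inner boundaries agrees with the order of tensor factors in $\Delta$. A clean way to bypass most of this is to invoke the orientation-reversal $W\mapsto -W$ from \prref{rig}: it carries the surfaces defining $(m,j)$ to those defining $(\Delta,\nu)$, so the opcategory axioms are literally the category axioms of the preceding lemma transported along this duality. I would give the explicit dissection argument for concreteness and note this shortcut as a verification.
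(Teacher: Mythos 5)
Your proposal is correct and matches the paper's approach: the paper disposes of this lemma in one line, stating that the proof is dual to the preceding one by reversing orientations on $D_{--+}$ and $B_+$, which is precisely the orientation-reversal duality you invoke (and your explicit dissection of $D_{+++-}$ is just that dual argument written out). The bookkeeping caveats you flag (matching the inner-boundary labelling order of $D_{++-}(\beta,\alpha;1_x,1_x)$ with the tensor factors of $\Delta_{\alpha,\beta}$) are exactly the routine checks the paper leaves implicit.
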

This proof is dual to the previous, by reversing orientations on $D_{--+}$ and $B_+$.

\begin{lemma}
The objects $(L_\alpha)_{\alpha \in \Gg_0}$ form a loop Frobenius $(\Gg,\Vv)$-category.
\end{lemma}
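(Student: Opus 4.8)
The plan is to verify the Frobenius compatibility conditions (FC3) of Definition \ref{Frob1}; together with the two preceding lemmas, which supply (FC1) and (FC2) for the totally disconnected groupoid $\Gg_0$, this exhibits $(L_\alpha)_{\alpha\in\Gg_0}$ as a Frobenius $(\Gg_0,\Vv)$-category, i.e. a loop Frobenius $(\Gg,\Vv)$-category. As in the two preceding lemmas the argument is geometric: each side of a Frobenius identity is realized as $\Zz$ applied to a decomposition of one and the same $(X,Y)$-surface into elementary pieces, and functoriality of $\Zz$ forces the two decompositions to agree.

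Concretely, to prove $\Delta_{\alpha,\beta\gamma}\, m_{\alpha\beta,\gamma} = (L_\alpha \ot m_{\beta,\gamma})(\Delta_{\alpha,\beta}\ot L_\gamma)$ for loops $\alpha,\beta,\gamma:x\to x$, I would fix the planar genus-zero $(X,Y)$-surface $W$ (a four-holed sphere, presentable as a disc with two holes in the notation $D_{\epsilon,\mu,\nu}$) carrying two incoming boundary circles labelled $\alpha\beta$ and $\gamma$ and two outgoing circles labelled $\alpha$ and $\beta\gamma$, all connecting arcs bearing the trivial path $1_x$. Cutting $W$ along a circle that separates the two incoming circles from the two outgoing ones expresses $W$ as the gluing of a multiplication pair of pants $D_{--+}(\alpha\beta,\gamma;1_x,1_x)$ onto a comultiplication pair of pants of type $D_{++-}$, whence $\Zz(W)=\Delta_{\alpha,\beta\gamma}\, m_{\alpha\beta,\gamma}$. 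Cutting $W$ along a different circle, one separating the output $\alpha$ together with part of the incoming $\alpha\beta$ from the remainder, presents the same $W$ as a comultiplication pants on $\alpha\beta$ (splitting off $\alpha$ and $\beta$) glued alongside a multiplication pants on $\beta,\gamma$, yielding $(L_\alpha \ot m_{\beta,\gamma})(\Delta_{\alpha,\beta}\ot L_\gamma)$. The second Frobenius identity follows by the same argument with the roles of incoming and outgoing circles exchanged (equivalently, by reversing all boundary orientations), invoking the $(X,Y)$-homeomorphisms among the $D_{\epsilon,\mu,\nu}$ recorded above.

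Finally I would check that the geometrically defined pairing and copairing match the algebraic ones coming from the equivalence of Definitions \ref{Frob1} and \ref{Frob2}, namely $\eta_\alpha=\Zz(C_{--}(\alpha;1_x))=\nu_x\, m_{\alpha,\alpha^{-1}}$ and $\coev_\alpha=\Zz(C_{++}(\alpha;1_x))=\Delta_{\alpha^{-1},\alpha}\, j_x$. This is again a decomposition statement: the annulus $C_{--}(\alpha;1_x)$, read as a cobordism $L_\alpha\ot L_{\alpha^{-1}}\to\emptyset$, is $(X,Y)$-homeomorphic to the pants $D_{--+}(\alpha,\alpha^{-1};1_x,1_x)$ with its outgoing boundary capped by the disc $B_-(1_x)$, and dually for $C_{++}$. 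Since $\Zz$ preserves duals, $(L_{\alpha^{-1}},\eta_\alpha,\coev_\alpha)$ is then a dual of $L_\alpha$, so the zig-zag equations (IP1) and the multiplicativity (IP2) of Definition \ref{Frob2} hold automatically, confirming the consistency promised earlier.

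The main obstacle I anticipate is not the topology of the cut-and-paste moves, which are the familiar $2$D TQFT manipulations, but the bookkeeping of the homotopy data. One must confirm that under each $(X,Y)$-homeomorphism the induced labels on the boundary circles and connecting arcs are exactly the $\alpha,\beta,\gamma$ and $1_x$ claimed, i.e. that the characteristic maps glue to the same relative homotopy class on $W$. Because $X$ is a homotopy $1$-type this reduces to verifying equalities in the groupoid $\Gg$, using the conjugation rule $[g|_{C^1_\mu}]=\beta\alpha^{-\epsilon\mu}\beta^{-1}$ for annuli and its pair-of-pants analogue $\gamma=(\rho\alpha^{-\epsilon}\rho^{-1}\delta\beta^{-\mu}\delta^{-1})^\nu$; keeping the orientations $\epsilon,\mu,\nu$ and the induced basepoint arcs coherent across every gluing is the delicate point, and is exactly where care is needed relative to the classical group-graded case.
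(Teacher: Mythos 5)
Your proposal is correct and takes essentially the same route as the paper: there, both Frobenius identities (FC3) are obtained by decomposing a genus-zero $(X,Y)$-surface with four boundary circles — the disc with \emph{three} holes $D_{--++}(\alpha,\beta\gamma,\gamma;1_x,1_x,1_x)$, not a disc with two holes as your parenthetical misnames it — in two ways (multiplication pants glued to comultiplication pants, versus pants-plus-cylinder layers) and invoking functoriality of $\Zz$, exactly as you describe. The only other difference is that your consistency check $\eta_\alpha=\Zz(C_{--}(\alpha;1_x))=\nu_x m_{\alpha,\alpha^{-1}}$ and $\coev_\alpha=\Zz(C_{++}(\alpha;1_x))=\Delta_{\alpha^{-1},\alpha}j_x$ is not needed for this lemma's statement; the paper defers that verification to the proof of Lemma~\ref{CLF}, so your extra paragraph is harmless but redundant here.
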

\begin{proof}
We need to prove the Frobenius conditions:
\begin{eqnarray*}
\Delta_{\alpha,\beta\gamma}m_{\alpha\beta,\gamma}&=&(L_\alpha\ot m_{\beta,\gamma})(\Delta_{\alpha,\beta}\ot L_\gamma),\\
\Delta_{\alpha\beta,\gamma}m_{\alpha,\beta\gamma}&=&(m_{\alpha,\beta}\ot L_\gamma)(L_\alpha\ot \Delta_{\beta,\gamma})
\end{eqnarray*}
 for $\alpha,\beta,\gamma \in \Gg(x,x).$
To prove the first condition we look at the $(X,Y)$-disc with three holes $W=D_{--++}(\alpha,\beta\gamma,\gamma;1_x,1_x,1_x)$, which can be represented as gluing $W_1=D_{-++}(\alpha\beta\gamma,\gamma;1_x,1_x)$ $\simeq D_{++-}(\gamma,\alpha\beta;1_x,1_x)$ to $W_2=D_{--+}(\alpha, \beta\gamma;1_x,1_x)$. Similarly, we can represent $W$ as gluing $W_3=D_{--+}(\alpha,\beta;1_x,1_x)\coprod C_{-+}(\gamma,1_x)$ to $W_4=C_{-+}(\alpha;1_x) \coprod D_{-++}(\beta\gamma,\gamma;1_x,1_x)$, where the last disc with two holes is $(X,Y)$-homeomorphic to $D_{++-}(\gamma,\beta;1_x,1_x)$.
\[
\begin{tikzpicture}
\draw[fill=gray!30,even odd rule]  (0,0) circle (3cm) 
                                (-2,0) circle (0.5cm)
 					(2,0) circle (0.5cm)
					(0,0) circle (0.5cm);
\draw (-1,0) circle (1.75cm);
\draw[decorate,decoration={markings,mark=at position 9.5 cm with
{\arrowreversed[line width=1mm]{stealth}};}] (0,0) circle (3cm) node at (-3.8,0){$\alpha\beta$};
\draw[decorate,decoration={ markings,mark=at position 1cm with
{\arrowreversed[line width=1mm]{stealth}};}] (-2,0) circle (0.5cm) node at (-2,0) {$\alpha$};
\draw[decorate,decoration={ markings,mark=at position 1cm with
{\arrowreversed[line width=1mm]{stealth}};}] (0,0) circle (0.5cm) node at (0,0) {$\beta\gamma$};
\draw[decorate,decoration={ markings,mark=at position 1cm with
{\arrow[line width=1mm]{stealth}};}] (2,0) circle (0.5cm) node at (2,0) {$\gamma$};
\draw[decorate,decoration={ markings,mark=at position 1.5cm with
{\arrowreversed[line width=1mm]{stealth}};}] (-1,0) circle (1.75cm) node at (0,2) {$\alpha\beta\gamma$};
\draw (0,-3) node{$\bullet$};
\draw (-2,-0.5) node{$\bullet$};
\draw (0,-0.5) node{$\bullet$};
\draw (2,-0.5) node{$\bullet$};
\draw (-1,-1.75) node {$\bullet$};
\draw (0,-3) --(-1,-1.75);
\draw (0,-3) --(2,-0.5);
\draw (-1,-1.75) --(0,-0.5) ;
\draw (-1,-1.75) --(-2,-0.5) ;

\draw[fill=gray!30,even odd rule]  (8,0) circle (3cm) 
                                (6,0) circle (0.5cm)
 					(10,0) circle (0.5cm)
					(8,0) circle (0.5cm);
\draw (9,0) circle (1.75cm)
	(6,0) circle (0.75cm);
\draw[decorate,decoration={markings,mark=at position 9.5 cm with
{\arrowreversed[line width=1mm]{stealth}};}] (8,0) circle (3cm) node at (4.2,0){$\alpha\beta$};
\draw[decorate,decoration={ markings,mark=at position 1cm with
{\arrowreversed[line width=1mm]{stealth}};}] (6,0) circle (0.5cm) node at (6,0) {$\alpha$};
\draw[decorate,decoration={ markings,mark=at position 1cm with
{\arrowreversed[line width=1mm]{stealth}};}] (8,0) circle (0.5cm) node at (8,0) {$\beta\gamma$};
\draw[decorate,decoration={ markings,mark=at position 1cm with
{\arrow[line width=1mm]{stealth}};}] (10,0) circle (0.5cm) node at (10,0) {$\gamma$};
\draw[decorate,decoration={ markings,mark=at position 1.5cm with
{\arrowreversed[line width=1mm]{stealth}};}] (9,0) circle (1.75cm) node at (9,2.2) {$\beta$};
\draw (8,-3) node{$\bullet$};
\draw (6,-0.75) node{$\bullet$};
\draw (8,-0.5) node{$\bullet$};
\draw (10,-0.5) node{$\bullet$};
\draw (9,-1.75) node {$\bullet$};
\draw (8,-3) --(9,-1.75);
\draw (8,-3) --(6,-0.75);
\draw (9,-1.75) --(8,-0.5) ;
\draw (9,-1.75) --(10,-0.5) ;
\draw (6,-0.75)--(6,-0.5);
\end{tikzpicture}
\]

Hence
\begin{eqnarray*}
\Delta_{\alpha\beta,\gamma}m_{\alpha,\beta\gamma}&=&\Zz(D_{+-+}(\alpha\beta\gamma,\gamma;1_x,1_x))\Zz(D_{--+}(\alpha, \beta\gamma;1_x,1_x))=\Zz(W)\\
&=&\Zz\left(D_{--+}(\alpha,\beta;1_x,1_x)  \coprod C_{-+}(\gamma,1_x)\right)\Zz\left(C_{-+}(\alpha;1_x) \coprod D_{-++}(\beta\gamma,\gamma;1_x,1_x)\right)\\
&=&(m_{\alpha,\beta}\ot L_\gamma)(L_\alpha\ot \Delta_{\beta,\gamma}).
\end{eqnarray*}
Similar argumentation on $D_{+--+}(\alpha,\alpha\beta,\gamma;1_x,1_x,1_x)$ shows that $$\Delta_{\alpha,\beta\gamma}m_{\alpha\beta,\gamma}=(L_\alpha\ot m_{\beta,\gamma})(\Delta_{\alpha,\beta}\ot L_\gamma).$$
\end{proof}
\begin{lemma}
Given some 2-punctured $(X,Y)$-disc $D_{--+}(\alpha,\beta;\rho,\delta)$, such that the output label is $\gamma=\rho\alpha\rho^{-1}\delta\beta\rho^{-1}=\beta$, let $P$ be the $(X,Y)$-surface obtained from gluing together the two boundary components labeled $\beta$. Then
$$\Zz(P)=\Trace_{\beta}(\Zz(D_{--+}(\alpha,\beta;\rho,\delta))).$$
\end{lemma}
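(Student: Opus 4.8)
The plan is to realize the categorical partial trace of Definition~\ref{trace} by cutting the self-glued surface $P$ into elementary $(X,Y)$-surfaces and then applying the functoriality of $\Zz$. Write $f:=\Zz(D_{--+}(\alpha,\beta;\rho,\delta))\colon L_\alpha\ot L_\beta\to L_\gamma=L_\beta$, so that by definition
$$\Trace_\beta(f)=\eta_\beta(f\ot L_{\beta^{-1}})(L_\alpha\ot\coev_{\beta^{-1}})\colon L_\alpha\to I.$$
Recall from the discussion preceding the theorem that $\coev_{\beta^{-1}}=\Zz(C_{++}(\beta^{-1};1_x))\colon I\to L_\beta\ot L_{\beta^{-1}}$ and $\eta_\beta=\Zz(C_{--}(\beta;1_x))\colon L_\beta\ot L_{\beta^{-1}}\to I$. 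Hence the right-hand side is already a composite of values of $\Zz$ on elementary annuli and the disc $D_{--+}$, and it suffices to identify the surface glued from these pieces with $P$.

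First I would assemble the composite cobordism computing $\Trace_\beta(f)$: the coevaluation annulus $C_{++}(\beta^{-1};1_x)$ in disjoint union with the identity cylinder on the $\alpha$-circle, followed by $D_{--+}(\alpha,\beta;\rho,\delta)$ in disjoint union with the identity cylinder on the $\beta^{-1}$-circle, and finally the evaluation annulus $C_{--}(\beta;1_x)$. In this composite the $\beta$-circle produced by the coevaluation is glued to the input circle $T$ of $D$, while the output circle $U$ (labelled $\gamma=\beta$) and the $\beta^{-1}$-circle produced by the coevaluation are capped off together by the evaluation. Thus the cup--cylinder--cap portion forms a tube joining $T$ to $U$; since the annuli $C_{--}(\beta;1_x)$ and $C_{++}(\beta^{-1};1_x)$ serve as evaluation and coevaluation for the dual pair $L_\beta, L_{\beta^{-1}}$ (as in the rigidity of $\Cob^{X,Y}_2$, Proposition~\ref{rig}), the zig-zag identities straighten this tube to a single cylinder. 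Consequently the composite surface is $(X,Y)$-homeomorphic to the surface obtained from $D_{--+}(\alpha,\beta;\rho,\delta)$ by gluing $T$ directly to $U$, which is exactly $P$.

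I expect the main obstacle to be the bookkeeping of the characteristic maps rather than the topology, since the topological move is the familiar snake. Concretely, I would verify that the hypothesis $\gamma=\rho\alpha\rho^{-1}\delta\beta\delta^{-1}=\beta$ is precisely what makes the two boundary circles labelled $\beta$ identifiable as $(X,Y)$-manifolds, so that $P$ is a well-defined $(X,Y)$-surface, and that under the $(X,Y)$-homeomorphism above the homotopy class of the map to $X$ on the composite (carrying the trivial connecting paths of the cup and cap) agrees, relative to the base points, with the one on $P$ coming from $\rho$ and $\delta$. This uses that the gluing is performed along entire circles and that $X$ is a homotopy $1$-type, so the characteristic data is pinned down by the boundary loops together with the connecting paths. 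Once this $(X,Y)$-homeomorphism is established, functoriality and monoidality of $\Zz$ turn the geometric composite into $\eta_\beta(f\ot L_{\beta^{-1}})(L_\alpha\ot\coev_{\beta^{-1}})=\Trace_\beta(f)$, yielding $\Zz(P)=\Trace_\beta(\Zz(D_{--+}(\alpha,\beta;\rho,\delta)))$.
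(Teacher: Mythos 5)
Your proposal is correct and follows essentially the same route as the paper: both identify $P$ with the composite of $C_{-+}(\alpha;1_x)\coprod C_{++}(\beta^{-1};1_x)$, then $D_{--+}(\alpha,\beta;\rho,\delta)\coprod C_{-+}(\beta^{-1};1_x)$, then $C_{--}(\beta;1_x)$, and invoke functoriality and monoidality of $\Zz$ to read off $\eta_\beta(\Zz(D)\ot L_{\beta^{-1}})(L_\alpha\ot\coev_{\beta^{-1}})=\Trace_\beta(\Zz(D))$. The only difference is one of exposition: the paper simply asserts the $(X,Y)$-homeomorphism (with a picture), whereas you justify it via the snake-move straightening of the coev--cylinder--ev tube and the bookkeeping of characteristic maps, which is a welcome filling-in of detail rather than a different argument.
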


\begin{proof}
The $(X,Y)$-surface is $(X,Y)$-homeomorphic to gluing $D_{--+}(\alpha,\beta;\rho,\delta)\coprod C_{-+}(\beta^{-1};1_x)$ to $C_{-+}(\alpha;1_x)\coprod C_{--}(\beta;1_x)) $ and $C_{--}(\beta;1_x)$. 
\[
\begin{tikzpicture}[tqft/cobordism/.style={draw}]
\pic[tqft/cap, name=d];
\pic[tqft/pair of pants, name=b, anchor=icoming boundary 1, at=(d-outgoing boundary 1)];
\pic[tqft/pair of pants, fill, name=a, anchor=incoming boundary, at=(b-outgoing boundary 1)];
\pic[tqft/cylinder to next,anchor=incoming boundary, at=(b-outgoing boundary 2)];
\pic[tqft/reverse pair of pants,name=r, anchor=incoming boundary 1, at=(a-outgoing boundary 2)];
\pic[tqft/cup, anchor=incoming boundary 1, at=(r-outgoing boundary 1)];
\end{tikzpicture}
\]
This yields
$$\Zz(P)=\eta_{\beta} (\Zz(D_{--+}(\alpha,\beta;\rho,\delta))\ot L_{\beta^{-1}})(L_\alpha \ot \coev_{\beta^{-1}}),$$
hence it is the partial trace as define in \ref{trace}.
\end{proof}
\begin{lemma}
The loop Frobenius $(\Gg,\Vv)$-category $(L,m,j,\Delta,\nu)$ is crossed via 
$\Zz(C_{-+}(\alpha;\beta))=\phi^\alpha_\beta.$
\end{lemma}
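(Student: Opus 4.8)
The plan is to verify the three families of axioms (LF1), (LF2), (LF3) by exactly the mechanism used in the preceding lemmas: since $\Zz$ is a symmetric monoidal functor, two composites of the elementary morphisms $m,j,\Delta,\nu,\eta,\coev$ and $\phi^\alpha_\beta=\Zz(C_{-+}(\alpha;\beta))$ agree in $\Vv$ as soon as the two corresponding gluings of elementary $(X,Y)$-surfaces are $(X,Y)$-homeomorphic. Thus every algebraic identity is reduced to exhibiting a homeomorphism of surfaces, and essentially no computation inside $\Vv$ is needed beyond unwinding definitions. I would begin with (LF2). The relation $\phi^\alpha_{1_x}=L_\alpha$ is already recorded, since $C_{-+}(\alpha;1_x)$ is the identity cylinder. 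For $\phi^{\beta\alpha\beta^{-1}}_{\gamma}\phi^{\alpha}_{\beta}=\phi^\alpha_{\gamma\beta}$ I glue $C_{-+}(\alpha;\beta)$ to $C_{-+}(\beta\alpha\beta^{-1};\gamma)$ along the circle labelled $\beta\alpha\beta^{-1}$; the connecting arcs $\beta^{-1}$ and $\gamma^{-1}$ concatenate to $(\gamma\beta)^{-1}$, so the glued annulus is $(X,Y)$-homeomorphic to $C_{-+}(\alpha;\gamma\beta)$, and functoriality gives the claim.

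Next I would treat (LF1). For the multiplicativity $\phi^{\alpha\alpha'}_\beta m_{\alpha,\alpha'}=m_{\beta\alpha\beta^{-1},\beta\alpha'\beta^{-1}}(\phi^\alpha_\beta\ot\phi^{\alpha'}_\beta)$ I compare two decompositions of one disc-with-two-holes carrying a conjugating collar: on one side the pair of pants $D_{--+}(\alpha,\alpha';1_x,1_x)$ glued to the annulus $C_{-+}(\alpha\alpha';\beta)$, and on the other the two annuli $C_{-+}(\alpha;\beta)\coprod C_{-+}(\alpha';\beta)$ glued to $D_{--+}(\beta\alpha\beta^{-1},\beta\alpha'\beta^{-1};1_x,1_x)$. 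Sliding the connecting path $\beta$ across the pair of pants is an $(X,Y)$-homeomorphism between the two, which settles the identity; the comultiplicative relation is dual, obtained by reversing all orientations (turning the $D_{--+}$ into $D_{++-}$). The unit and counit relations $\phi^{1_x}_\beta j_x=j_y$ and $\nu_y\phi^{1_x}_\beta=\nu_x$ follow by capping: gluing $B_+(1_x)$ to $C_{-+}(1_x;\beta)$ yields a disc whose boundary is the constant loop $1_y$, i.e.\ $B_+(1_y)$, and dually for the cup.

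For (LF3) the two easy relations come out quickly. The identity $\phi^\alpha_\alpha=L_\alpha$ holds because $C_{-+}(\alpha;\alpha)$ and $C_{-+}(\alpha;1_x)$ differ by a Dehn twist along the core of the annulus, which changes the connecting arc exactly by the boundary loop $\alpha$ while restricting to the identity on both boundary circles; this twist is an $(X,Y)$-homeomorphism, so the two annuli have equal image under $\Zz$. The first crossed relation $m_{\beta,\alpha}\sigma=m_{\beta\alpha\beta^{-1},\beta}(\phi^\alpha_\beta\ot L_\beta)$ is the heart of the word ``crossed'': the symmetry $\sigma$ of $\Cob^{X,Y}_2$ is realised by the half-twist interchanging the two input holes of the multiplication surface, and pushing that half-twist onto the single input labelled $\alpha$ converts it precisely into the conjugating collar defining $\phi^\alpha_\beta$, so the two surfaces are again $(X,Y)$-homeomorphic.

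The hard part will be the final crossed condition, the equality of the two partial traces. Both sides are morphisms $L_{\alpha\beta\alpha^{-1}\beta^{-1}}\to I$, and each realises $\Zz$ of a once-punctured torus whose boundary carries the commutator label $\alpha\beta\alpha^{-1}\beta^{-1}$: one forms a pair of pants, attaches the crossing annulus $\phi^\alpha_\beta$ (resp.\ $\phi^{\alpha\beta\alpha^{-1}}_{\alpha^{-1}}$) to one leg, and then glues two boundary circles together, which by the lemma identifying boundary self-gluing with $\Trace$ is exactly the partial trace in the statement. The left-hand side closes the resulting handle along the curve labelled $\alpha$, the right-hand side along the curve labelled $\beta$; these two handle decompositions of the one-holed torus are interchanged by a self-homeomorphism of the torus realising the symmetry between its two generating curves. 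The delicate point, where essentially all the work lies, is to check that this self-homeomorphism can be chosen to respect the base point and the chosen connecting arcs and to restrict to the identity on the boundary loop, so that it is a genuine $(X,Y)$-homeomorphism; granting the careful bookkeeping of orientations, connecting paths and conjugations, functoriality of $\Zz$ then yields the asserted equality, in direct analogy with the most subtle axiom in Turaev's classification.
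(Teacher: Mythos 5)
Your proposal is correct and follows essentially the same route as the paper: the unit/multiplication relations via the two decompositions of the collared disc-with-two-holes, composition of crossings via gluing annuli, $\phi^\alpha_\alpha=L_\alpha$ via the Dehn twist, the crossed relation via the homeomorphism swapping the inner holes of the pair of pants (which drags one connecting arc to $\beta^{-1}$), and the trace condition via the two decompositions of the once-punctured torus combined with the boundary-gluing/partial-trace lemma. The only difference is presentational: the paper first derives $m_{\alpha,\beta}=m_{\beta,\beta^{-1}\alpha\beta}(L_\beta\ot\phi^\alpha_{\beta^{-1}})\sigma$ from the swap and then substitutes, whereas you assert the homeomorphism directly in the form of the axiom, which amounts to the same computation.
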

\begin{proof}
We turn our attention to $C_{-+}(\alpha;\beta)$ where $\alpha:x\to x$ is a loop and $\beta:y\to x $ a path in $X$ with $x,y\in Y$. The family $\phi^\alpha_\beta$ satisfies all axioms for a crossing of the loop Frobenius $(\Gg,\Vv)$-category $(L_\alpha)_{\alpha_\in \Gg_0}$.\\
We begin by showing compatibility with the $(\Gg_0,\Vv)$-category and $(\Gg_0,\Vv)$-opcategory structure:
The $X$-disc $B_{+}(1_y)$ is mapped to $j_y:  I\to L_{1_y}$ by $\Zz$. It can also be obtained by gluing $B_{+}(1_x)$ to $C_{-+}(1_x;\beta)$. Similarly $B_{-}(1_y)\simeq B_{-}(1_x)\coprod_{\partial B_{-}(1_x)} C_{+-}(1_x;\beta)$

\[
\begin{tikzpicture}
\draw[fill=gray!30,even odd rule]  (0,0) circle (2cm);
\draw (0,0) circle (1cm);
\draw[decorate,decoration={markings,mark=at position 6.3 cm with
{\arrowreversed[line width=1mm]{stealth}};}] (0,0) circle (2cm) node at (-2.7,0) {$1_y$};
\draw[decorate,decoration={markings,mark=at position 3.3 cm with
{\arrowreversed[line width=1mm]{stealth}};}] (0,0) circle (1cm) node at (-1.5,0) {$1_x$};
\draw (0,-2) node {$\bullet$}
(0,-1) node {$\bullet$};
\draw[decorate,decoration={ markings,mark=at position 0.6cm with
{\arrow[line width=1mm]{stealth}};}] (0,-2) --(0,-1)  ;
\draw (0,-2) --(0,-1) node at (-0.5,-1.5) {$\beta$} ;

\draw[fill=gray!30,even odd rule]  (6,0) circle (2cm);
\draw (6,0) circle (1cm);
\draw[decorate,decoration={markings,mark=at position 6.3 cm with
{\arrow[line width=1mm]{stealth}};}] (6,0) circle (2cm) node at (3.3,0) {$1_y$};
\draw[decorate,decoration={markings,mark=at position 3.3 cm with
{\arrow[line width=1mm]{stealth}};}] (6,0) circle (1cm) node at (4.5,0) {$1_x$};
\draw (6,-2) node {$\bullet$}
(6,-1) node {$\bullet$};
\draw[decorate,decoration={ markings,mark=at position 0.6cm with
{\arrow[line width=1mm]{stealth}};}] (6,-2) --(6,-1)  ;
\draw (6,-2) --(6,-1) node at (5.5,-1.5) {$\beta$} ;
\end{tikzpicture}
\]

This shows $\phi^{1_x}_\beta j_x= j_y$ and $\nu_x \phi^{1_y}_{\beta^{-1}}= \nu_y$, where we used that $C_{+-}(1_x;\beta)\simeq C_{-+}(1_y;\beta^{-1}).$ For (co)multiplication consider $D_{--+}(\alpha,\alpha';\beta,\beta)$ and $D_{++-}(\beta\alpha'\beta^{-1},\beta\alpha\beta^{-1};\beta^{-1},\beta^{-1})$ respectively. We can represent $D_{--+}(\alpha,\alpha';\beta,\beta)$ either as the gluing of $D_{--+}(\beta\alpha\beta^{-1},\beta\alpha'\beta^{-1};$ $1_y,1_y)$ and $C_{-+}(\alpha,\beta)\coprod C_{-+}(\alpha',\beta)$ along the inner circles or $D_{--+}(\alpha,\alpha';1_x,1_x)$ and $C_{-+}(\alpha\alpha';$ $\beta)$ along the outer circle, hence $\phi^{\alpha}_\beta m_{\alpha,\alpha'}=m_{\beta \alpha \beta^{-1}, \beta \alpha' \beta^{-1}} (\phi^{\alpha}_\beta \ot \phi^{\alpha'}_\beta).$

\[
\begin{tikzpicture}
\draw[fill=gray!30,even odd rule]  (0,0) circle (3cm) 
                                (-1.5,0) circle (0.5cm)
 					(1.5,0) circle (0.5cm);
\draw (-1.5,0) circle (1cm)
(1.5,0) circle (1cm);
\draw[decorate,decoration={markings,mark=at position 9.5 cm with
{\arrowreversed[line width=1mm]{stealth}};}] (0,0) circle (3cm) node at (-4,0) {$\beta\alpha\alpha'\beta^{-1}$};
\draw[decorate,decoration={ markings,mark=at position 1.5cm with
{\arrowreversed[line width=1mm]{stealth}};}] (-1.5,0) circle (0.5cm) node at (-1.5,0) {$\alpha$};
\draw[decorate,decoration={ markings,mark=at position 1.5cm with
{\arrowreversed[line width=1mm]{stealth}};}] (1.5,0) circle (0.5cm) node at (1.5,0) {$\alpha'$};
\draw[decorate,decoration={ markings,mark=at position 3cm with
{\arrowreversed[line width=1mm]{stealth}};}] (-1.5,0) circle (1cm) node at (-1.5,1.3) {$\beta\alpha\beta^{-1}$};
\draw[decorate,decoration={ markings,mark=at position 3cm with
{\arrowreversed[line width=1mm]{stealth}};}] (1.5,0) circle (1cm) node at (1.5,1.3) {$\beta\alpha'\beta^{-1}$};
\draw (0,-3) node{$\bullet$};
\draw (-1.5,-1) node{$\bullet$};
\draw (1.5,-1) node{$\bullet$};
\draw (-1.5,-0.5) node{$\bullet$};
\draw (1.5,-0.5) node{$\bullet$};
\draw[decorate,decoration={ markings,mark=at position 1cm with
{\arrowreversed[line width=1mm]{stealth}};}] (-1.5,-1) --(0,-3)  ;
\draw[decorate,decoration={ markings,mark=at position 1cm with
{\arrowreversed[line width=1mm]{stealth}};}] (1.5,-1) --(0,-3)  ;
\draw (0,-3) --(-1.5,-1) node at (-1,-2.5) {$1_y$} ;
\draw (0,-3) --(1.5,-1) node at (1,-2.5) {$1_y$} ;
\draw (-1.5,-0.5) --(-1.5,-1) node at (-1.3,-0.75) {$\beta$} ;
\draw (1.5,-0.5) --(1.5,-1) node at (1.7,-0.75) {$\beta$} ;

\draw[fill=gray!30,even odd rule]  (8,0) circle (3cm) 
                                (6.5,0) circle (0.5cm)
 					(9.5,0) circle (0.5cm);
\draw (8,0) circle (2.5cm);
\draw[decorate,decoration={markings,mark=at position 9.5 cm with
{\arrowreversed[line width=1mm]{stealth}};}] (8,0) circle (3cm) node at (4,0) {$\beta\alpha\alpha'\beta^{-1}$};
\draw[decorate,decoration={ markings,mark=at position 1.5cm with
{\arrowreversed[line width=1mm]{stealth}};}] (6.5,0) circle (0.5cm) node at (6.5,0) {$\alpha$};
\draw[decorate,decoration={ markings,mark=at position 1.5cm with
{\arrowreversed[line width=1mm]{stealth}};}] (9.5,0) circle (0.5cm) node at (9.5,0) {$\alpha'$};
\draw[decorate,decoration={ markings,mark=at position 2.5cm with
{\arrowreversed[line width=1mm]{stealth}};}] (8,0) circle (2.5cm) node at (8,2) {$\alpha\alpha'$};
\draw (8,-3) node{$\bullet$};
\draw (8,-2.5) node{$\bullet$};
\draw (6.5,-0.5) node{$\bullet$};
\draw (9.5,-0.5) node{$\bullet$};
\draw[decorate,decoration={ markings,mark=at position 1cm with
{\arrowreversed[line width=1mm]{stealth}};}] (6.5,-0.5) --(8,-2.5)  ;
\draw[decorate,decoration={ markings,mark=at position 1cm with
{\arrowreversed[line width=1mm]{stealth}};}] (9.5,-0.5) --(8,-2.5)  ;
\draw (8,-2.5) --(6.5,-0.5) node at (7,-1.7) {$1_x$} ;
\draw (8,-2.5) --(9.5,-0.5) node at (9.2,-1.7) {$1_x$} ;
\draw (8,-3) --(8,-2.5) node at (8.3,-2.75) {$\beta$} ;
\end{tikzpicture}
\]

Similarly, for $D_{++-}(\beta\alpha'\beta^{-1},\beta\alpha\beta^{-1};\beta^{-1},\beta^{-1})$ we have 
\begin{eqnarray*}
&&\left (C_{+-}(\beta\alpha\beta^{-1},\beta^{-1})\coprod C_{+-}(\beta\alpha'\beta^{-1},\beta^{-1})\right ) \coprod_{\partial C\coprod \partial C} D_{++-}(\alpha',\alpha;1_x,1_x)\\
&=&D_{++-}(\beta\alpha'\beta^{-1},\beta\alpha\beta^{-1},1_y,1_y)\coprod_{\partial C} C_{+-}(\beta\alpha\alpha'\beta^{-1},\beta^{-1}).
\end{eqnarray*}

\[
\begin{tikzpicture}
\draw[fill=gray!30,even odd rule]  (0,0) circle (3cm) 
                                (-1.5,0) circle (0.5cm)
 					(1.5,0) circle (0.5cm);
\draw (-1.5,0) circle (1.3cm)
(1.5,0) circle (1.3cm);
\draw[decorate,decoration={markings,mark=at position 9.5 cm with
{\arrow[line width=1mm]{stealth}};}] (0,0) circle (3cm) node at (-3.7,0.7) {$\alpha\alpha'$};
\draw[decorate,decoration={ markings,mark=at position 1.5cm with
{\arrow[line width=1mm]{stealth}};}] (-1.5,0) circle (0.5cm) node at (-1.3,0.7) {$\beta\alpha'\beta^{-1}$};
\draw[decorate,decoration={ markings,mark=at position 1.5cm with
{\arrow[line width=1mm]{stealth}};}] (1.5,0) circle (0.5cm) node at (1.7,0.7) {$\beta\alpha\beta^{-1}$};
\draw[decorate,decoration={ markings,mark=at position 3cm with
{\arrow[line width=1mm]{stealth}};}] (-1.5,0) circle (1.3cm) node at (-1.5,1.5) {$\alpha'$};
\draw[decorate,decoration={ markings,mark=at position 3cm with
{\arrow[line width=1mm]{stealth}};}] (1.5,0) circle (1.3cm) node at (1.5,1.5) {$\alpha$};
\draw (0,-3) node{$\bullet$};
\draw (-1.5,-1.3) node{$\bullet$};
\draw (1.5,-1.3) node{$\bullet$};
\draw (-1.5,-0.5) node{$\bullet$};
\draw (1.5,-0.5) node{$\bullet$};
\draw[decorate,decoration={ markings,mark=at position 1cm with
{\arrowreversed[line width=1mm]{stealth}};}] (-1.5,-1.3) --(0,-3)  ;
\draw[decorate,decoration={ markings,mark=at position 1cm with
{\arrowreversed[line width=1mm]{stealth}};}] (1.5,-1.3) --(0,-3)  ;
\draw (0,-3) --(-1.5,-1.3) node at (-1,-2.5) {$1_x$} ;
\draw (0,-3) --(1.5,-1.3) node at (1,-2.5) {$1_x$} ;
\draw (-1.5,-0.5) --(-1.5,-1.3) node at (-1.1,-0.75) {$\beta^{-1}$} ;
\draw (1.5,-0.5) --(1.5,-1.3) node at (1.9,-0.75) {$\beta^{-1}$} ;

\draw[fill=gray!30,even odd rule]  (8,0) circle (3cm) 
                                (6.5,0) circle (0.5cm)
 					(9.5,0) circle (0.5cm);
\draw (8,0) circle (2.5cm);
\draw[decorate,decoration={markings,mark=at position 9.5 cm with
{\arrow[line width=1mm]{stealth}};}] (8,0) circle (3cm) node at (3.7,0) {$\alpha\alpha'$};
\draw[decorate,decoration={ markings,mark=at position 1.5cm with
{\arrow[line width=1mm]{stealth}};}] (6.5,0) circle (0.5cm) node at (6.7,0.7) {$\beta\alpha'\beta^{-1}$};
\draw[decorate,decoration={ markings,mark=at position 1.5cm with
{\arrow[line width=1mm]{stealth}};}] (9.5,0) circle (0.5cm) node at (9.7,0.7) {$\beta\alpha\beta^{-1}$};
\draw[decorate,decoration={ markings,mark=at position 2.5cm with
{\arrow[line width=1mm]{stealth}};}] (8,0) circle (2.5cm) node at (8,2) {$\beta\alpha\alpha'\beta^{-1}$};
\draw (8,-3) node{$\bullet$};
\draw (8,-2.5) node{$\bullet$};
\draw (6.5,-0.5) node{$\bullet$};
\draw (9.5,-0.5) node{$\bullet$};
\draw[decorate,decoration={ markings,mark=at position 1cm with
{\arrowreversed[line width=1mm]{stealth}};}] (6.5,-0.5) --(8,-2.5)  ;
\draw[decorate,decoration={ markings,mark=at position 1cm with
{\arrowreversed[line width=1mm]{stealth}};}] (9.5,-0.5) --(8,-2.5)  ;
\draw (8,-2.5) --(6.5,-0.5) node at (7,-1.7) {$1_y$} ;
\draw (8,-2.5) --(9.5,-0.5) node at (9.2,-1.7) {$1_y$} ;
\draw (8,-3) --(8,-2.5) node at (8.4,-2.75) {$\beta^{-1}$} ;
\end{tikzpicture}
\]
 Since $C_{+-}(\beta\alpha\beta^{-1},\beta^{-1})\simeq C_{-+}(\alpha,\beta)$, this results in 
\begin{eqnarray*}
 (\phi^{\alpha}_\beta \ot \phi^{\alpha'}_\beta)  \Delta_{\alpha\alpha'} =\Delta_{\beta\alpha\beta^{-1},\beta\alpha'\beta^{-1}} \phi^{\alpha\alpha'}_\beta.
\end{eqnarray*}
We have $\phi_{1_x}^\alpha=\Zz(C_{-+}(\alpha;1_x))=L_\alpha$ and regarding the composition
\begin{eqnarray*}
\phi^\alpha_{\gamma\beta}&=&\Zz(C_{-+}(\alpha;\gamma\beta))= \Zz(C_{-+}(\beta\alpha\beta^{-1};\gamma))\Zz(C_{-+}(\alpha;\beta))\\
&=&\phi^{\beta\alpha\beta^{-1}}_{\gamma} \phi^{\alpha}_{\beta}.
\end{eqnarray*}
A Dehn twist about the circle $\mathbb{S} \times \{1/2\} \subset C_{-+}(\alpha;1_x)$ defines an $(X,Y)$-homeomorphism $C_{-+}(\alpha;1_x)\simeq C_{-+}(\alpha,\alpha)$ and since $\Zz$ is invariant under $(X,Y)$-homeomorphisms we have $\phi_{\alpha}^\alpha=L_\alpha$ for all $\alpha:x\to x$.\\
Now for $\alpha,\beta:x\to x$  look at $D_{--+}(\alpha,\beta;1_x,1_x)$, but $(X,Y)$-homeomorphically switch the positions of the inner circles:

\[
\begin{tikzpicture}
\draw[fill=gray!30,even odd rule]  (0,0) circle (3cm) 
                                (-1.5,0) circle (1cm)
 					(1.5,0) circle (1cm);
\draw[decorate,decoration={markings,mark=at position 9.5 cm with
{\arrowreversed[line width=1mm]{stealth}};}] (0,0) circle (3cm) node at (-4,0){$$};
\draw[decorate,decoration={ markings,mark=at position 3cm with
{\arrowreversed[line width=1mm]{stealth}};}] (-1.5,0) circle (1cm) node at (-2,0) {$\beta$};
\draw[decorate,decoration={ markings,mark=at position 3cm with
{\arrowreversed[line width=1mm]{stealth}};}] (1.5,0) circle (1cm) node at (1,0) {$\alpha$};
\draw (0,-3) node{$\bullet$};
\draw (-1.5,-1) node{$\bullet$};
\draw (1.5,-1) node{$\bullet$};
\draw[decorate,decoration={ markings,mark=at position 1cm with
{\arrowreversed[line width=1mm]{stealth}};}] (-1.5,-1) --(0,-3)  ;
\draw[decorate,decoration={ markings,mark=at position 1cm with
{\arrowreversed[line width=1mm]{stealth}};}] (1.5,-1) --(0,-3)  ;
\draw (0,-3) --(-1.5,-1) node at (-1,-2.5) {$1_x$} ;
\draw (0,-3) --(1.5,-1) node at (1,-2.5) {$\beta^{-1}$} ;
\draw
plot [smooth, tension=1] coordinates {(0,-3) (-2.75,0) (-1,1.5) (0,0) (1,-1.2) (1.5,-1)};
\draw (0,-3.5) node {$D_{--+}(\alpha,\beta;1_x,1_x)\simeq D_{--+}(\beta,\alpha;1_x,\beta^{-1})\sigma$};
\end{tikzpicture}
\]

This is a $(X,Y)$-homeomorphic to $D_{--+}(\beta,\alpha;1_x,\rho)\sigma$ for some $\rho:x\to x$ and since the composition $1_x\beta1_x^{-1}\rho=1_x$, the homotopy class of $\rho$ is the same as of $\beta^{-1}$. Hence 
\begin{eqnarray*}
m_{\alpha,\beta}&=&\Zz(D_{--+}(\alpha,\beta,1_x,1_x))=\Zz(D_{--+}(\beta,\alpha;1_x,\beta^{-1}))\sigma=\\
&=&\Zz(D_{--+}(\beta,\beta^{-1}\alpha\beta,1_x,1_x)) (\Zz(C_{-+}(\beta,1_x))\ot \Zz(C_{-+}(\alpha,\beta)))\sigma\\
&=& m_{\beta,\beta^{-1}\alpha\beta}(L_\beta \ot \phi^\alpha_{\beta^{-1}})\sigma.
\end{eqnarray*}

We apply this to $m_{\beta\alpha\beta^{-1},\beta}$ to get
\begin{eqnarray*} 
m_{\beta\alpha\beta^{-1}, \beta}(\phi_\beta^{\alpha}\ot L_\beta)&=&m_{\beta,\alpha}\sigma (\phi^{\beta\alpha\beta^{-1}}_{\beta^{-1}}\ot L_\beta)(\phi_\beta^{\alpha}\ot L_\beta)\\
&=&m_{\beta,\alpha}\sigma(\phi_{\beta^{-1}\beta}^{\alpha}\ot L_\beta)=m_{\beta,\alpha}\sigma,
\end{eqnarray*}
as in the form of the definition of a crossing on loop Frobenius $(\Gg,\Vv)$-categories.
We are only left to prove the crossed condition:
\begin{eqnarray*}
&\Trace_\alpha(m_{\alpha\beta(\beta\alpha)^{-1},\alpha}(L_{\alpha\beta\alpha^{-1}\beta^{-1}} \ot \phi_{\beta}^\alpha))=&\Trace_\beta(\phi_{\alpha^{-1}}^{\alpha\beta\alpha^{-1}}m_{\alpha\beta(\beta\alpha)^{-1},\beta}( L_{\alpha\beta\alpha^{-1}\beta^{-1}} \ot L_\beta)).
\end{eqnarray*}
Consider the punctured torus $P=\mathbb{S}^1\times \mathbb{S}^1 \setminus B$. Let $(s,s)$ be the basepoint of the circle and deform the hole, such that it does not intersect $\mathbb{S}^1\times \{s\}$ and $\{s\} \times \mathbb{S}^1$. Further let $g:P\to X$ with $g(s,s)=x\in Y$, then $g$ is fully determined by $\alpha,\beta: x\to x$ in $\Gg$ with $\alpha=g|_{\mathbb{S}\times \{s\}}$ and $\beta=g|_{\{s\} \times \mathbb{S}}$ and $\partial P$ is represented by the loop $\alpha\beta\alpha^{-1}\beta^{-1}$. 
\[
\begin{tikzpicture}
\draw[fill=gray!30, even odd rule]  (0,0) rectangle (5,5)
plot [smooth, tension=1] coordinates {(0,0) (1,2) (2.5,2.5) (2,1) (0,0)};
\draw[decorate,decoration={markings,mark=at position 2.5 cm with
{\arrow[line width=1mm]{stealth}};}] (0,0) --(5,0) node at (2.5,-0.5) {$\alpha$};
\draw[decorate,decoration={ markings,mark=at position 2.5cm with
{\arrow[line width=1mm]{stealth}};}] (0,0) --(0,5) node at (-0.5,2.5) {$\beta$};
\draw[decorate,decoration={markings,mark=at position 2.5 cm with
{\arrow[line width=1mm]{stealth}};}] (5,0) --(5,5) node at (5.5,2.5) {$\beta$};
\draw[decorate,decoration={markings,mark=at position 2.5 cm with
{\arrow[line width=1mm]{stealth}};}] (0,5) --(5,5) node at (2.5,5.5) {$\alpha$};
\draw (-0.5,-0.5) node {$(s,s)$};
\end{tikzpicture}
\]

Hence $\Zz(P): L_{\alpha\beta\alpha^{-1}\beta^{-1}}\to I$. On one hand $P$ can be constructed by taking $D_{--+}(\alpha\beta\alpha^{-1}\beta^{-1},$ $\alpha; 1_x,\beta)$ and gluing the two circles labeled by $\alpha$ together. Since
$$ \Zz(D_{--+}(\alpha\beta\alpha^{-1}\beta^{-1},\alpha; 1_x,\beta))=m_{\alpha\beta\alpha^{-1}\beta^{-1},\beta\alpha\beta^{-1}}(L_{\alpha\beta\alpha^{-1}\beta^{-1}} \ot \phi^\alpha_\beta): L_{\alpha\beta\alpha^{-1}\beta^{-1}}\ot L_\alpha \to L_\alpha, $$
 by the previous lemma
$$\Zz(P)=\Trace_\alpha(m_{\alpha\beta(\beta\alpha)^{-1},\alpha}(L_{\alpha\beta\alpha^{-1}\beta} \ot \phi_{\beta}^\alpha)).$$
On the other hand, $P$ can be obtained by taking $D_{--+}(\alpha\beta\alpha^{-1}\beta^{-1},\beta; \alpha^{-1}, \alpha^{-1})$ and gluing the two circles labeled by $\beta$ together. 
\begin{eqnarray*}
\Zz(D_{--+}(\alpha\beta\alpha^{-1}\beta^{-1},\beta; \alpha^{-1}, \alpha^{-1}))&=&m_{\beta\alpha^{-1}\beta^{-1}\alpha^{-1},\alpha\beta\alpha^{-1}}(\phi_{\alpha^{-1}}^{\alpha\beta\alpha^{-1}\beta^{-1}}\ot \phi_{\alpha^{-1}}^\beta)\\
&=&\phi_{\alpha^{-1}}^{\alpha\beta\alpha^{-1}}m_{\alpha\beta(\beta\alpha)^{-1},\beta} : L_{\alpha\beta\alpha^{-1}\beta^{-1}}\ot L_\beta \to L_\beta, 
\end{eqnarray*}
and therefore 
 $$\Zz(P)=\Trace_\beta (\phi_{\alpha^{-1}}^{\alpha\beta\alpha^{-1}}m_{\alpha\beta(\beta\alpha)^{-1},\beta}).$$
\end{proof}

\begin{lemma} \label{CLF}
The functor $F:\Qq_2(X,Y,\Vv)\to \Frob_{\circ}^{\Gg}(\Vv)$ assigning to every HQFT $\Zz:\Cob_{2}^{X,Y}\to \Vv$ its underlying crossed loop Frobenius $(\Gg,\Vv)$-category is an equivalence of categories. 
\end{lemma}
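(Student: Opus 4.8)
The plan is to show that the functor $F:\Qq_2(X,Y,\Vv)\to\Frob_{\circ}^{\Gg}(\Vv)$ is both essentially surjective and fully faithful. The preceding lemmas have already done the work of checking that $F$ is well defined on objects, i.e. that the families $(L_\alpha,m,j,\Delta,\nu,\phi)$ read off from an arbitrary HQFT $\Zz$ genuinely satisfy all the axioms of Definition~\ref{loopFrob}. It remains to invert this assignment and to match the morphisms on the two sides. Throughout I work in a skeleton of $\Cob_2^{X,Y}$, which is permissible by Proposition~\ref{funcateq2}.

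For essential surjectivity I would start from an abstract crossed loop Frobenius $(\Gg,\Vv)$-category $L$ and reconstruct an HQFT $\Zz_L$ realizing it. On objects set $\Zz_L(\coprod_p \alpha_p):=\bigotimes_p L_{\alpha_p}$, and on the elementary $(X,Y)$-surfaces $B_\pm(1_x)$, $C_{\epsilon,\mu}$, $D_{\epsilon,\mu,\nu}$ declare the values to be $j,\nu,\eta,\coev,\phi,m,\Delta$ exactly as in the discussion preceding the theorem, extending symmetric monoidally so that disjoint unions go to tensor products and the braiding goes to $\sigma$. The geometric input is that every $(X,Y)$-surface admits a handle (Morse) decomposition compatible with its characteristic map, hence can be written as a composite of disjoint unions of these elementary pieces and braidings; one then \emph{defines} $\Zz_L$ on an arbitrary surface to be the corresponding composite in $\Vv$. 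Here admissibility of $(X,Y)$ is what guarantees that every closed component can be cut open along a loop based at a point of $Y$, so that each connected piece carries a well-defined label in some $\Gg(x,x)$ and the decomposition lands among our generators.

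The heart of the argument, and the main obstacle, is \emph{well-definedness}: any two such decompositions of a fixed $(X,Y)$-surface must yield the same morphism. This reduces to checking that the elementary moves relating two handle decompositions are each absorbed by one axiom of Definition~\ref{loopFrob}. Concretely, the associativity and coassociativity moves are (FC1)/(FC2); the two ways of sliding a handle across a cut are the Frobenius conditions (FC3); re-choosing the connecting paths $\rho_i$ inside a pair of pants is controlled by the equivariance relations (LF1)--(LF2); the Dehn-twist ambiguity of a cylinder is exactly $\phi_\alpha^\alpha=L_\alpha$; and the two presentations of the punctured torus from the preceding lemma collapse precisely to the crossed condition (LF3). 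What must be invoked is therefore the groupoid-refined generators-and-relations presentation of $\Cob_2^{X,Y}$, the $(X,Y)$-analogue of Turaev's surface bookkeeping: the classification of surfaces-with-boundary carrying a characteristic map into the $1$-type $X$, together with a complete list of moves between their decompositions. Establishing that these moves are generated by the ones above is the technically heaviest point, and is where the labor of the earlier lemmas pays off.

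Finally, for full faithfulness, a morphism $\rho:\Zz\Rightarrow\Zz'$ in $\Qq_2(X,Y,\Vv)$ is a monoidal natural transformation, hence determined by its components $\rho_\alpha:=\rho_{L_\alpha}$ on the connected objects. Naturality of $\rho$ against the generating surfaces $m,j,\Delta,\nu$ and $\phi$ translates termwise into commutativity with composition, unit, cocomposition, counit and crossing, which is exactly the defining data of a crossed loop Frobenius $(\Gg,\Vv)$-functor; conversely, any such functor assembles monoidally (using the decomposition above) into a natural transformation, so $F$ is a bijection on hom-sets. Since $\Cob_2^{X,Y}$ is rigid by Proposition~\ref{rig}, Proposition~\ref{natrig} forces every such $\rho$ to be an isomorphism, consistently with both categories being groupoids. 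Together with essential surjectivity this exhibits $F$ as an equivalence of categories.
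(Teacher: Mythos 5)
Your outline matches the paper's proof in structure: full faithfulness is argued exactly as in the paper (naturality against the generating surfaces is the same data as a crossed loop Frobenius functor, with rigidity forcing isomorphisms), and essential surjectivity is approached the same way, by defining $\Zz_L$ on the elementary pieces $B_\pm$, $C_{\epsilon,\mu}$, $D_{\epsilon,\mu,\nu}$ and extending along a decomposition of an arbitrary $(X,Y)$-surface. However, there is a genuine gap in how you dispose of well-definedness, which is the actual content of the paper's proof. You assert that the moves relating two decompositions are ``each absorbed by one axiom'' of Definition~\ref{loopFrob}, and that establishing the list of moves ``is where the labor of the earlier lemmas pays off.'' Both claims are off. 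The earlier lemmas only treat the forward direction (an HQFT induces a crossed loop Frobenius category); they say nothing about presentations of $\Cob_2^{X,Y}$ or about the reconstruction, so they cannot be cited for this step. And the correspondence between moves and axioms is not one-to-one: in the paper, even defining $\Zz$ on $C_{--}(\alpha;\beta)$ and $C_{++}(\alpha;\beta)$ requires proving that two candidate expressions agree (a computation combining (LF1), (LF2) and the Frobenius structure); invariance of $D_{---}(\alpha,\beta;\rho,\delta)$ under the reflection is a page-long derivation mixing the crossed conditions, symmetry of $\eta$, and compatibility of $m$ with $\sigma$; and consistency of gluing cylinders onto two-holed discs splits into sign cases each needing its own chain of identities. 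None of this is a formal consequence of ``one axiom per move.''

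Moreover, the ``groupoid-refined generators-and-relations presentation of $\Cob_2^{X,Y}$'' that your argument rests on is precisely what is not available off the shelf and what you would have to supply. The paper avoids claiming such a presentation: it instead checks invariance under the generators of the mapping class group (boundary-parallel Dehn twists and the reflection), checks compatibility with all gluings of elementary pieces case by case, and then invokes Turaev's argument (Theorem~3.1 in the cited book) for independence of the splitting system of loops on an arbitrary surface; admissibility of $(X,Y)$ enters to ensure cutting circles can be based at points mapped into $Y$. So your proposal identifies the correct skeleton but omits, or misattributes to earlier results, the verification work that makes the reconstruction well defined; as written it does not constitute a proof of essential surjectivity.
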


\begin{proof}
Firstly the assignement is functorial. Let $\rho:\Zz\Rightarrow\Zz'$ be a morphism of HQFT, hence $\rho$ is a monoidal natural isomorphism. Explicitely, $\rho$ is given by a family of maps $\rho_{\alpha}: L_\alpha\to L'_\alpha$, given by $\Zz(\mathbb{S},\alpha)=:L_\alpha$, $\Zz'(\mathbb{S},\alpha)=:L'_\alpha$, satisfying:
\[
\xymatrix{
\bigotimes_{i=1}^n L_{\alpha_i} \ar[rr]^{\bigotimes_{i=1}^m \rho_{\alpha_i}} \ar[d]_{\Zz(W,g)}&  &\bigotimes_{i=1}^n L'_\alpha \ar[d]^{\Zz'(W,g)}\\
\bigotimes_{j=1}^m L_{\beta_j}  \ar[rr]_{\bigotimes_{j=1}^m \rho_{\beta_j}}& & \bigotimes_{j=1}^m L_{\beta_j},
}
\]
for all $(W,g)$ cobordant to $\coprod_{i=1}^n (\mathbb{S},\alpha_i)$  and $\coprod_{j=1}^n (\mathbb{S},\beta_j)$. Particularly, that means $\rho$ is a crossed loop Frobenius functor. Further the functor $F$ is faithful. To show that $F$ is full, we show that any morphism $\Zz(W,g)$ can be represented by a composition of the structural morphisms $(m,j,\Delta,\nu,\phi)$ (and $\eta,\coev$ respectively) in $L$.\\
One shows, that it is essentially surjective, by constructing an HQFT given an arbitrary crossed loop Frobenius $(\Gg,\Vv)$-category. We use the fact that any $(X,Y)$-surface can be obtained by gluing discs with $\leq 2$ holes and by connectivity condition we can assume that such cutting along circles always go through some point sent to $Y$. Define $\Zz$ on $B,C,D$ and show that this assignement is independent of the gluing, hence obtaining a full $(X,Y)$-HQFT. \\
For the disc with no holes we have $\Zz(B_{-}(1_x)):=\nu_x$ and $\Zz(B_{+}(1_x)):=j_x$.\\
$(X,Y)$-discs with one hole are $(X,Y)$-homeomorphic to one of the four $C_{--}(\alpha;\beta)$, $C_{-+}(\alpha;\beta)$, $C_{+-}(\alpha;\beta)\simeq C_{-+}(\beta\alpha\beta^{-1};\beta^{-1})$ or $C_{++}(\alpha;\beta)$. 
For $\alpha\in \Gg(x,x)$ the $(X,Y)$-annulus $C_{--}(\alpha; 1_x)$ is an $(X,Y)$-cobordisms between $(C^0_{-},\alpha)\coprod (C^1_{-},\alpha^{-1})$ and $\emptyset_1$. Therefore 
$$\Zz(C_{--}(\alpha;1_x)):=\eta_\alpha: L_\alpha\ot L_{\alpha^{-1}}\to I$$
and similarly, 
$$\Zz(C_{++}(\alpha;1_x)):=\coev_\alpha: I\to L_{\alpha^{-1}}\ot L_\alpha.$$
Let
$$\Zz(C_{-+}(\alpha;\beta)):=\phi_\beta^\alpha$$
and since $C_{--}(\alpha;\beta)$ and $C_{++}(\alpha;\beta)$ can be optained by gluing $C_{-+}(\alpha^{-1};\beta)$ to $C_{--}(\alpha;1_x)$ and $C_{++}(\alpha;1_x)$ define
\begin{eqnarray*}\Zz(C_{--}(\alpha;\beta))&:=&\eta_{\alpha} (L_\alpha \ot \phi^{\beta\alpha^{-1}\beta^{-1}}_{\beta^{-1}}) =\eta_{\beta\alpha\beta^{-1}} (\phi_{\beta}^\alpha \ot L_{\beta\alpha^{-1}\beta^{-1}})\\
\Zz(C_{++}(\alpha;\beta))&:=&(\phi^{\alpha^{-1}}_\beta \ot L_\alpha) \coev_\alpha= (L_{\beta\alpha^{-1}\beta^{-1}}\ot \phi^{\beta\alpha\beta^{-1}}_{\beta^{-1}}) \coev_{\beta\alpha\beta^{-1}}.
\end{eqnarray*}

\[
\begin{tikzpicture}
\draw[fill=gray!30,even odd rule]  (0,0) circle (3cm) 
                                (0,0) circle (1cm);
\draw (0,0) circle (2cm);
\draw[decorate,decoration={markings,mark=at position 6.3 cm with
{\arrow[line width=1mm]{stealth}};}] (0,0) circle (2cm) node at (-2.5,0) {$\alpha^{-1}$};
\draw[decorate,decoration={markings,mark=at position 9.3 cm with
{\arrow[line width=1mm]{stealth}};}] (0,0) circle (3cm) node at (-4,0) {$\beta\alpha^{-1}\beta^{-1}$};
\draw[decorate,decoration={ markings,mark=at position 3cm with
{\arrowreversed[line width=1mm]{stealth}};}] (0,0) circle (1cm) node at (-1.5,0) {$\alpha$};
\draw (0,-2) node{$\bullet$};
\draw (0,-1) node{$\bullet$};
\draw[decorate,decoration={ markings,mark=at position 0.6cm with
{\arrow[line width=1mm]{stealth}};}] (0,-2) --(0,-1)  ;
\draw[decorate,decoration={ markings,mark=at position 0.6cm with
{\arrow[line width=1mm]{stealth}};}] (0,-3) --(0,-2)  ;
\draw (0,-2) --(0,-1) node at (-0.5,-1.5) {$1_x$} ;
\draw (0,-3) --(0,-2) node at (-0.5,-2.5) {$\beta$} ;
\draw (0,-3.5) node {$C_{--}(\alpha;\beta)$};

\draw[fill=gray!30,even odd rule]  (8,0) circle (3cm) 
                                (8,0) circle (1cm);
\draw (8,0) circle (2cm);
\draw[decorate,decoration={markings,mark=at position 9.5 cm with
{\arrowreversed[line width=1mm]{stealth}};}] (8,0) circle (3cm) node at (4,0) {$\beta\alpha^{-1}\beta^{-1}$};
\draw[decorate,decoration={markings,mark=at position 6.5 cm with
{\arrowreversed[line width=1mm]{stealth}};}] (8,0) circle (2cm) node at (5.5,0) {$\alpha^{-1}$};
\draw[decorate,decoration={ markings,mark=at position 3cm with
{\arrow[line width=1mm]{stealth}};}] (8,0) circle (1cm) node at (6.5,0) {$\alpha$};
\draw (8,-2) node{$\bullet$};
\draw (8,-1) node{$\bullet$};
\draw[decorate,decoration={ markings,mark=at position 0.6cm with
{\arrow[line width=1mm]{stealth}};}] (8,-2) --(8,-1)  ;
\draw[decorate,decoration={ markings,mark=at position 0.6cm with
{\arrow[line width=1mm]{stealth}};}] (8,-3) --(8,-2)  ;
\draw (8,-2) --(8,-1) node at (7.5,-1.5) {$1_x$} ;
\draw (8,-3) --(8,-2) node at (7.5,-2.5) {$\beta$} ;
\draw (8,-3.5)  node {$C_{++}(\alpha;\beta)$};
\end{tikzpicture}
\]

The equality of the two expressions follows from the axioms of a crossed loop Frobenius $(\Gg,\Vv)$-category
\begin{eqnarray*}
\eta_{\beta\alpha\beta^{-1}} (\phi_{\beta}^\alpha \ot L_{\beta\alpha^{-1}\beta^{-1}})&=&\nu_y m_{\beta\alpha\beta^{-1},\beta\alpha^{-1}\beta^{-1}} (\phi_{\beta}^\alpha \ot L_{\beta\alpha^{-1}\beta^{-1}})\\
&=&\nu_y \phi^{1_x}_\beta m_{\alpha,\alpha^{-1}} (\phi_{\beta^{-1}}^{\beta\alpha\beta^{-1}} \ot \phi_{\beta^{-1}}^{\beta\alpha^{-1}\beta^{-1}} )(\phi_{\beta}^\alpha \ot L_{\beta\alpha^{-1}\beta^{-1}})\\
&=&\nu_x m_{\alpha,\alpha^{-1}} (\phi_{\beta^{-1}\beta}^{\alpha}\ot \phi_{\beta^{-1}}^{\beta\alpha^{-1}\beta^{-1}} )\\
&=&\eta_{\alpha} (L_\alpha \ot \phi^{\beta\alpha^{-1}\beta^{-1}}_{\beta^{-1}}) 
\end{eqnarray*}
and dually using comultiplication and coevaluation. \\
Every $(X,Y)$-disc with two holes $D_{\epsilon,\mu,\nu}$ can be brought to either one of the 4 forms: $D_{---}(\alpha,\beta;$ $\rho,\delta)$, 
$D_{--+}(\alpha,\beta;\rho,\delta)$, $D_{++-}(\alpha,\beta;\rho,\delta)$ or $D_{+++}(\alpha,\beta;\rho,\delta)$ via
the $(X,Y)$-homeomorphisms
$$D_{\epsilon,\mu,\nu} (\alpha,\beta;\rho,\delta)\simeq D_{\mu,\nu,\epsilon} (\beta,\gamma;\rho^{-1}\delta, \rho^{-1})\simeq D_{\nu,\epsilon,\mu,} (\gamma,\alpha; \delta^{-1},\delta^{-1}\rho)$$
with $\gamma=(\rho\alpha^{-\epsilon}\rho^{-1}\delta\beta^{-\mu}\delta^{-1})^{\nu}$.
We have 
\begin{eqnarray*}
\Zz(D_{---}(\alpha,\beta;\rho,\delta))&=&\eta_{\gamma^{-1}}(m_{\rho\alpha\rho^{-1},\delta\beta\delta^{-1}}\ot L_\gamma)(\phi^\alpha_\rho \ot \phi^\beta_\delta \ot L_\gamma),\\
\Zz(D_{--+}(\alpha,\beta;\rho,\delta))&=&m_{\rho\alpha\rho^{-1},\delta\beta\delta^{-1}}(\phi^\alpha_\rho \ot \phi^\beta_\delta),\\
\Zz(D_{++-}(\alpha,\beta;\rho,\delta))&=&(\phi^{\delta\beta\delta^{-1}}_{\delta^{-1}}\ot \phi^{\rho\alpha\rho^{-1}}_{\rho^{-1}}) \Delta_{\delta\beta\delta^{-1},\rho\alpha\rho^{-1}},\\
\Zz(D_{+++}(\alpha,\beta;\rho,\delta))&=&(\phi^{\delta\beta\delta^{-1}}_{\delta^{-1}}\ot \phi^{\rho\alpha\rho^{-1}}_{\rho^{-1}}\ot L_\gamma) (\Delta_{\delta\beta\delta^{-1},\rho\alpha\rho^{-1}} \ot L_\gamma) \coev_{\gamma}.\\ 
\end{eqnarray*}
This can be seen by gluing together $D_{--+}(\alpha,\beta;1_x,1_x)$ and $D_{++-}(\alpha,\beta;1_x,1_x)$ with cylinders $C_{\epsilon,\mu}$ with corresponding signs. Since every $(X,Y)$-surface can be disected into $(X,Y)$-discs with $\leq 2$ holes, this shows that any morphism in the image of $\Zz$ can be obtained by composition of structural morphisms and the functor $F$ is fully faithful. \\
We show that this assignement is topologically invariant and since the mapping class group is generated by the Dehn twists along boundary parallel circles and the reflection, it suffices to show invariance under these two operations. 
The Dehn twist is an $(X,Y)$-homeomorphism $C_{\epsilon,\mu}(\alpha;\beta) \to C_{\epsilon,\mu}(\alpha;\beta\alpha)$ hence both cylinders must be mapped to the same map:
\begin{equation*}
\Zz(C_{-+}(\alpha;\alpha\beta))=\phi^\alpha_{\beta\alpha}=\phi^\alpha_\beta \phi^\alpha_\alpha=\phi^\alpha_\beta=\Zz(C_{-+}(\alpha;\beta))
\end{equation*}
and since the rest of the cylinders are defined using $C_{-+}(\alpha;\beta)$ this holds for all $\epsilon,\mu=\pm$.
The reflection is an $(X,Y)$-homeomorphism $C_{\epsilon,\epsilon}(\alpha;\beta)\to C_{\epsilon,\epsilon}(\beta\alpha^{-1}\beta^{-1};\beta^{-1})$ given by a permutation of the boundary components and the product of the orientation reversing involutions $\mathbb{S}\to \mathbb{S}, s\mapsto -\bar s$ and $[0,1]\to [0,1], t\mapsto 1-t$.  
We get
\begin{eqnarray*}
\Zz(C_{--}(\beta\alpha^{-1}\beta^{-1};\beta^{-1}))\sigma&=&\eta_{\beta\alpha^{-1}\beta^{-1}} (L_{\beta\alpha^{-1}\beta^{-1}}\ot \phi^{\alpha}_{\beta})\sigma\\
&=&\eta_{\beta\alpha^{-1}\beta^{-1}} \sigma (\phi^{\alpha}_{\beta}\ot L_{\beta\alpha^{-1}\beta^{-1}})\\
&=&\eta_{\beta\alpha\beta^{-1}}(\phi^{\alpha}_{\beta}\ot L_{\beta\alpha^{-1}\beta^{-1}})= \Zz(C_{--}(\alpha;\beta)),
\end{eqnarray*}
which has been shown above and equivalently for $C_{++}(\beta\alpha^{-1}\beta^{-1},\beta^{-1}).$
The invariance of the $(X,Y)$-disc with two holes under the Dehn twists follows form  the fact that we defined them by gluing $C_{-+}(\alpha;\rho)$ and $C_{-+}(\beta;\delta)$ at the inner circles, which satisfy the invariance. For the Dehn twist along the outer circle observe that multiplication and comultiplication preserve the action of $\phi$ and $\eta, \coev$ are invariant under it. Hence, we can reduce this to invariance of cylinders. 
The reflection of $D_{---}(\alpha,\beta;\rho,\delta)$ is $D_{---}(\beta,\gamma; \rho^{-1}\delta, \rho^{-1}\rho)$ and using $\rho^{-1}\delta\beta\delta^{-1}\rho\rho^{-1}\gamma\rho=\rho^{-1}\delta\beta\delta^{-1}(\delta \beta^{-1} \delta^{-1}\rho\alpha^{-1}\rho^{-1})\rho=\alpha^{-1}$ we deduce:

\begin{eqnarray*}
& & D_{---} (\beta,\gamma;\rho^{-1}\delta, \rho^{-1}) \sigma_{(132)}\\
&=&\eta_{\alpha^{-1}}(m_{\rho^{-1}\delta\beta\delta^{-1}\rho,\rho^{-1}\gamma\rho} \ot L_\alpha)(\phi^\beta_{\rho^{-1}\delta} \ot \phi^{\gamma}_{\rho^{-1}} \ot L_\alpha)\sigma_{(132)}\\
&=&\eta_{\alpha^{-1}}(m_{\alpha^{-1}\rho^{-1}\gamma^{-1}\rho,\rho^{-1}\gamma\rho} \ot L_\alpha)(\phi^\beta_{\rho^{-1}\delta} \ot \phi^{\gamma}_{\rho^{-1}} \ot L_\alpha)\sigma_{(132)}\\
&=&\eta_{\alpha^{-1}\rho^{-1}\gamma^{-1}\rho}(L_{\alpha^{-1}\rho^{-1}\gamma^{-1}\rho} \ot m_{\rho^{-1}\gamma\rho,\alpha})\sigma_{(132)}(L_\alpha \ot \phi^\beta_{\rho^{-1}\delta} \ot \phi^{\gamma}_{\rho^{-1}})\\
&=&\eta_{\alpha^{-1}\rho^{-1}\gamma^{-1}\rho}(L_{\alpha^{-1}\rho^{-1}\gamma^{-1}\rho} \ot m_{\rho^{-1}\gamma\rho\alpha\rho^{-1}\gamma^{-1}\rho,\rho^{-1}\gamma\rho})(L_{\alpha^{-1}\rho\gamma^{-1}\rho} \ot \phi^\alpha_{\rho^{-1}\gamma\rho} \ot L_{\rho^{-1}\gamma\rho})\\
& &(\sigma \ot L_{\rho^{-1}\gamma\rho})(L_\alpha \ot \phi^\beta_{\rho^{-1}\delta} \ot \phi^{\gamma}_{\rho^{-1}})\\
&=&\eta_{\rho^{-1}\gamma^{-1}\rho}(m_{\alpha^{-1}\rho^{-1}\gamma^{-1}\rho,\rho^{-1}\gamma\rho\alpha\rho^{-1}\gamma^{-1}\rho}\ot L_{\rho^{-1}\gamma\rho})(\sigma \ot L_{\rho^{-1}\gamma\rho})\\
& &( \phi^\alpha_{\rho^{-1}\gamma\rho}\ot \phi^\beta_{\rho^{-1}\delta} \ot \phi^{\gamma}_{\rho^{-1}})\\
&=&\eta_{\rho^{-1}\gamma^{-1}\rho}(m_{\alpha,\alpha^{-1}\rho^{-1}\gamma^{-1}\rho}\ot L_{\rho^{-1}\gamma\rho})(\phi^{\rho^{-1}\gamma\rho\alpha\rho^{-1}\gamma^{-1}\rho}_{\alpha^{-1}\rho^{-1}\gamma^{-1}\rho}\phi^\alpha_{\rho^{-1}\gamma\rho} \ot \phi^\beta_{\rho^{-1}\delta}\ot \phi^{\gamma}_{\rho^{-1}})\\
&=&\eta_{\gamma^{-1}}(\phi^{\rho^{-1}\gamma\rho}_{\rho} m_{\alpha,\alpha^{-1}\rho^{-1}\gamma^{-1}\rho}\ot L_{\gamma})( \phi^\alpha_{\alpha^{-1}} \ot \phi^\beta_{\rho^{-1}\delta} \ot L_\gamma)\\
&=&\eta_{\gamma^{-1}}(m_{\rho\alpha\rho^{-1},\rho\alpha^{-1}\rho^{-1}\gamma^{-1}}\ot L_{\gamma})( \phi^\alpha_\rho \ot \phi^{\alpha^{-1}\rho^{-1}\gamma^{-1}\rho}_{\rho}\phi^\beta_{\rho^{-1}\delta} \ot L_\gamma)\\
&=&\eta_{\gamma^{-1}}(m_{\rho\alpha\rho^{-1},\rho\alpha^{-1}\rho^{-1}\gamma^{-1}}\ot L_{\gamma})( \phi^\alpha_\rho \ot \phi^{\rho^{-1}\delta\beta\delta^{-1}\rho}_{\rho}\phi^\beta_{\rho^{-1}\delta} \ot L_\gamma)\\
&=&\eta_{\gamma^{-1}}(L_{\rho\alpha\rho^{-1}\delta\beta\delta^{-1}}\ot L_\gamma) (m_{\rho\alpha\rho^{-1},\delta\beta\delta^{-1}}\ot L_\gamma)(\phi^\alpha_\rho \ot \phi^\beta_\delta \ot L_\gamma)\\
&=& D_{---}(\alpha,\beta; \rho,\delta),
\end{eqnarray*}
where we denote by $\sigma_{(132)}=\sigma_{A,B\ot C}:A\ot B\ot C\to B\ot C\ot A$ for all object $A,B,C \in \Vv$.
Similarly, the invariance of  $D_{+++}(\alpha,\beta;\rho,\delta)$ under the reflexion is shown. \\
We finish by showing this assignement is invariant under gluing. The gluing of $C_{\epsilon,\mu}(\alpha;\beta)$ to $C_{-\mu,\nu}(\beta\alpha^{-\epsilon\mu}\beta^{-1};\delta)$ yields $C_{\epsilon,\nu}(\alpha,\delta\beta)$. Without loss of generality assume that $\mu=+$, since both cases are topologically equivalent.
\begin{itemize}
\item $\epsilon=-,\nu=+$: 
$$\Zz\left(C_{-,+}(\beta\alpha\beta^{-1};\delta)\coprod C_{-+}(\alpha;\beta)\right)=\phi^{\beta\alpha\beta^{-1}}_\delta \phi_\beta^\alpha=\phi^\alpha_{\delta\beta}=\Zz(C_{-,+}(\alpha;\delta\beta)).$$ 
\item $\epsilon=-,\nu=-$: 
\begin{eqnarray*}
\Zz\left(C_{-,-}(\beta\alpha\beta^{-1};\delta)\coprod C_{-+}(\alpha;\beta)\right)
&=&\eta_{\delta\beta\alpha\beta^{-1}\delta^{-1}}(\phi^{\beta\alpha\beta^{-1}}_\delta \ot L_{\delta\beta\alpha^{-1}\beta^{-1}\delta^{-1}})(\phi_\beta^\alpha \ot L_{\delta\beta\alpha^{-1}\beta^{-1}\delta^{-1}})\\
&=&\eta_{\delta\beta\alpha\beta^{-1}\delta^{-1}}(\phi_{\delta\beta}^\alpha \ot L_{\delta\beta\alpha^{-1}\beta^{-1}\delta^{-1}})\\
&=&\Zz(C_{-,-}(\alpha;\delta\beta)).
\end{eqnarray*}
\item $\epsilon=+,\nu=+$: 
\begin{eqnarray*}
\Zz\left(C_{-,+}(\beta\alpha^{-1}\beta^{-1};\delta)\coprod C_{++}(\alpha;\beta)\right)
&=&(\phi^{\beta\alpha^{-1}\beta^{-1}}_\delta  \ot L_\alpha) (\phi^{\alpha^{-1}}_\beta \ot L_\alpha) \coev_{\alpha}\\
&=&(\phi^{\alpha^{-1}}_{\delta\beta} \ot L_\alpha) \coev_{\alpha}\\
&=&\Zz(C_{+,+}(\alpha,\delta\beta)).
\end{eqnarray*}
\item $\epsilon=+,\nu=-$: 
\begin{eqnarray*}
& &\Zz\left(C_{-,-}(\beta\alpha^{-1}\beta^{-1};\delta)\coprod C_{++}(\alpha;\beta)\right)\\
&=&(\eta_{\beta\alpha^{-1}\beta^{-1}} \ot L_\alpha)(L_{\beta\alpha^{-1}\beta^{-1}} \ot \phi^{\delta\beta\alpha\beta^{-1}\delta^{-1}}_{\delta^{-1}} \ot L_\alpha) (\sigma \ot L_\alpha)\\
& &(L_{\delta\beta\alpha\beta^{-1}\delta^{-1}}\ot L_{\beta\alpha^{-1}\beta} \ot \phi^{\beta\alpha\beta^{-1}}_{\beta^{-1}}) ( L_{\delta\beta\alpha\beta^{-1}\delta^{-1}}\ot\coev_{\beta\alpha\beta^{-1}}) \\
&=&\phi^{\beta\alpha\beta^{-1}}_{\beta^{-1}}(\eta_{\beta\alpha\beta^{-1}} \ot L_{\beta\alpha\beta^{-1}})( L_{\beta\alpha\beta^{-1}}\ot\coev_{\beta\alpha\beta^{-1}}) \phi^{\delta\beta\alpha\beta^{-1}\delta^{-1}}_{\delta^{-1}}\\
&=&\phi^{\beta\alpha\beta^{-1}}_{\beta^{-1}}\phi^{\delta\beta\alpha\beta^{-1}\delta^{-1}}_{\delta^{-1}}=\phi_{(\delta\beta)^{-1}}^{\delta\beta\alpha\beta^{-1}\delta^{-1}}\\
&=&\Zz(C_{-,+}(\delta\beta\alpha\beta^{-1}\delta^{-1};(\delta\beta)^{-1}))\\
&=&\Zz(C_{+,-}(\alpha;\delta\beta)).
\end{eqnarray*}
\end{itemize}
To complete the proof we need gluings $D_{\epsilon,\mu,\nu}$ to $C_{\pi,\omega}$. If the cylinder is glued to any of the inner circles, the relation for composition of the crossings yields the equality. So let us only consider gluings along the outer circle. Further, if we consider $C_{-+}$ or $C_{+-}$ we can use compatibility of $m,\Delta$ with the crossing or invariance of $\eta,\coev$ under the crossing to prove the equality of the two sides. Hence we concentrate our efforts on the case, where $\pi=\omega$ 
\begin{itemize}
\item $\epsilon=\mu=\nu=-,\pi=\omega=+$:
\begin{eqnarray*}
& & \Zz\left(D_{-,-,-}(\alpha,\beta;\rho,\delta)\coprod C_{++}(\gamma;\xi)\right)=\\
&=&(\eta_{\gamma^{-1}}\ot L_{\xi\gamma^{-1}\xi^{-1}})(L_{\gamma^{-1}} \ot \sigma)(L_{\gamma^{-1}}\ot \phi^{\gamma^{-1}}_\xi \ot L_\gamma) (L_{\gamma^{-1}} \ot \coev_{\gamma})m_{\rho\alpha\rho^{-1},\delta\beta\delta^{-1}} (\phi^\alpha_\rho \ot \phi^\beta_\delta)\\
&=&\phi^{\gamma^{-1}}_\xi (\eta_{\gamma^{-1}}\ot L_\gamma)(L_{\gamma^{-1}} \ot \coev_{\gamma^{-1}})m_{\rho\alpha\rho^{-1},\delta\beta\delta^{-1}}(\phi^\alpha_\rho \ot \phi^\beta_\delta)\\
&=&\phi^{\gamma^{-1}}_\xi m_{\rho\alpha\rho^{-1},\delta\beta\delta^{-1}}(\phi^\alpha_\rho \ot \phi^\beta_\delta)\\
&=&m_{\xi\rho\alpha\rho^{-1}\xi^{-1},\xi\delta\beta\delta^{-1}\xi^{-1}}(\phi^\alpha_{\xi\rho} \ot \phi^\beta_{\xi\delta})\\
&=&\Zz(D_{-,-,+}(\alpha,\beta;\xi\rho,\xi\delta)).
\end{eqnarray*}
\item $\epsilon=\mu=-, \nu=+,\pi=\omega=-$:
\begin{eqnarray*}
& & \Zz\left(D_{-,-,+}(\alpha,\beta;\rho,\delta)\coprod C_{--}(\gamma;\xi)\right)=\\
&=&\eta_{\gamma} (L_\gamma \ot \phi^{\xi\gamma^{-1}\xi^{-1}}_{\xi^{-1}}) (m_{\rho\alpha\rho^{-1},\delta\beta\delta^{-1}} \ot L_{\xi\gamma^{-1}\xi^{-1}})(\phi^\alpha_\rho \ot \phi^\beta_\delta\ot L_{\xi\gamma^{-1}\xi^{-1}})\\
&=&\eta_{\xi\gamma\xi^{-1}} (\phi^\gamma_\xi \ot L_{\xi\gamma^{-1}\xi^{-1}}) (m_{\rho\alpha\rho^{-1},\delta\beta\delta^{-1}} \ot L_{\xi\gamma^{-1}\xi^{-1}})(\phi^\alpha_\rho \ot \phi^\beta_\delta\ot L_{\xi\gamma^{-1}\xi^{-1}})\\
&=&\eta_{\xi\gamma\xi^{-1}}  (m_{\xi\rho\alpha\rho^{-1}\xi^{-1},\xi\delta\beta\delta^{-1}\xi^{-1}} \ot L_{\xi\gamma^{-1}\xi^{-1}})(\phi^\alpha_{\xi\rho} \ot \phi^\beta_{\xi\delta} \ot L_{\xi\gamma^{-1}\xi^{-1}})\\
&=&\Zz(D_{-,-,-}(\alpha,\beta;\xi\rho,\xi\delta)).
\end{eqnarray*}
\item $\epsilon=\mu=+=\nu=+,\pi=\omega=-$:
\begin{eqnarray*}
& & \Zz\left(D_{+,+,+}(\alpha,\beta;\rho,\delta)\coprod C_{--}(\gamma;\xi)\right)=\\
&=&(L_\alpha\ot L_\beta \ot \eta_{\gamma}) (\phi^{\delta\beta\delta^{-1}}_{\delta^{-1}}\ot \phi^{\rho\alpha\rho^{-1}}_{\rho^{-1}}\ot L_\gamma\ot \phi^{\xi\gamma^{-1}\xi^{-1}}_{\xi^{-1}})\\
&&(\Delta_{\delta\beta\delta^{-1},\rho\alpha\rho^{-1}} \ot L_\gamma \ot L_{\xi\gamma^{-1}\xi^{-1}}) (\coev_{\gamma}\ot L_{\xi\gamma^{-1}\xi^{-1}})\\
&=&(\phi^{\delta\beta\delta^{-1}}_{\delta^{-1}}\ot \phi^{\rho\alpha\rho^{-1}}_{\rho^{-1}})\Delta_{\delta\beta\delta^{-1},\rho\alpha\rho^{-1}} (L_{\gamma^{-1}} \ot \eta_\gamma) (\coev_{\gamma}\ot L_{\gamma^{-1}}) \phi^{\xi\gamma^{-1}\xi^{-1}}_{\xi^{-1}}\\
&=&(\phi^{\delta\beta\delta^{-1}}_{\delta^{-1}}\ot \phi^{\rho\alpha\rho^{-1}}_{\rho^{-1}})\Delta_{\delta\beta\delta^{-1},\rho\alpha\rho^{-1}} \phi^{\xi\gamma^{-1}\xi^{-1}}_{\xi^{-1}}\\
&=&(\phi^{\xi\delta\beta\delta^{-1}\xi^{-1}}_{(\xi\delta)^{-1}}\ot \phi^{\xi\rho\alpha\rho^{-1}\xi^{-1}}_{(\xi\rho)^{-1}})\Delta_{\xi\delta\beta\delta^{-1}\xi^{-1},\xi\rho\alpha\rho^{-1}\xi^{-1}}\\
&=&\Zz(D_{+,+,-}(\alpha,\beta;\xi\rho,\xi\delta)).
\end{eqnarray*}
\item $\epsilon=\mu=+=\nu=-,\pi=\omega=+$:
\begin{eqnarray*}
& & \Zz\left(D_{+,+,-}(\alpha,\beta;\rho,\delta)\coprod C_{++}(\gamma;\xi)\right)=\\
&=&(L_{\xi\gamma^{-1}\xi^{-1}} \ot \phi^{\delta\beta\delta^{-1}}_{\delta^{-1}}\ot \phi^{\rho\alpha\rho^{-1}}_{\rho^{-1}})(L_{\xi\gamma^{-1}\xi}\ot \Delta_{\delta\beta\delta^{-1},\rho\alpha\rho^{-1}})(\phi^{\gamma^{-1}}_\xi  \ot L_\gamma)\coev_\gamma\\
&=&(L_{\xi\gamma^{-1}\xi^{-1}} \ot \phi^{\delta\beta\delta^{-1}}_{\delta^{-1}}\ot \phi^{\rho\alpha\rho^{-1}}_{\rho^{-1}})(L_{\xi\gamma^{-1}\xi}\ot \Delta_{\delta\beta\delta^{-1},\rho\alpha\rho^{-1}})(L_{\gamma^{-1}}\ot \phi^{\gamma}_\xi )\sigma \coev_{\xi\gamma^{-1}\xi^{-1}}\\
&=&(\phi^{\xi\delta\beta\delta^{-1}\xi^{-1}}_{(\xi\delta)^{-1}}\ot \phi^{\xi\rho\alpha\rho^{-1}\xi^{-1}}_{(\xi\rho)^{-1}}\ot L_{\xi\gamma\xi^{-1}}) (\Delta_{\xi\delta\beta\delta^{-1}\xi^{-1},\xi\rho\alpha\rho^{-1}\xi^{-1}} \ot L_{\xi\gamma\xi^{-1}}) \coev_{\xi\gamma\xi^{-1}}\\
&=&\Zz(D_{+,+,+}(\alpha,\beta;\xi\rho,\xi\delta)).
\end{eqnarray*}
\end{itemize}
For arbitrary $(X,Y)$-surfaces we conclude the proof analogously to the proof of Turaev \cite[Thm 3.1.]{Tu1} stating the independence of the splitting system of loops.
\end{proof}

\section{Outlook}
The structure of a crossed loop Frobenius $(\Gg,\Vv)$-category appears somehow incomplete, since it uses the full groupoid for the crossings, however only loops on the indices of the objects. An attempt to solve this asymmetry is to define a category of open/closed 2-dimensional $(X,Y)$-cobordisms, whose objects are given by open or closed strings together with appropriate homotopy classes of maps to $Y$. In order to define morphisms in this category, one needs to consider 2-manifolds with corners with additional data. Given such a symmetric monoidal category $\Cob_{2,oc}^{X,Y}$, one might define \textit{open/closed HQFTs} as symmetric monoidal functors $\Zz:\Cob_{2,oc}^{X,Y}\to \Vv$. For $X=Y=\{*\}$ this theory should reduce to open/closed TQFTs, as defined by Lauda and Pfeiffer in \cite{LP}, who classified them by knowledgeable Frobenius algebras. For general target pairs of spaces, one therefore can expect the resulting algebraic structure to be some sort of \textit{crossed knowledgable Frobenius $(\Gg,\Vv)$-category}. The study of such a theory of open/closed HQFTs will be subject to an upcoming paper.
\newpage

\end{document}